\algrenewcommand\algorithmicprocedure{\textbf{function}}
\theoremstyle{plain}
\newtheorem{theorem}{Theorem}
\newtheorem{lemma}[theorem]{Lemma}
\newtheorem{assumption}[theorem]{Assumption}
\newtheorem{prop}[theorem]{Proposition}
\newtheorem{cor}[theorem]{Corollary}
\theoremstyle{definition}
\theoremstyle{remark}
\newtheorem{remark}{Remark}
\newtheoremstyle{cited}%
  {3pt}
  {3pt}
  {\itshape}
  {}
  {\bfseries}
  {.}
  {.5em}
  {\thmname{#1} \thmnumber{#2} \thmnote{\normalfont#3}}
\theoremstyle{cited}
\newcommand{\bl}{{(l)}}
\newcommand{\blm}{{(l-1)}}
\definecolor{eggplant}{RGB}{180,33,147}
\newcommand{\boxl}[1]{[#1]}
\newcommand\bbRm[2]{\mathbb{R}^{#1 \times #2}}
\newcommand{\bvec}[1]{\bm{#1}}
\newcommand{\vx}{\bvec{x}}
\newcommand{\vy}{\bvec{y}}
\newcommand{\eg}{\textit{e.g.}~}
\newcommand{\bbN}{\mathbb{N}}
\newcommand{\bbR}{\mathbb{R}}
\newcommand{\ccA}{\mathcal{A}}
\newcommand{\ccC}{\mathcal{C}}
\newcommand{\ccI}{\mathcal{I}}
\newcommand{\ccJ}{\mathcal{J}}
\newcommand{\ccP}{\mathcal{P}}
\journal{Journal of Computational Physics}
\begin{document}

\begin{frontmatter}

\title{Generative Modeling via Hierarchical Tensor Sketching}

\author[label1]{Yifan Peng}
\author[label1]{Yian Chen}
\address[label1]{Committee on Computational and Applied Mathematics, University of Chicago}

\author[label2]{E. Miles Stoudenmire}
\address[label2]{Center for Computational Quantum Physics, Flatiron Institute}
\author[label1,label3]{Yuehaw Khoo}
\address[label3]{Department of Statistics,
University of Chicago}
\begin{abstract}
We propose a hierarchical tensor-network approach for approximating high-dimensional probability density via empirical distribution. This leverages randomized singular value decomposition (SVD) techniques and involves solving linear equations for tensor cores in this tensor network. The complexity of the resulting algorithm scales linearly in the dimension of the high-dimensional density. An analysis of estimation error demonstrates the effectiveness of this method through several numerical experiments. 
\end{abstract}

\begin{keyword}Tensor Train, Generative Modeling, Randomized Algorithm, Hierarchical Tensor Decomposition
\end{keyword}
\end{frontmatter}

\section{Introduction}
Density estimation is one of the most fundamental problems in statistics and machine learning. Let $p^*$ be a probability density function on a $d$-dimensional space $\bbR^d$. The task of density estimation is to estimate $p^*$ based on a set of independently and identically distributed (i.i.d) samples $\{\vy^i\}_{i=1}^N$ drawn from the density. More precisely, our goal is to estimate $p^*$ from the sample empirical distribution,

\begin{align}
\label{eq:empirical_density}
    \hat{p}(\vx) = \frac{1}{N}\sum_{i=1}^N \delta_{\vy^i}(\vx),\ \text{where } \vx\in\bbR^d,
\end{align}
where $\delta_{\vy^i}$ is the $\delta$-measure supported on $\vy^i\in\bbR^d$. 

Traditional density estimators including the histograms \cite{hist1,hist2} and kernel density estimators \cite{kde1,kde2} (KDEs) typically perform well in low-dimensional settings. While it is possible to engineer kernels for the above methods that work in high-dimensional cases, 
recently it has been common to use neural network-based approaches that can learn features for high-dimensional density estimations. This includes autoregressive models \cite{autoreg1,autoreg2,autoreg3}, generative adversarial networks (GANs) \cite{gan}, variational autoencoders (VAEs) \cite{vae}, and normalizing flows \cite{nf1,nf2,nf3,nf4}. 

Alternatively, several works have proposed to approximate the density function in low-rank tensor networks, in particular the tensor train (TT) format \cite{tt} (known as matrix product state (MPS) in the physics literatures \cite{mps1,mps2}). Tensor-network represented distribution can efficiently generate independent and identically distributed (i.i.d.) samples by applying conditional distribution sampling \cite{dolgov2020approximation} to the obtained TT format. Furthermore, one can take advantage of the efficient TT format for fast downstream task applications, such as sampling, conditional sampling, computing the partition function, solving committor functions for large scale particle systems \cite{chen2023committor}, etc. 

In this work, we propose a randomized linear algebra based algorithm to estimate the probability density in the form of a hierarchical tensor-network given an empirical distribution, which further extends the MPS/TT \cite{tt-sketch} and tree tensor-network structure \cite{tang2022generative}. Hierarchical format applications play a critical role in statistical modeling by allowing for the representation of complex systems and the interactions between different local components. Various hierarchical structures have been proposed in the past with active applications in different fields, such as Bayesian modeling \cite{lawson2012bayesian,lee2011cognitive}, conditional generations \cite{saibaba2012efficient}, Gaussian process computations \cite{chen2021scalable,ambikasaran2015fast,geoga2020scalable,chen2023scalable}, optimizations \cite{shin2019hierarchical,chen2020multiscale,karsanina2018hierarchical} and high-dimensional density modeling \cite{genton2018hierarchical,cao2019hierarchical}. The hierarchical structure can effectively handle spatial random field such as Ising models on lattice where the sites can be clustered hierarchically. Similar to \cite{tt-sketch}, we use sketching technique to form a parallel system of linear equations with constant size with respect to the dimension $d$ where the coefficients of the linear system are moments of the empirical distribution. By solving $d\log_2{d}$ number of linear equations, one can obtain a hierarchical tensor approximation to the distribution.

\subsection{Prior work}
In the context of recovering low-rank tensor trains (TTs), there are two general types of input data. The first type involves the assumption that a $d$-dimensional function $p$ can be evaluated at any point and it aims to recover $p$ with a limited number of evaluations (typically polynomial in $d$). Techniques such as TT-cross \cite{oseledets2010tt}, DMRG-cross \cite{savostyanov2011fast}, TT completion \cite{steinlechner2016riemannian}, and generalizations \cite{wang2017efficient,khoo2017efficient} for tensor rings are under this category.

In the second scenario, where only samples from distribution are given, \cite{ttde1,ttde2,ttde3} propose a method to recover the density function by minimizing some measure of error, such as the Kullback-Leibler (KL) divergence, between the empirical distribution and the MPS/TT ansatz. However, due to the nonlinear parameterization of a TT in terms of the tensor components, optimization approaches can be prone to local minima issues. A recent work \cite{tt-sketch} proposes and analyzes an algorithm that outputs an MPS/TT representation of $\hat{p}$ to estimate $p^*$, by determining each MPS/TT core in parallel via a perspective that views randomized linear algebra routine as a method of moments.

Although our method shares some similarities with tensor compression techniques in the current literature and can be utilized for compressing a large exponentially sized tensor into a low-complexity hierarchical tensor, our focus is primarily on an estimation problem instead of an approximation problem. Specifically, we aim to estimate the ground truth distribution $p^*$ that generates the empirical distribution $\hat{p}$ in terms of a TT, within a generative modeling context. Assuming the existence of an algorithm $\ccA$ that can take any $d$-dimensional function $p$ and output its corresponding TT as $\ccA(p)$, then one would expect that such $\ccA$ could minimize the following differences:
$$  p^* - \ccA(\hat{p}) = \underbrace{p^* - \ccA(p^*)}_{\text{approximation error}} + \quad \underbrace{ \ccA(p^*) -\ccA(\hat{p })}_{\text{estimation error}} $$
In generative modeling context, the empirical distribution $\hat{p}$ is affected by sample variance, leading to variance in the TT representation $\ccA(\hat{p})$ and, consequently, the estimation error. Our method is designed to minimize this estimation error. 

In contrast, methods based on singular value decomposition (SVD) \cite{tt} and randomized linear algebra \cite{oseledets2010tt,savostyanov2011fast,shi2021parallel,generalized_nystorm} aim to compress the input function $p$ as a TT such that $\ccA(p)\approx p$. Using such methods in the statistical learning context can result in an estimation error of $\ccA(p^*)-\ccA(\hat{p})\approx p^*-\hat{p}$ which grows exponentially with the number of variables $d$ for a fixed number of samples.

\subsection{Contributions}
Here we summarize our contributions.
\begin{enumerate}
    \item We propose an optimization free method to construct a hierarchical tensor-network for approximating a high-dimensional probability density via empirical distribution in $O(N d\log d)$ complexity. The proposed method is suitable for representing a distribution coming from a spatial random field. 

    \item We analyze the estimation error in terms of Frobenious norm for high-dimensional tensor. We show that the error decays like $O(\frac{c^{\log d}}{\sqrt{N}})$ for some constant $c$ that depends on some easily computable norm of the reconstructed hierarchical tensor network.
\end{enumerate}

\subsection{Organization}
The paper is organized as follows. In Section~\ref{sec:notation} and Section~\ref{sec:tensor diagrams}, we introduce notations and tensor diagrams in this paper, respectively. In Section~\ref{sec:hierarchical_section3}, we present our algorithm for density estimation in terms of hierarchical tensor-network. In Section~\ref{sec:error analysis}, we analyze the estimation error of the algorithm. In Section~\ref{sec:numerical} we perform numerical experiments in several 1D and 2D Ising-like models.  Finally, we conclude in Section~\ref{sec:conclusion}.

\subsection{Notations}\label{sec:notation}

For an integer $n\in\bbN$, we define $\boxl{n}=[1,2,\cdots,n]$. For two integers $m,n\in\bbN$ where $n> m$, we use the \texttt{MATLAB} notation $m:n$ to denote the set $[m,m+1,\cdots,n]$.

In this paper, we will extensively work with vectors, matrices and tensors. We use boldface lower-case letters to denote vectors (\eg $\bm{x}$). We denote matrices and tensors by capital letters (\eg $A$). Let $\mathcal{I}$ and $\mathcal{J}$ be two sets of indices, we use $A(\mathcal{I},\mathcal{J})$ to denote a submatrix of $A$ with rows indexed by $\mathcal{I}$ and columns indexed by $\mathcal{J}$. We also use $A(\mathcal{I},:)$ or $A(:,\mathcal{J})$ to denote a set of rows or columns, respectively. Similarly, we use $A(\mathcal{I},:,:)$ to denote a set of first dimensional array indexed by $\ccI$ for a three-dimensional tensor $A$. To simplify the notation for matrix multiplication, we use $:$ to represent the multiplied dimension. For instance, $A(\ccI,:)B(:,\mathcal{K})=\sum_{j\in\ccJ}A(\ccI,j)B(j,\mathcal{K})$.

Besides, we generalize the notion of Frobenius norm $\|\cdot \|_F$ such that for a $d$-dimensional tensor $p$, it is defined as
\begin{equation}
    \| p \|_F = \sqrt{\sum_{x_1\in[n_1],\ldots,x_d\in[n_d]} p(x_1,\ldots,x_d)^2}.
\end{equation}

When working with discrete high-dimensional functions, it is convenient to reshape the function into a matrix. We call these matrices as \textit{unfolding matrices}. For a $d$-dimensional density $p:\ \boxl{n_1}\times\boxl{n_2}\times\cdots\times\boxl{n_d}\rightarrow\bbR$ for some $n_1,\cdots,n_d\in\bbN$, we define the $k$-th unfolding matrix of a $d$-dimensional function by $p(x_1,\cdots,x_k;x_{k+1},\cdots,x_d)$ or $p(\vx_{1:k},\vx_{k+1:d})$, which can be viewed as a matrix of shape $\prod^k_{i=1} n_i\times \prod^d_{i=k+1} n_{i}$. \par 

\subsection{Tensor diagrams}\label{sec:tensor diagrams}
To illustrate the method, we use tensor diagram frequently in the paper. In tensor diagrams, a tensor is represented by a node, where the dimensionality of the tensor is indicated by the number of its incoming legs. We use circle to represent a node in the following tensor diagram. In Figure~\ref{fig:dg_demo}(A), a three-dimensional tensor $A$ and a two-dimensional matrix $B$ are depicted in the tensor diagram, which can be treated as two functions $A(i_1,i_2,i_3)$ and $B(j_1,j_2)$, respectively. Besides, we use bold line to illustrate 
that the specific dimension has a larger size compared to other dimensions. Thus the number of $i_3$ is essentially larger than the number of $i_1$ or $i_2$ and the same property holds for $j_1$ compared to $j_2$ in Figure~\ref{fig:dg_demo}(A).\par 

In addition, we could use tensor diagram to represent tensor multiplication or tensor contraction, which is able to provide a convenient way to understand this operation. In Figure~\ref{fig:dg_demo}(B), leg $i_3$ of $A$ is joined with leg $j_1$ of $B$ and this supposes implicitly that the two legs have the same size, represented by a new notation $k$. This corresponds to the computation: $\sum_{k}A(i_1,i_2,k)B(k,j_2)=C(i_1,i_2,j_2)$. Besides, reshaping is another significant operation of tensor. In Figure~\ref{fig:dg_demo}(C), we demonstrate reshaping by combining the first two legs of tensor $A$ ($i_1$ and $i_2$) into a new leg with a size equivalent to the product of the sizes of $i_1$ and $i_2$. This procedure converts a three-dimensional tensor into a two-dimensional matrix. This is illustrated in Figure~\ref{fig:dg_demo}(C). More specifically, 
to emphasize the large size of the combined dimensions, we use a bold line in the right diagram of Figure~\ref{fig:dg_demo}(C), with index $i_{1:2}$, which follows the notation introduced in Section~\ref{sec:notation}.

Further background about tensor diagram and tensor notations could be found in \cite{orus2014advances}. An example is depicted in Figure~\ref{fig:h_tt} and this would be explained in detail in the next section.

\begin{figure}[!htb]
    \centering
    \subfloat[Tensor diagrams of 3-dimensional $A$ and 2-dimensional $B$.]{{\includegraphics[width=0.48\textwidth]{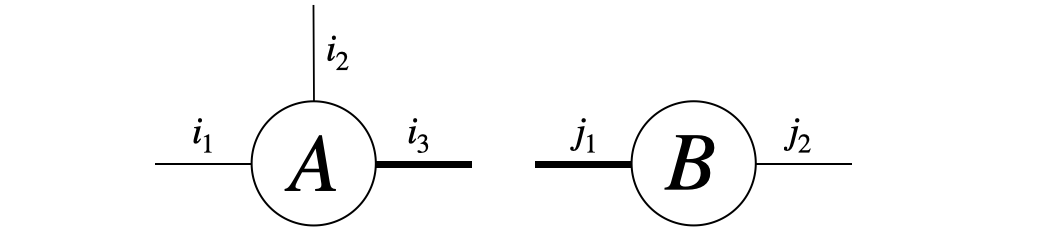}}}
    \quad 
    \subfloat[Tensor diagram of tensor multiplication. ]{{\includegraphics[width=0.48\textwidth]{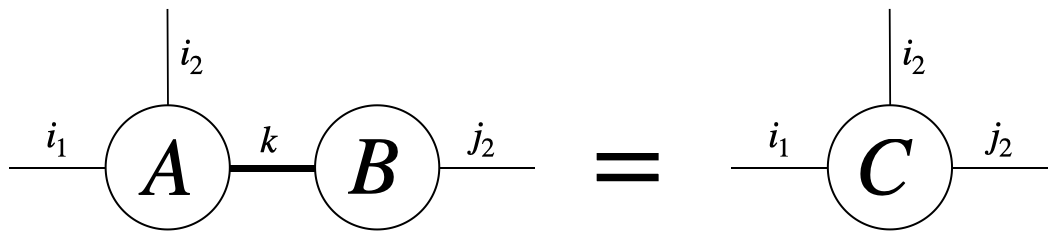}}}\\
    \subfloat[Tensor diagrams of reshaping.]{{\includegraphics[width=0.55\textwidth]{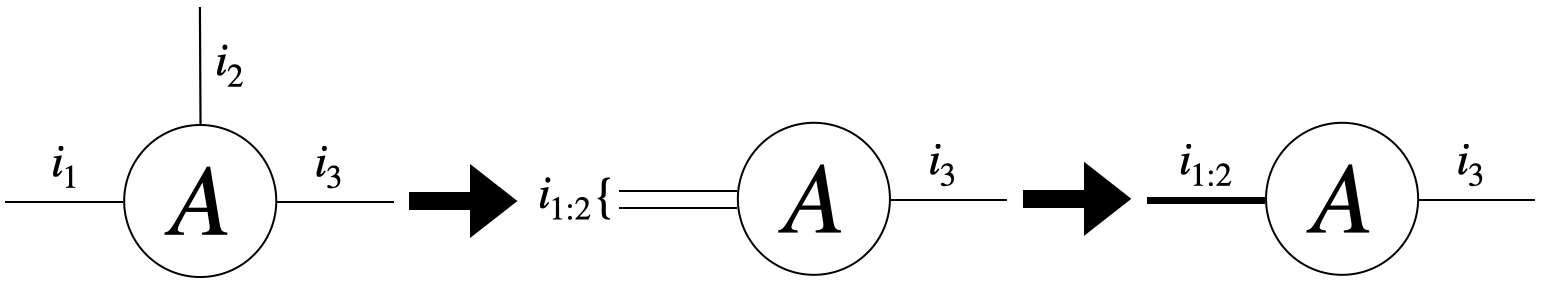}}}
    \caption{Tensor diagrams examples. }\label{fig:dg_demo}    
\end{figure}

\section{Density estimation via hierarchical tensor-network}\label{sec:hierarchical_section3}
In this section, we estimate a discrete $d$-dimensional density $p$ from the empirical distribution in terms of a hierarchical tensor-network. For convenience, we assume $d=2^L$, and we partition the dimension in a binary fashion to construct our hierarchical tensor-network, although the proposed method can be easily generalized to arbitrary choice of $d$. We assume $p:\ \boxl{n_1}\times\boxl{n_2}\times\cdots\times\boxl{n_d}\rightarrow\bbR$ for some $n_1,\cdots,n_d\in\bbN$. Given $N$ i.i.d samples from the density $p$, our objective is to approximate the density using a hierarchical tensor representation. 
\subsection{Main idea}\label{sec:main idea}
We start with a simple example to illustrate the main idea. Suppose a density only has two variables $p(x_1,x_2)$ and $\text{rank}(p) = r$. We can obtain a decomposition of $p$ via the following procedure: We first determine the column and row spaces of $p$ as $p_1(x_1,\gamma_1)$ and $p_2(x_2, \gamma_2)$, $\gamma_1,\gamma_2\in [r]$, then we determine a matrix $G \in \bbRm{r}{r}$ via solving the following equation for all $x_1,x_2$:
\begin{equation}
p_1(x_1,:)G p_2(x_2,:)^T = p(x_1,x_2),
\end{equation}
which in turn provides a low-rank approximation to $p$. For high-dimensional cases, we can recursively apply this binary partition to determine a hierarchically low-rank tensor-network. 

More specifically, let $l=0,\ldots,L$ be the levels of the hierarchical tensor-network. At each level $l$, we cluster the dimensions $[d]$ into $2^l$ parts and $k\in [2^l]$ is the index of nodes per level. Clusters of each level are defined as $\mathcal{C}^{(l)}_k:=(k-1)(\frac{2^L}{2^l})+1:k(\frac{2^L}{2^l})$ for $ k\in[2^l]$, and thus they partition $[d]$. The main idea is to solve for the nodes $G_k^{(l)}$ at each level via a set of core defining equations: 
\begin{small}
\begin{align}\label{eq:high d cdse}
p_1^{(1)}(\ccI_1^{(1)},:) G_1^{(0)}(:,:,\tilde \gamma_{1}^{(0)}) p_2^{(1)}(\mathcal{I}^{(1)}_2,:)^T&=  p_1^{(0)}(\ccI_1^{(1)},\ccI_2^{(1)},\tilde{\gamma}_1^{(0)}),\ \tilde{\gamma}_1^{(0)}\in [1], \nonumber \\  
p_{2k-1}^{(2)}(\mathcal{I}^{(2)}_{2k-1},:) G_{k}^{(1)}(:,:,\tilde \gamma_{k}^{(1)})p_{2k}^{(2)}(\mathcal{I}^{(2)}_{2k},:)^T &=  p_{k}^{(1)}(\mathcal{I}^{(2)}_{2k-1},\mathcal{I}^{(2)}_{2k}, \tilde \gamma_{k}^{(1)}), \tilde \gamma_k^{(1)} \in [\tilde{r}^{(1)}], k \in  [2], \nonumber \\ 
& \vdots \nonumber \\  
p_{2k-1}^{(l)}(\mathcal{I}^{(l)}_{2k-1},:) G_{k}^{(l-1)}(:,:,\tilde \gamma_{k}^{(l-1)})p_{2k}^{(l)}(\mathcal{I}^{(l)}_{2k},:)^T &=  p_{k}^{(l-1)}(\mathcal{I}^{(l)}_{2k-1},\mathcal{I}^{(l)}_{2k},\tilde \gamma_{k}^{(l-1)}), \tilde \gamma_k^{(l-1)} \in [\tilde{r}^{(l-1)}], k \in  [2^{l-1}], \nonumber \\
& \vdots  \nonumber\\ 
p_{2k-1}^{(L)}(\mathcal{I}^{(L)}_{2k-1},:)  G_{k}^{(L-1)}(:,:,\tilde \gamma_{k}^{(L-1)}) p_{2k}^{(L)}(\mathcal{I}^{(L)}_{2k},:)^T &=  p_{k}^{(L-1)}(\mathcal{I}^{(L)}_{2k-1},\mathcal{I}^{(L)}_{2k},\tilde \gamma_{k}^{(L-1)}), \tilde \gamma_{k}^{(L-1)} \in [\tilde{r}^{(L-1)}],k \in  [2^{L-1}], \nonumber \\ 
G_k^{(L)}&=p_k^{(L)},k \in  [2^{L}]. \nonumber \\ 
\end{align}
\end{small}The above equations \eqref{eq:high d cdse} are the key equations in this paper. Here $p_1^{(0)} := p$ and $\mathcal{I}_k^{(l)} = \prod_{i\in \mathcal{C}_k^{(l)}} [n_i]$ is the set of all possible values for $\vx_{\mathcal{C}_k^{(l)}}$ for $k\in[2^l],l\in[L]$. Similarly, we denote $\mathcal{J}_{k}^{(l)} = \prod_{i\in [d]\backslash\mathcal{C}_{k}^{(l)}} [n_i]$ as the set of all possible values for $\vx_{[d]\backslash\mathcal{C}_{k}^{(l)}}$ for every $k,l$. For convenience, we denote $G_k^{(L)}:=p_k^{(L)}$ for $k\in [2^L]$ in the last level. The core equations could be depicted directly via the tensor diagram in Figure~\ref{fig:ceq}, following the pattern in Section~\ref{sec:tensor diagrams}. Here clusters $\mathcal{C}_k^\bl$ satisfy the following relationship between two levels: $\mathcal{C}_k^\bl=\mathcal{C}_{2k-1}^{(l+1)}\cup \mathcal{C}_{2k}^{(l+1)}$, and furthermore, $\ccI_k^\bl=\ccI_{2k-1}^{(l+1)}\times \ccI_{2k}^{(l+1)}$ holds for $k\in[2^l],l\in[L-1]$. Then $p_k^{(l)}(\ccI_k^{(l)},\tilde{\gamma}_k^{(l)}),\tilde{\gamma}_k^{(l)}\in[\tilde{r}^{(l)}]$ is obtained by reshaping the first two indices of $p_{k}^{(l)}(\mathcal{I}^{(l+1)}_{2k-1},\mathcal{I}^{(l+1)}_{2k},\tilde \gamma_{k}^{(l)})$. We use $p_k^\bl$ to represent the two-dimensional form unless we emphasize the third index of it. 

\begin{figure}[!htb]
    \centering   \subfloat{{\includegraphics[width=0.7\textwidth]{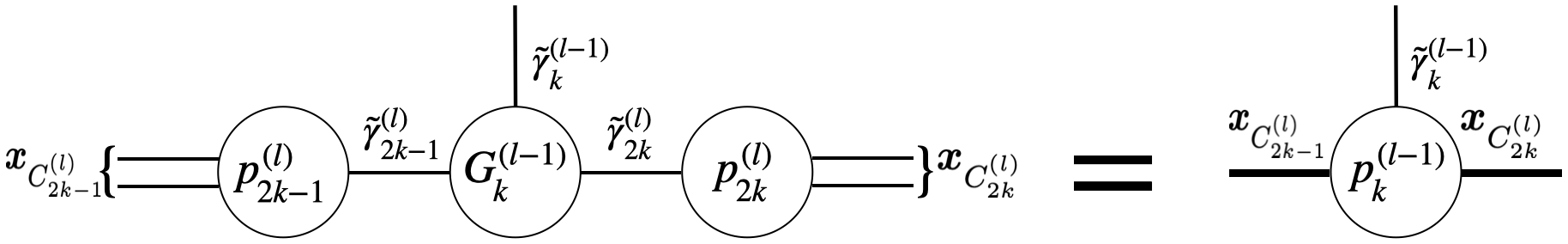}}}
    \caption{Tensor diagram of core defining equations \eqref{eq:high d cdse}. In order to go from left hand side to right hand side, we use the reshaping operation in Figure~\ref{fig:dg_demo}(C) to reshape the dimension ($\vx_{\ccC_{4k-3}^{(l+1)}},\vx_{\ccC_{4k-2}^{(l+1)}}$) of 
    $p_{2k-1}^{(l)}$ into $\vx_{\ccC_{2k-1}^{(l)}}$ and reshape the  dimension ($\vx_{\ccC_{4k-1}^{(l+1)}},\vx_{\ccC_{4k}^{(l+1)}}$) of
    $p_{2k}^{(l)}$ into $\vx_{\ccC_{2k}^{(l)}}$.}\label{fig:ceq}
\end{figure}

We now state an assumption for the unfolding density $p_k^\bl$  that guarantees that each equation in \eqref{eq:high d cdse} would have solution $G_k^\bl$. 
\begin{assumption}\label{assumption:range}
We assume $\text{Range}\left(p^{(l)}_{2k-1}\right)\supset \text{Range}\left(p_k^{(l-1)}(\mathcal{I}_{2k-1}^{(l)};\mathcal{I}_{2k}^{(l)},\tilde \gamma_k^{(l-1)})\right)$ and \\ $\text{Range}\left(p_{2k}^\bl\right) 
\supset \text{Range}\left(p_k^{(l-1)}(\mathcal{I}_{2k}^{(l)};\mathcal{I}_{2k-1}^{(l)},\tilde \gamma_k^{(l-1)})\right)$, respectively, for $k\in [2^{l-1}]$ and $l\in [L]$. 
\end{assumption}
Assumption~\ref{assumption:range} in turn gives us the following theorem, which guarantees a hierarchical tensor-network representation of $p$ in terms of $\{G^{(l)}_k\}_{k,l}$, as depicted by a tensor diagram in Figure~\ref{fig:h_tt}.
\begin{theorem}\label{thm:Grepre}
Suppose Assumption~\ref{assumption:range} holds, then $\{\{G_k^\bl\}_{k=1}^{2^l}\}_{l=0}^{L}$ in \eqref{eq:high d cdse} gives a hierarchical tensor-network representation of $p$.  
\end{theorem}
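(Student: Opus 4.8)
The plan is to prove the identity by downward induction on the level $l$, peeling off one layer of the binary tree at a time and substituting the core-defining equations \eqref{eq:high d cdse}. I take each $G_k^\blm$ to be a solution of the corresponding equation in \eqref{eq:high d cdse}; Assumption~\ref{assumption:range} is precisely what guarantees that such a solution exists, since for every fixed external index $\tilde\gamma_k^\blm$ it forces the column space of the unfolding $p_k^\blm(\ccI_{2k-1}\bl;\ccI_{2k}\bl,\tilde\gamma_k^\blm)$ to sit inside $\text{Range}(p_{2k-1}\bl)$ and its row space inside $\text{Range}(p_{2k}\bl)$. Hence each equation of \eqref{eq:high d cdse} holds as an exact equality for the chosen cores, so every substitution below is exact.

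To organize the induction, for each level $l$ let $T\bl$ denote the partial contraction obtained by contracting all cores of levels $0$ through $l-1$ along the internal tree bonds, leaving the $2^l$ parent bonds $\tilde\gamma_1\bl,\ldots,\tilde\gamma_{2^l}\bl$ of the level-$l$ nodes open. The inductive invariant $S(l)$ is
\begin{equation*}
p(\vx)=\sum_{\tilde\gamma_1\bl,\ldots,\tilde\gamma_{2^l}\bl}T\bl\big(\tilde\gamma_1\bl,\ldots,\tilde\gamma_{2^l}\bl\big)\prod_{k=1}^{2^l}p_k\bl\big(\vx_{\ccC_k\bl},\tilde\gamma_k\bl\big).
\end{equation*}
The base case $S(0)$ is immediate: $T^{(0)}$ is the empty contraction (equal to $1$), $\ccC_1^{(0)}=[d]$, the bond $\tilde\gamma_1^{(0)}$ ranges over $[1]$, and $p_1^{(0)}:=p$, so $S(0)$ reduces to $p(\vx)=p(\vx)$.

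For the inductive step I assume $S(l-1)$ and expand each factor $p_k^\blm(\vx_{\ccC_k^\blm},\tilde\gamma_k^\blm)$ in it by the level-$l$ line of \eqref{eq:high d cdse}, after reshaping the first two indices of its right-hand side via $\ccC_k^\blm=\ccC_{2k-1}\bl\cup\ccC_{2k}\bl$ and $\ccI_k^\blm=\ccI_{2k-1}\bl\times\ccI_{2k}\bl$; this rewrites $p_k^\blm$ as the contraction of $p_{2k-1}\bl$, $G_k^\blm$, and $p_{2k}\bl$ over the two child bonds $\tilde\gamma_{2k-1}\bl,\tilde\gamma_{2k}\bl$. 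Substituting this for every $k\in[2^{l-1}]$ and summing, the cores $\{G_k^\blm\}_k$ contract against $T^{(l-1)}$ over the bonds $\{\tilde\gamma_k^\blm\}_k$ to assemble exactly $T\bl$, while the $2^{l-1}$ pairs of factors enumerate all $2^l$ factors $p_{k'}\bl$, $k'\in[2^l]$; this is $S(l)$. Taking $l=L$ and invoking the terminal identification $G_k^\bL:=p_k^\bL$ from the last line of \eqref{eq:high d cdse}, the product $\prod_k p_k^\bL$ becomes $\prod_k G_k^\bL$ while $T^\bL$ is the contraction of all internal cores; hence $p$ equals the full contraction of $\{\{G_k\bl\}_k\}_{l=0}^{L}$ over the tree of Figure~\ref{fig:h_tt}, which is the asserted hierarchical tensor-network representation.

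The routine but error-prone part is the index bookkeeping in the inductive step: one must verify that the column index of the factor $p_{2k-1}\bl$ in the level-$(l-1)$ equation is literally the parent bond $\tilde\gamma_{2k-1}\bl$ appearing as the external index of the target $p_{2k-1}\bl$ in the level-$l$ equation (node $2k-1$), so that the same symbol denotes the same object at both levels and the recursion closes. I expect this matching of bond labels across consecutive levels, together with the reshaping convention that identifies the three-index and two-index forms of $p_k\bl$, to be the only genuine point to check; once it is in place the telescoping substitution is purely mechanical.
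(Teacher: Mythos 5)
Your proof is correct and takes essentially the same route as the paper's: Assumption~\ref{assumption:range} yields solvability of each equation in \eqref{eq:high d cdse} via pseudo-inverses of $p_{2k-1}^{(l)},p_{2k}^{(l)}$, and the representation follows by recursively substituting the level-$l$ equations from the root down to the leaves, terminating with $G_k^{(L)}=p_k^{(L)}$. Your explicit inductive invariant $S(l)$ is just a more formal writeup of the telescoping recursion the paper describes informally.
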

\begin{proof}
Based on Assumption~\ref{assumption:range}, we have \text{Range}($p_1^{(0)}(\ccI_1^{(1)};\ccI_2^{(1)},\tilde{\gamma}_1^{(0)}))\subset \text{Range}(p_1^{(1)})$ and \\  \text{Range}($p_1^{(0)}(\ccI_2^{(1)};\ccI_1^{(1)},\tilde{\gamma}_1^{(0)}))\subset \text{Range}(p_2^{(1)})$, therefore the equation defining $G^{(0)}_1$ in \eqref{eq:high d cdse} admits a solution, which comes from taking the pseudo-inverse of $p_1^{(1)},p_2^{(1)}$, i.e. \begin{equation*}
G^{(0)}_1(:,:,\tilde \gamma_1^{(0)})  = \left(p_1^{(1)}\right)^\dagger p_1^{(0)}(:,:,\tilde \gamma_1^{(0)}) \left(p_2^{(1)T}\right)^\dagger, \tilde{\gamma}_1^{(0)} \in [1],
\end{equation*}
Now we already construct a representation of $p_1^{(0)} = p$ based on $p^{(1)}_1, p^{(1)}_2$ and $G^{(0)}_1$, after solving the equation involving $G^{(0)}_1$ in \eqref{eq:high d cdse}. Furthermore, we could express $p^{(1)}_1$ and $p^{(1)}_2$ in terms of $G^{(1)}_1, p^{(2)}_1, p^{(2)}_2$ and  $G^{(1)}_2, p^{(2)}_3, p^{(2)}_4$ by solving the equations for $G^{(1)}_1$ and $G^{(1)}_2$ in  \eqref{eq:high d cdse} (which again admits solutions due to Assumption~\ref{assumption:range}). By recursing this procedure on $ p^{(2)}_1, p^{(2)}_2,  p^{(2)}_3, p^{(2)}_4$, and the lower level $\{\{p^{(l)}_k\}_{k=1}^{2^l}\}_{l=3}^L$, we obtain a tensor-network representation of $p$ in terms of $\{\{G^{(l)}_k\}_{k=1}^{2^l}\}_{l=0}^L$.

\end{proof}

\begin{figure}[!htb]
    \centering   \subfloat{{\includegraphics[width=0.4\textwidth]{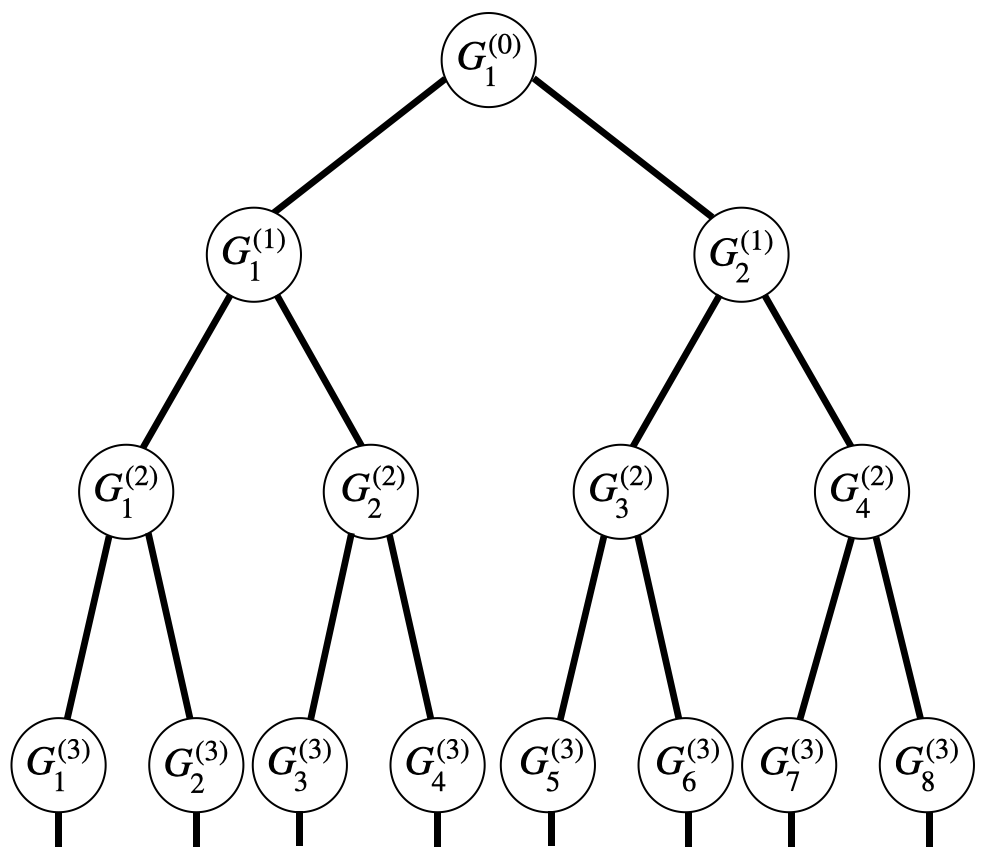}}}
    \caption{Tensor diagram of the hierarchical tensor-network representing an 8-dimensional density $p$.}\label{fig:h_tt}
\end{figure} 

At this point there are two issues that need to be addressed in order to use \eqref{eq:high d cdse} to determine the tensor cores $\{G^{(l)}_k\}_{k,l}$. The first issue is on how to obtain the range $p^{(l)}_{k}$ at each level. The second issue is that while one can determine the cores $\{G^{(l)}_k\}_{k,l}$ via \eqref{eq:high d cdse}, in practice it is challenging to estimate the exponential sized coefficient matrix $\{p^{(l)}_k\}_{k,l}$ (More specifically, it scales as $\vert \mathcal{I}^{(l)}_k\vert$, exponential with respect to $|\ccC_k^\bl|$) via only finite number of samples. To this end, we need to reduce the size of the system in \eqref{eq:high d cdse}.

We deal with these two issues as follows:
\begin{itemize}
\item \textbf{Obtaining $p^{(l)}_{k}$}:  Inspired by how one ``sketches'' the range of a matrix in randomized linear algebra literature \cite{oseledets2010tt,savostyanov2011fast}, we define 
\begin{equation}
p^{(l)}_k(\mathcal{I}^{(l)}_k,\tilde \gamma_k^{(l)}) = p(\mathcal{I}^{(l)}_k; \mathcal{J}^{(l)}_k) T_k^{(l)}(\mathcal{J}^{(l)}_k,\tilde \gamma_k^{(l)}), \ \ \tilde{\gamma}_k^{(l)} \in [\tilde{r}^{(l)}],\label{eq:computePlk}
\end{equation}
which has size $\vert \mathcal{I}^{(l)}_k \vert \times \tilde r^{(l)}$ and sketches the range of matrix $p(\mathcal{I}^{(l)}_k; \mathcal{J}^{(l)}_k)$ via multiplying it with sketch function $T_k^{(l)}(\mathcal{J}^{(l)}_k,\tilde \gamma_k^{(l)})$. It is desirable if the sketch function can capture the range: $\text{Range}(p^{(l)}_k(\mathcal{I}^{(l)}_k,\tilde \gamma_k^{(l)}) )= \text{Range}(p(\mathcal{I}^{(l)}_k; \mathcal{J}^{(l)}_k) T_k^{(l)}(\mathcal{J}^{(l)}_k,\tilde \gamma_k^{(l)}))=\text{Range}(p(\mathcal{I}^{(l)}_k; \mathcal{J}^{(l)}_k))$. The choice of sketch function will be discussed in Section~\ref{section:sketching}. 

\item \textbf{Reducing the number of equations}: 
Note that the system of equations in \eqref{eq:high d cdse} is over-determined, as each $G_k^{(l)}(:,:,\tilde \gamma_k^{(l)})$ is of size at most $\tilde r^{(l+1)} \times \tilde r^{(l+1)}$. Therefore in principle, one can reduce the rows and columns of each equation in \eqref{eq:high d cdse} such that each equation involves only  $\tilde r^{(l+1)} \times \tilde r^{(l+1)}$ coefficient matrices. This can be done by applying sketch function yet another time. In particular, we apply $S_{2k-1}^{(l)}(\mathcal{I}^{(l)}_{2k-1}, \tilde \nu_{2k-1}^{(l)})$ and $S_{2k}^{(l)}(\mathcal{I}^{(l)}_{2k}, \tilde \nu_{2k}^{(l)})$, $\tilde \nu_{2k-1}^{(l)}, \tilde \nu_{2k}^{(l)}\in [\tilde r^{(l)}] $ to both sides of the equations in \eqref{eq:high d cdse} which results in the following equations:
\begin{equation}
    A^{(l)}_{2k-1}(\tilde \nu_{2k-1}^{(l)},:)G_{k}^{(l-1)}(:,:,\tilde \gamma_{k}^{(l-1)})A^{(l)}_{2k}(\tilde{\nu}_{2k}^{(l)},:)^T= B_k^{(l-1)}(\tilde \nu^{(l)}_{2k-1},\tilde \nu^{(l)}_{2k},\tilde \gamma_k^{(l-1)}), \tilde \gamma_k^{(l-1)} \in [\tilde{r}^{(l-1)}],\label{eq:cde_sketch2}
\end{equation}
for $k \in [2^{l-1}], l \in [L]$ where
\begin{equation}
A^\bl_k(\tilde \nu_k^\bl,\tilde \gamma_k^\bl) = S_k^{(l)}(\ccI_k^\bl,\tilde \nu_k^\bl)^T p_k^\bl( \ccI_k^\bl,\tilde \gamma^\bl_k)=S_k^{(l)}(\ccI_k^\bl,\tilde \nu_k^\bl)^Tp(\mathcal{I}^{(l)}_k; \mathcal{J}^{(l)}_k) T_k^{(l)}(\mathcal{J}^{(l)}_k,\tilde \gamma_k^{(l)}), \label{eq:formulaA}
\end{equation}
and
\begin{equation}
    B_k^{(l)}(\tilde \nu^{(l+1)}_{2k-1},\tilde \nu^{(l+1)}_{2k},\tilde \gamma_k^\bl)=S^{(l+1)}_{2k-1}(\mathcal{I}^{(l+1)}_{2k-1},\tilde \nu_{2k-1}^{(l+1)})^T  p_{k}^{(l)}(\mathcal{I}^{(l+1)}_{2k-1},\mathcal{I}^{(l+1)}_{2k}, \tilde \gamma_{k}^{(l)}) S^{(l+1)}_{2k}(\mathcal{I}^{(l+1)}_{2k},\tilde \nu_{2k}^{(l+1)}). \label{eq:formulaB}
\end{equation}
At this point, one can solve for $G^{(l-1)}_k(:,:,\tilde \gamma_k^{(l-1)})$ in \eqref{eq:cde_sketch2} via pseudo-inverse 
\begin{align}\label{eq:solutionG_sketch}
&G^{(l-1)}_k(:,:,\tilde \gamma_k^{(l-1)})=({A^{(l)}_{2k-1}})^\dagger B_k^{(l-1)}(:,:,\tilde \gamma_k^{(l-1)})({A^{(l)T}_{2k}})^\dagger, k \in [2^{l-1}], l \in [L],  \nonumber \\
&G_k^{(L)}(\ccI_k^{(L)},\tilde \gamma_k^{(L)})=p_k^{(L)}(\ccI_k^{(L)},\tilde \gamma_k^{(L)}), k \in [2^{L}].
\end{align}
The equations with reduced size could be shown via the tensor diagram in Figure~\ref{fig:ceq_sktech}:
\begin{figure}[!htb]
    \centering   \subfloat{{\includegraphics[width=1.0\textwidth]{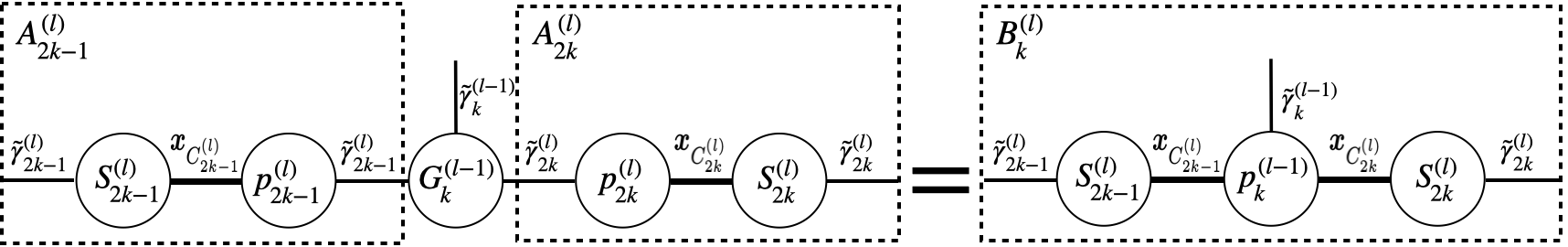}}}
    \caption{Tensor diagram of reduced core defining equations \eqref{eq:cde_sketch2}.}\label{fig:ceq_sktech}
\end{figure} 
\end{itemize}
The following theorem guarantees that the solution \eqref{eq:solutionG_sketch} to the reduced system of equations gives a tensor-network representation of $p$.
\begin{theorem}\label{thm:sketchrange}

If $\text{Range}\left(p(\mathcal{I}^{(l)}_k; \mathcal{J}^{(l)}_k)T_k^\bl\right)=\text{Range}\left(p(\mathcal{I}^{(l)}_k; \mathcal{J}^{(l)}_k)\right)$ and $\text{Range}\left(p( \mathcal{I}^{(l)}_k;\mathcal{J}^{(l)}_k)^TS_k^{(l)}\right)=\text{Range}\left(p(\mathcal{I}^{(l)}_k;\mathcal{J}^{(l)}_k)^T\right)$ for $k\in [2^l], l \in [L]$, then $\{\{G_k^\bl\}_{k=1}^{2^l}\}_{l=0}^{L}$ defined in \eqref{eq:solutionG_sketch} gives a hierarchical tensor-network representation of $p$.
\end{theorem}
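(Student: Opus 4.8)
The plan is to reduce Theorem~\ref{thm:sketchrange} to Theorem~\ref{thm:Grepre} in two stages. First I would argue that the two range hypotheses force Assumption~\ref{assumption:range}, so that the full (unsketched) system \eqref{eq:high d cdse} is solvable and Theorem~\ref{thm:Grepre} applies; second, and this is the real content, I would show that the pseudo-inverse solution \eqref{eq:solutionG_sketch} of the reduced system \eqref{eq:cde_sketch2} actually solves the full system \eqref{eq:high d cdse}, after which Theorem~\ref{thm:Grepre} finishes the argument.

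For the first stage, note that since $p_k^\bl = p(\ccI_k^\bl;\ccJ_k^\bl)T_k^\bl$, the hypothesis $\text{Range}(p(\ccI_k^\bl;\ccJ_k^\bl)T_k^\bl)=\text{Range}(p(\ccI_k^\bl;\ccJ_k^\bl))$ is precisely $\text{Range}(p_k^\bl)=\text{Range}(p(\ccI_k^\bl;\ccJ_k^\bl))$. To obtain Assumption~\ref{assumption:range} I would check that each column of $p^{\blm}(\ccI_{2k-1}^\bl;\ccI_{2k}^\bl,\tilde{\gamma}_k^{\blm})$ lies in $\text{Range}(p(\ccI_{2k-1}^\bl;\ccJ_{2k-1}^\bl))$: expanding $p_k^{\blm}=p(\ccI_k^{\blm};\ccJ_k^{\blm})T_k^{\blm}$ and using $\ccC_k^{\blm}=\ccC_{2k-1}^\bl\cup\ccC_{2k}^\bl$, hence $\ccJ_{2k-1}^\bl=\ccC_{2k}^\bl\cup\ccJ_k^{\blm}$, a fixed column turns out to be a linear combination of columns of $p(\ccI_{2k-1}^\bl;\ccJ_{2k-1}^\bl)$ with coefficients read off from $T_k^{\blm}$. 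This gives the first inclusion of Assumption~\ref{assumption:range}, and the second follows by symmetry; Theorem~\ref{thm:Grepre} then guarantees that each equation in \eqref{eq:high d cdse} has a solution $G_k^{*}$.

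For the second stage I would fix one equation and abbreviate $P_{2k-1}=p_{2k-1}^\bl$, $P_{2k}=p_{2k}^\bl$, $Q=p_k^{\blm}$, $A_j=(S_j^\bl)^T P_j$, and $B=(S_{2k-1}^\bl)^T Q\, S_{2k}^\bl$, so that the full equation is $P_{2k-1}GP_{2k}^T=Q$ and the reduced one is $A_{2k-1}GA_{2k}^T=B$. Substituting the full solution into $B$ gives $B=A_{2k-1}G^{*}A_{2k}^T$, so \eqref{eq:solutionG_sketch} reads $G=A_{2k-1}^\dagger A_{2k-1}\,G^{*}\,A_{2k}^T(A_{2k}^T)^\dagger$ and therefore
\begin{equation*}
P_{2k-1}GP_{2k}^T=\left(P_{2k-1}A_{2k-1}^\dagger A_{2k-1}\right)G^{*}\left(A_{2k}^T(A_{2k}^T)^\dagger P_{2k}^T\right).
\end{equation*}
Thus it suffices to prove $P_{2k-1}A_{2k-1}^\dagger A_{2k-1}=P_{2k-1}$ and $A_{2k}^T(A_{2k}^T)^\dagger P_{2k}^T=P_{2k}^T$, since these collapse the right-hand side to $P_{2k-1}G^{*}P_{2k}^T=Q$.

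These two identities are where the left-sketch hypothesis enters, and I expect this to be the main obstacle. The key elementary fact I would isolate is that $\text{Range}(M^T S)=\text{Range}(M^T)$ holds iff $\rank(S^T M)=\rank(M)$, equivalently iff $S^T$ is injective on $\text{Range}(M)$. Applying it with $M=p(\ccI_{2k-1}^\bl;\ccJ_{2k-1}^\bl)$, $S=S_{2k-1}^\bl$, together with $\text{Range}(P_{2k-1})=\text{Range}(M)$ from the first stage, shows $(S_{2k-1}^\bl)^T$ is injective on $\text{Range}(P_{2k-1})$, so $\ker A_{2k-1}=\ker((S_{2k-1}^\bl)^T P_{2k-1})=\ker P_{2k-1}$; since $A_{2k-1}^\dagger A_{2k-1}$ is the orthogonal projector onto $(\ker A_{2k-1})^\perp$, the first identity follows. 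For the second, $A_{2k}^T(A_{2k}^T)^\dagger$ is the orthogonal projector onto $\text{Range}(A_{2k}^T)=\text{Range}(P_{2k}^T S_{2k}^\bl)$; writing $P_{2k}^T=(T_{2k}^\bl)^T p(\ccI_{2k}^\bl;\ccJ_{2k}^\bl)^T$ and applying $(T_{2k}^\bl)^T$ to the left-sketch identity $\text{Range}(p(\ccI_{2k}^\bl;\ccJ_{2k}^\bl)^T S_{2k}^\bl)=\text{Range}(p(\ccI_{2k}^\bl;\ccJ_{2k}^\bl)^T)$ gives $\text{Range}(A_{2k}^T)=\text{Range}(P_{2k}^T)$, so the projector fixes $P_{2k}^T$. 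Carrying this out for every $k\in[2^{l-1}]$ and $l\in[L]$ shows the cores \eqref{eq:solutionG_sketch} solve \eqref{eq:high d cdse}, and Theorem~\ref{thm:Grepre} certifies that they assemble into a hierarchical tensor-network representation of $p$.
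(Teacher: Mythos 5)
Your proposal is correct and takes essentially the same route as the paper's proof: verify Assumption~\ref{assumption:range} from the $T$-sketch hypothesis, show that the pseudo-inverse solution \eqref{eq:solutionG_sketch} of the reduced system also solves the full system \eqref{eq:high d cdse} via the $S$-sketch hypothesis, and then invoke Theorem~\ref{thm:Grepre}. The only difference is presentational: where the paper tersely asserts that the row-space equality $\text{Range}\left(A_k^{(l)T}\right)=\text{Range}\left(p_k^{(l)T}\right)$ yields the unsketched equations, you make the mechanism explicit through the substitution $B=A_{2k-1}G^{*}A_{2k}^T$ and the two projector identities $P_{2k-1}A_{2k-1}^\dagger A_{2k-1}=P_{2k-1}$ and $A_{2k}^T(A_{2k}^T)^\dagger P_{2k}^T=P_{2k}^T$, which is a faithful (and welcome) expansion of the step the paper leaves implicit.
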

\begin{proof}
We first show that $\{G^{(l)}_k\}$ in \eqref{eq:solutionG_sketch} gives a solution for \eqref{eq:high d cdse}, with the definition of $p_k^\bl$ in \eqref{eq:computePlk}:  
$p^{(l)}_k(\mathcal{I}^{(l)}_k,\tilde \gamma_k^{(l)}) = p(\mathcal{I}^{(l)}_k; \mathcal{J}^{(l)}_k) T_k^{(l)}(\mathcal{J}^{(l)}_k,\tilde \gamma_k^{(l)})$. This could help to establish  \eqref{eq:solutionG_sketch} as a tensor-network representation of $p$ via Theorem~\ref{thm:Grepre}. 

To begin with, we show $\text{Range}\left(A_k^{(l)T}\right)=\text{Range}\left(p_k^{(l)T}\right)$ for every $k,l$, indicating that the row spaces of $A_k^{(l)}$ and $p_k^{(l)}$ are the same. This is due to the fact that $\text{Range}\left(p_k^{(l)T}\right) \\ =\text{Range}\left(T_k^{(l)T} p(\mathcal{I}^{(l)}_k; \mathcal{J}^{(l)}_k)^T \right) = \text{Range}\left(T_k^{(l)T} p(\mathcal{I}^{(l)}_k; \mathcal{J}^{(l)}_k)^T S_k^\bl\right) = \text{Range}\left(A_k^{(l)T}\right)$. The first and last equalities are based on the definition of $p_k^\bl$ and $A_k^\bl$, and second equality is due to the 
assumption of the statement where $\text{Range}\left(p( \mathcal{I}^{(l)}_k;\mathcal{J}^{(l)}_k)^TS_k^{(l)}\right)=\text{Range}\left(p(\mathcal{I}^{(l)}_k;\mathcal{J}^{(l)}_k)^T\right)$ and applying $T_k^{(l)T}$ to the row space does not change the range. Thus $G_k^{(l-1)}$ defined in \eqref{eq:solutionG_sketch} satisfies equality
${p^{(l)}_{2k-1}} G^{(l-1)}_k(:,:,\tilde \gamma_k^{(l-1)}){p^{(l)T}_{2k}} = p_k^{(l-1)}(:,:,\tilde \gamma_k^{(l-1)})$ for $\tilde \gamma_k^{(l-1)}\in [\tilde{r}^{(l-1)}]$.

We have shown that $\{G_k^\bl\}$ in \eqref{eq:solutionG_sketch} is a solution to \eqref{eq:high d cdse} when $\{p^{(l)}_k\}$ is defined in \eqref{eq:computePlk}. The proof can be concluded if we further show $\{p^{(l)}_k\}$  in \eqref{eq:computePlk} satisfies Assumption~\ref{assumption:range}, hence Theorem~\ref{thm:Grepre} can be directly applied to show $\{G_k^\bl\}$ gives a representation of $p$. For a specific choice of $k$ and $l$,  $\text{Range}\left(p_{2k-1}^\bl\right)=\text{Range}\left(p(\mathcal{I}^{(l)}_{2k-1}; \mathcal{J}^{(l)}_{2k-1})T_{2k-1}^\bl\right)= \text{Range}\left(p(\mathcal{I}^{(l)}_{2k-1}; \mathcal{J}^{(l)}_{2k-1})\right)= \\ \text{Range}\left(p(\mathcal{I}^{(l)}_{2k-1}; \mathcal{I}^{(l)}_{2k},\mathcal{J}^{(l-1)}_{k})\right)\supset \text{Range}\left(p(\mathcal{I}^{(l)}_{2k-1}; \mathcal{I}^{(l)}_{2k},\tilde{\gamma}_k^{(l-1)})\right)$ where  the first equality is by definition \eqref{eq:computePlk}, second equality is due to the assumption of the theorem, the third equality is due to the fact that $\mathcal{J}^{(l)}_{2k-1}= \mathcal{I}^{(l)}_{2k}\times \mathcal{J}^{(l-1)}_{k}$  and the first inclusion holds since $p(\mathcal{I}^{(l)}_{2k-1}; \mathcal{I}^{(l)}_{2k},\tilde{\gamma}_k^{(l-1)})$ is formed from applying the sketch function $T_k^{(l-1)}$ to the columns of  $p(\mathcal{I}^{(l)}_{2k-1}; \mathcal{I}^{(l)}_{2k},\mathcal{J}^{(l-1)}_{k})$. Similarly, $\text{Range}(p_{2k}^\bl)\supset \text{Range}\left(p(\mathcal{I}^{(l)}_{2k}; \mathcal{I}^{(l)}_{2k-1},\tilde{\gamma}_k^{(l-1)})\right)$, thus Assumption~\ref{assumption:range} is satisfied. \end{proof}

Now, $\tilde r^{(l)},l=1,\ldots,L$ depends on the number of sketch functions chosen. As we tend to choose a large set of sketch functions in order to capture the range of $p(\mathcal{I}^{(l)}_k; \mathcal{J}^{(l)}_k)$ and ${p^{(l)T}_k}$  properly, this gives rise to three issues: (1) from a numerical standpoint, one may run into stability issue when  taking the pseudo-inverse of a nearly low rank matrix. If $p(\mathcal{I}^{(l)}_{k};\mathcal{J}^{(l)}_{k})$ is nearly rank $r^{(l)}\leq \tilde r^{(l)}$, then one would expect  $A^{(l)}_k(\tilde \nu_k^\bl,\tilde \gamma_k^\bl) = S_k^{(l)}(\ccI_k^\bl,\tilde \nu_k^\bl)^T p(\mathcal{I}^{(l)}_{k};\mathcal{J}^{(l)}_{k}) T_k^{(l)}(\ccJ_k^\bl,\tilde \gamma_k^\bl)$ to be also nearly rank $r^{(l)}\leq \tilde r^{(l)}$. Taking the pseudo-inverse of $A^{(l)}_k$ can cause instability. (2) From a statistical standpoint, it is challenging to estimate $\{A^{(l)}_k\}_{k,l}$ and $\{B_k^{(l)}\}_{k,l}$, the moments of $p$, from a fixed number of samples in $\hat p$ when $\tilde r^{(l)}$ is large. (3) From a computational viewpoint, large $\tilde r^{(l)}$ causes large $G^{(l)}_k$, leading to high complexity in storing and deploying the hierarchical tensor network. In order to solve these issues caused by over-sketching, we introduce a trimming strategy in Section~\ref{sec:trimming}.

\subsection{Trimming}\label{sec:trimming}
In practice, we often apply a low-rank truncation to each $A^{(l)}_k$. Let $A^{(l)}_k \rightarrow \mathcal{P}_{r^{(l)}}(A^{(l)}_k) $, where $\mathcal{P}_{r^{(l)}}(\cdot)$ provides the best rank-$r^{(l)}$ approximation to a matrix in terms of the spectral norm. Then we could solve
\begin{equation}\label{eq:projection}
    \mathcal{P}_{r^{(l)}}(A^{(l)}_{2k-1})G_{k}^{(l-1)}(:,:,\tilde \gamma_{k}^{(l-1)})\mathcal{P}_{r^{(l)}}(A^{(l)}_{2k})^T = B_k^{(l-1)}(:,:,\tilde \gamma_k^{(l-1)}), \tilde \gamma_k^{(l-1)} \in [\tilde{r}^{(l-1)}], k \in [2^{l-1}], l \in [L]
\end{equation}
instead of \eqref{eq:cde_sketch2}. The following proposition shows why this could be feasible:
\begin{prop}\label{prop:trim_rankr}
Under the assumption of Theorem~\ref{thm:sketchrange}, if $\text{rank}\left(p(\mathcal{I}^{(l)}_k; \mathcal{J}^{(l)}_k)\right)=r^\bl \leq \tilde{r}^\bl$ for $k\in [2^l], l\in [L]$, then $\mathcal{P}_{r^{(l)}}(A^{(l)}_{k})=A^{(l)}_{k}$. 
\end{prop}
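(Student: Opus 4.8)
The plan is to reduce the claimed identity $\mathcal{P}_{r^{(l)}}(A^{(l)}_k)=A^{(l)}_k$ to a single rank bound on $A^{(l)}_k$. Since $\mathcal{P}_{r^{(l)}}(\cdot)$ returns the best rank-$r^{(l)}$ approximation in spectral norm, the Eckart--Young characterization gives $\mathcal{P}_{r^{(l)}}(A^{(l)}_k)=A^{(l)}_k$ precisely when $\text{rank}(A^{(l)}_k)\le r^{(l)}$: truncating the singular value decomposition at rank $r^{(l)}$ leaves a matrix of rank at most $r^{(l)}$ unchanged. Hence the whole proposition amounts to verifying $\text{rank}(A^{(l)}_k)\le r^{(l)}$.

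First I would invoke the explicit factorization of $A^{(l)}_k$ recorded in \eqref{eq:formulaA}, namely
\begin{equation*}
A^{(l)}_k = S_k^{(l)T}\, p(\mathcal{I}^{(l)}_k;\mathcal{J}^{(l)}_k)\, T_k^{(l)}.
\end{equation*}
The middle factor $p(\mathcal{I}^{(l)}_k;\mathcal{J}^{(l)}_k)$ has rank exactly $r^{(l)}$ by hypothesis. Applying the rank inequality for matrix products, $\text{rank}(XYZ)\le\text{rank}(Y)$, I immediately obtain $\text{rank}(A^{(l)}_k)\le\text{rank}\left(p(\mathcal{I}^{(l)}_k;\mathcal{J}^{(l)}_k)\right)=r^{(l)}$, which is exactly the bound needed. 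Combining this with the first step yields $\mathcal{P}_{r^{(l)}}(A^{(l)}_k)=A^{(l)}_k$ for every $k\in[2^l]$ and $l\in[L]$.

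I do not expect any genuine obstacle: the argument is essentially a one-line consequence of the product structure of $A^{(l)}_k$ together with the characterization of when SVD truncation acts as the identity. The one subtlety worth flagging is the role of the hypotheses of Theorem~\ref{thm:sketchrange} on the range-preserving sketches $S_k^{(l)},T_k^{(l)}$: they are not strictly needed for the inequality direction used above, but they guarantee that the left and right sketching does not collapse the rank, so that in fact $\text{rank}(A^{(l)}_k)=r^{(l)}$ rather than merely $\le r^{(l)}$. This ensures the rank-$r^{(l)}$ truncation in \eqref{eq:projection} discards no genuine signal while restoring numerical stability of the pseudo-inverses in \eqref{eq:solutionG_sketch}.
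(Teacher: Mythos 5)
Your proposal is correct, and it takes a cleaner route than the paper. The paper's own proof works through the sketch matrices: from the range-preservation assumptions of Theorem~\ref{thm:sketchrange} it asserts $\text{rank}(T_k^\bl)=r^\bl$ and $\text{rank}(S_k^\bl)=r^\bl$, and from there concludes that $A_k^\bl$ has rank exactly $r^\bl$, so that the best rank-$r^{(l)}$ approximation leaves it unchanged. Your argument bypasses the sketches entirely: the product rank inequality $\text{rank}(S_k^{(l)T}\,p(\mathcal{I}_k^\bl;\mathcal{J}_k^\bl)\,T_k^\bl)\le\text{rank}(p(\mathcal{I}_k^\bl;\mathcal{J}_k^\bl))=r^\bl$ already gives $\text{rank}(A_k^\bl)\le r^\bl$, and that alone forces $\mathcal{P}_{r^{(l)}}(A_k^\bl)=A_k^\bl$, since a matrix of rank at most $r^{(l)}$ is its own (unique) best rank-$r^{(l)}$ approximation. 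What your route buys is twofold. First, it is more general: it shows the conclusion of the proposition does not actually need the hypotheses of Theorem~\ref{thm:sketchrange}, only the rank bound on the unfolding matrix. Second, it sidesteps a shaky intermediate step in the paper: the deduction $\text{rank}(T_k^\bl)=r^\bl$ does not literally follow from $\text{Range}\bigl(p(\mathcal{I}_k^\bl;\mathcal{J}_k^\bl)T_k^\bl\bigr)=\text{Range}\bigl(p(\mathcal{I}_k^\bl;\mathcal{J}_k^\bl)\bigr)$ (take $T_k^\bl$ to be an identity: it preserves the range yet has full rank); what the assumption actually yields is $\text{rank}\bigl(p(\mathcal{I}_k^\bl;\mathcal{J}_k^\bl)T_k^\bl\bigr)=r^\bl$. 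What the paper's approach buys, once stated correctly, is the exact equality $\text{rank}(A_k^\bl)=r^\bl$ rather than an inequality — and this stronger fact is genuinely used downstream (e.g.\ in Lemma~\ref{lemma:errorG}, where $\text{rank}(A_i^{(l)*})=r^{(l)}$ justifies both the identity $\bigl(\mathcal{P}_{r^{(l)}}(A^{(l)*}_i)\bigr)^\dagger=\bigl(A^{(l)*}_i\bigr)^\dagger$ and the bound $\|\hat{A}^{(l)}_i-\mathcal{P}_{r^{(l)}}(\hat{A}^{(l)}_i)\|_2\le\|A_i^{(l)*}-\hat{A}_i^{(l)}\|_2$). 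You flag exactly this point in your closing remark — that the range assumptions upgrade $\le$ to $=$ — so your write-up, with that remark made precise along the lines of $\text{Range}(A_k^\bl)=\text{Range}(S_k^{(l)T}p(\mathcal{I}_k^\bl;\mathcal{J}_k^\bl))$, is a strict improvement on the paper's proof.
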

\begin{proof}
For every $k\in [2^l], l\in [L]$, since $\text{Range}\left(p(\mathcal{I}^{(l)}_k; \mathcal{J}^{(l)}_k)T_k^\bl\right)=\text{Range}\left(p(\mathcal{I}^{(l)}_k; \mathcal{J}^{(l)}_k)\right)$ and $\text{rank}\left(p(\mathcal{I}^{(l)}_k; \mathcal{J}^{(l)}_k)\right)=r^\bl \leq \tilde{r}^\bl$, then $\text{rank}\left(T_k^\bl\right)=r^\bl$. Similarly, $\text{rank}\left(S^{(l)}_k\right)=r^\bl$ based on
$\text{Range}\left(p( \mathcal{I}^{(l)}_k;\mathcal{J}^{(l)}_k)^TS_k^{(l)}\right)=\text{Range}\left(p(\mathcal{I}^{(l)}_k;\mathcal{J}^{(l)}_k)^T\right)$. Furthermore, 
$A^\bl_k =S_k^{(l)}(\ccI_k^\bl,:)^Tp(\mathcal{I}^{(l)}_k; \mathcal{J}^{(l)}_k)\\ T_k^{(l)}(\mathcal{J}^{(l)}_k,:) \in \bbRm{\tilde{r}^\bl}{\tilde{r}^\bl}$ in \eqref{eq:formulaA} is also of rank $r^\bl$. Therefore, $\mathcal{P}_{r^{(l)}}(A^{(l)}_{k})=A^{(l)}_{k}$ due to the fact that $\mathcal{P}_{r^{(l)}}(\cdot)$ provides the best rank-$r^{(l)}$ approximation.   
\end{proof}
For $A^{(l)}_k$ with exactly rank $r^{(l)}$, the projection $\mathcal{P}_{r^{(l)}}(\cdot)$ in \eqref{eq:projection} seems vacuous following Proposition~\ref{prop:trim_rankr}. However, this becomes crucial when $A^{(l)}_k$ is estimated from empirical moments, whose rank is higher than $r^\bl$, as mentioned in the end of last subsection. Now the solution becomes
\begin{equation}
    G_{k}^{(l-1)}(:,:,\tilde \gamma_{k}^{(l-1)}) = \left(\mathcal{P}_{r^{(l)}}(A^{(l)}_{2k-1})\right)^\dagger  B_k^{(l-1)}(:,:,\tilde \gamma_k^{(l-1)})\left(\mathcal{P}_{r^{(l)}}(A^{(l)}_{2k})^T\right)^{\dagger}, \tilde \gamma_k^{(l-1)} \in [\tilde{r}^{(l-1)}], \label{eq:solveG}
\end{equation}
for $k \in [2^{l-1}], l\in [L]$. Let the right singular matrices of $\mathcal{P}_{r^{(l)}}(A^{(l)}_{2k-1})$ and $\mathcal{P}_{r^{(l)}}(A^{(l)}_{2k})$ be $V^{(l)}_{2k-1},V^{(l)}_{2k}\in  \bbRm{\tilde{r}^{(l)}}{r^{(l)}}$. Since the range of $G^{(l-1)}_k(:,:,\tilde \gamma_k^{(l-1)})$ is given by $V^{(l)}_{2k-1},V^{(l)}_{2k}$, one can have a reduced size tensor-network with rank $\{r^{(l)}\}_l$ via defining
\begin{equation}
    C^{(l-1)}_k(:,:, \gamma_k^{(l-1)})  =  \sum_{\tilde \gamma_k^{(l-1)} \in [\tilde{r}^{(l-1)}]} \left( {V^{(l)T}_{2k-1}} G_{k}^{(l-1)}(:,:,\tilde \gamma_k^{(l-1)})  {V^{(l)}_{2k}}\right)  V^{(l-1)}_{k}(\tilde \gamma_k^{(l-1)},\gamma_k^{(l-1)}), \gamma_k^{(l-1)} \in [r^{(l-1)}],\label{eq:computeC}
\end{equation}
for $k \in [2^{l-1}], l\in [L]$. Besides, we have a similar definition of $C_k^{(L)}$ in the last level:
\begin{equation}
C_k^{(L)}(\mathcal{I}^{(l)}_k, \gamma_k^{(l)})=G^{(L)}_k(\mathcal{I}^{(L)}_k,:)V_k^{(L)}(:, \gamma_k^{(L)}),k\in [2^L].\label{eq:computeCkL}    
\end{equation}
The set of nodes $\{\{C_k^\bl\}_{k=1}^{2^l}\}_{l=0}^L$ gives rise to a hierarchical tensor-network in Figure~\ref{fig:trimming},  via inserting $V^{(l)}_{2k-1}V^{(l)T}_{2k-1},V^{(l)}_{2k}V^{(l)T}_{2k}$ for the corresponding $G_k^{(l-1)}$ in the hierarchical tensor-network and regrouping the components, as shown in Figure~\ref{fig:trimming}(A). To highlight that the smaller size of $C_k^\bl$ compared to $G_k^\bl$, we represent legs of $C_k^\bl$ with thin lines and legs of $G_k^\bl$ with bold lines.
In the following sections, we use this trimmed hierarchical tensor-network in Figure~\ref{fig:trimming}(B) as a representation of probability $p$.

\begin{figure}[!htb]
    \centering
    \subfloat[Insert $V_k^\bl V_k^{(l)T}$ and regroup the components in the hierarchical tensor-network.]{{\includegraphics[width=0.48\textwidth]{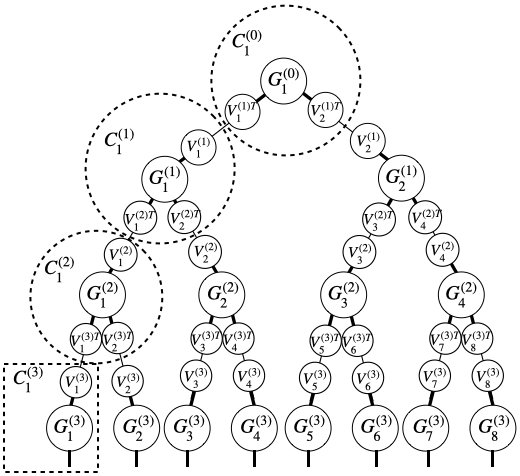}}}
    \quad 
    \subfloat[Trimmed hierarchical tensor-network from simplifying the representation with new variables. ]{{\includegraphics[width=0.48\textwidth]{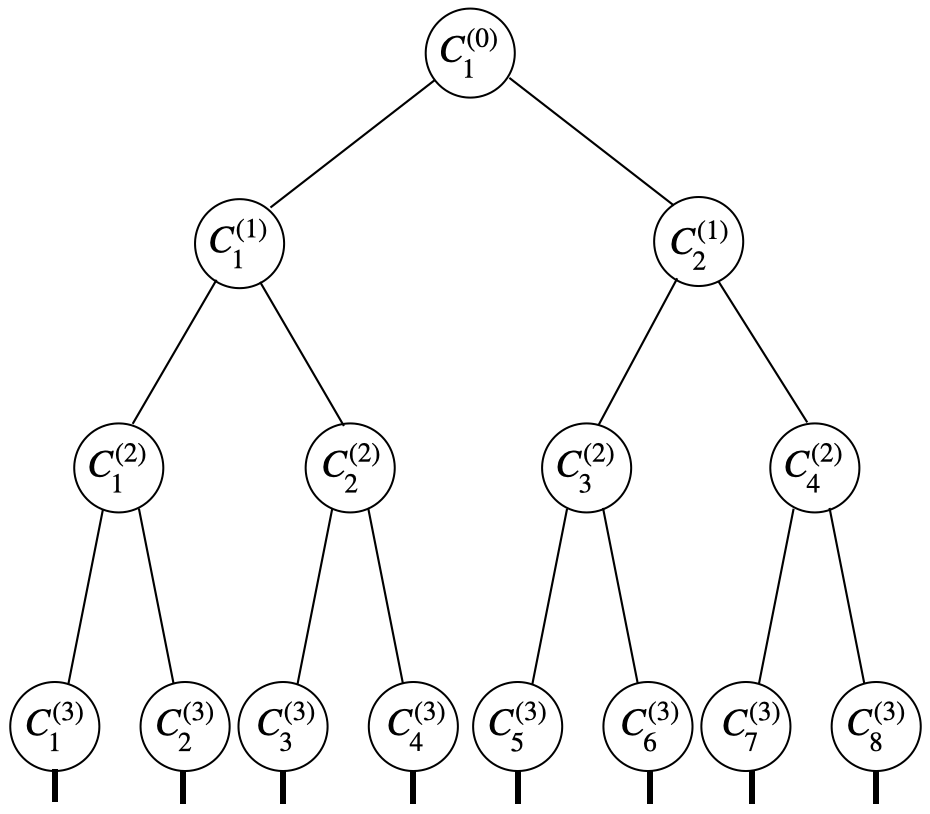}}}
    \caption{Tensor diagram of trimmed hierarchical tensor-network. }
    \label{fig:trimming}
\end{figure}

\subsection{Choice of sketch function}\label{section:sketching}

In this subsection, we introduce the choice of sketch function $\{S^{(l)}_k\}_{k,l}$ and $\{T^{(l)}_k\}_{k,l}$. There are two criteria for such a selection. (1) Since each $A_k^\bl$ is a collection of moments, one needs to be able to efficiently estimate $A_k^\bl$ from the empirical distribution $\hat p$. (2) $T_k^\bl$ needs to capture the range of $p(\mathcal{I}_k^{(l)};\mathcal{J}_k^{(l)})$ and furthermore, $S^{(l)}_k$ needs to capture the row space of $p(\mathcal{I}_k^{(l)};\mathcal{J}_k^{(l)})$, as the assumption in Theorem~\ref{thm:sketchrange}.

We deal with the choice of basis via using a \emph{cluster} basis, which is successfully implmented in  atomic cluster expansion \cite{drautz2019atomic,dusson2022atomic}. Starting from a single variable basis set $\{\phi_i\}^n_{i=1}$, we construct the cluster set of one variable:
\begin{equation}
   \mathbf{B}^{1,d}:=\bigcup_{k_1=1}^d \left(\bigcup_{i_{k_1}=1}^n\{\phi_{i_{k_1}}(x_{k_1})\}\right),
\end{equation}
and cluster set of two variables:
\begin{equation}
    \mathbf{B}^{2,d}:=\bigcup_{(k_1,k_2)\in {[d]\choose 2}}\left(\bigcup_{i_{k_1},i_{k_2}=1}^n\{\phi_{i_{k_1}}(x_{k_1})\phi_{i_{k_2}}(x_{k_2})\}\right), 
\end{equation}
and cluster set of $t$ variables in general:
\begin{equation}
\mathbf{B}^{t,d}:=\bigcup_{(k_1,\ldots,k_t)\in {[d] \choose t}}\left(\bigcup_{i_{k_1},\cdots,i_{k_t}=1}^n\{\phi_{i_{k_1}}(x_{k_1})\cdots \phi_{i_{k_t}}(x_{k_t})\}\right).
\end{equation} 
Commonly, $\{\phi_i\}^n_{i=1}$ are chosen as a set of orthonormal basis functions for convenience. 

The way that we apply the sketching function is straightforward. We define 

\begin{equation}
S^{(l)}_{k}(\cdot,\tilde \nu^{(l)}_{k})\in  \mathbf{B}^{t,\vert \mathcal{C}^{(l)}_{k}\vert}, \ \ T^{(l)}_k(\cdot,\tilde \gamma_k^{(l)})\in \mathbf{B}^{t,d-\vert \mathcal{C}^{(l)}_{k}\vert}, 
\end{equation}
where clusters $\mathcal{C}^{(l)}_k=(k-1)(\frac{2^L}{2^l})+1:k(\frac{2^L}{2^l})$ for $k\in[2^l],l\in[L]$, as defined previously in Section~\ref{sec:main idea}. Therefore, to obtain $A^{(l)}_{k}$ in \eqref{eq:formulaA},
we evaluate  $S^{(l)}_{k}(\cdot,\tilde \nu^{(l)}_{k}):\vx_{\mathcal{C}^{(l)}_{k}}\rightarrow \mathbb{R}$ for all possible choices of $\vx_{\mathcal{C}^{(l)}_{k}}$ to obtain the sketch matrix $S^{(l)}_{k}(\ccI_k^{(l)},\tilde \nu^{(l)}_{k})$, and evaluate $T^{(l)}_{k}(\cdot,\tilde \gamma^{(l)}_k):\vx_{[d]\backslash\mathcal{C}^{(l)}_{k}}\rightarrow \mathbb{R}$ for all possible choices of $\vx_{[d]\backslash\mathcal{C}^{(l)}_{k}}$ to obtain the sketch matrix $T^{(l)}_{k}(\ccJ_k^{(l)},\tilde \gamma^{(l)}_k)$. In practice, when we estimate $A^{(l)}_{k}$ from empirical distribution $\hat p$ we do not need to consider all possible values of $\vx_{\mathcal{C}^{(l)}_{k}},\vx_{[d]\backslash\mathcal{C}^{(l)}_{k}}$ when forming $A^{(l)}_k$, but only those in the support of $\hat p$. Therefore, we only need to evaluate the sketch functions $N$ times. We again use $S^{(l)}_{2k-1}(\ccI_{2k-1}^{(l)},\tilde \nu^{(l)}_{2k-1}),S^{(l)}_{2k}(\ccI_{2k}^{(l)},\tilde \nu^{(l)}_{2k})$ and $T^{(l-1)}_{k}(\ccJ_k^{(l-1)},\tilde \gamma^{(l-1)}_k)$ together with $p(\mathcal{I}^{(l)}_{2k-1},\mathcal{I}^{(l)}_{2k},\mathcal{J}^{(l-1)}_{k} )$ to obtain $B^{(l-1)}_k$ in \eqref{eq:formulaB}.

Under this construction, sketch index set $\{\tilde{\nu}^{(l)}_{k}\}$ has cardinality ${\vert \mathcal{C}^{(l)}_{k}\vert \choose t}n^t$, significantly larger than true rank $r^\bl$ of $p(\ccI_k^\bl;\ccJ_k^\bl)$, especially in the case when cluster size $\vert \mathcal{C}^{(l)}_{k}\vert$ is large. As a result, the sketch may be substantially oversized, which can in turn create difficulties in the subsequent trimming step. \par 
To solve above issue, we develop randomized cluster basis following randomized singular value decomposition technique \cite{halko2011finding}. For our problem, we assume that $\text{rank}\left(p(\ccI_k^\bl;\ccJ_k^\bl)\right)=r^\bl$ is small, so a sketch with only a modest number of columns is sufficient to capture the relevant column space. Let $\tilde{S}^{(l)}_{k}(\cdot,\cdot)\in  \mathbf{B}^{t,\vert \mathcal{C}^{(l)}_{k}\vert}$ denote the original cluster basis. We define the randomized basis
\begin{equation}
    S^{(l)}_{k}(\vx_{\mathcal{C}^{(l)}_{k}},\tilde \nu^{(l)}_{k})=\tilde{S}^{(l)}_{k}(\vx_{\mathcal{C}^{(l)}_{k}},:)W(:,\tilde \nu^{(l)}_{k}), \tilde \nu^{(l)}_{k}\in [\tilde{r}^\bl], 
\end{equation}
where $W \in \bbRm{{\vert \mathcal{C}^{(l)}_{k}\vert \choose t}n^t}{\tilde{r}^\bl}$ is a random matrix and we assume that it has orthonormal columns for convenience: $\langle W(:,i), W(:,j) \rangle = \delta_{i,j}$. Here we could choose $\tilde{r}^\bl$, a dimension-independent constant, slightly larger than true rank $r^\bl$ of $p(\ccI_k^\bl;\ccJ_k^\bl)$, to avoid oversketching issue generated from original cluster basis. Besides, we assume cluster basis matrix $\tilde{S}_k^\bl$ have orthonormal columns as well. In the end, randomized cluster basis $S_k^\bl$ have orthonormal columns: $\langle S_k^\bl(:,i), S_k^\bl(:,j) \rangle = \langle \tilde{S}_k^\bl(:,:)W(:,i), \tilde{S}_k^\bl(:,:)W(:,j) \rangle = \langle W(:,i), W(:,j) \rangle = \delta_{i,j}$. We repeat this procedure for $T^{(l)}_k(\cdot,\tilde \gamma_k^{(l)})$ to obtain an orthonormal randomized basis in the same way. \par

Another improvement is to construct the random coefficient $W$ using a tensor-network parameterization. Specifically, for each column $W(:,i)$, we model it either as a rank-one tensor product or as a tensor-train representation with rank $\tilde{r}$. This structured construction can further reduce computational cost and improve overall efficiency.

\subsection{Algorithm and complexity}

In this section, we summarize the steps for obtaining $\{C_k^\bl\}_{k,l}$, the tensor-network representation of a density $p$, in Algorithm~\ref{alg:downsweep}.

\begin{algorithm}
\caption{Algorithm for hierarchical tensor-network estimation (algorithm denoted as $\mathcal{A}(\cdot)$).}
\label{alg:downsweep}
    \begin{algorithmic}[1]
        \Require Input $p:[n_1]\times \cdots \times [n_d]\rightarrow \mathbb{R}$. Sets of sketch functions $\{S_k^\bl\}_{k,l}$ and $\{T_k^\bl\}_{k,l}$. Rank at each level $\{r^\bl\}_l$.  

        \For {$l=1,\cdots,L$}
            \For {$k=1,\cdots,2^{l-1}$}
            \State Obtain $p_{i}^\bl$ in \eqref{eq:computePlk} with $T_{i}^{(l)}$ and $p$, $i=2k-1,2k$.
            \State Obtain $A_{i}^{(l)}$ in \eqref{eq:formulaA} with $S_{i}^{(l)}$ and $p_{i}^\bl$,  $i=2k-1,2k$.
            \State Obtain $B_k^{(l-1)}$ in \eqref{eq:formulaB} with $S_{2k-1}^{(l)}$, $S_{2k}^{(l)}$ and $p_k^{(l-1)}$.
            \State Trim $A_{2k-1}^\bl$ and $A_{2k}^\bl$ with rank $r^\bl$ and solve for $G_k^{(l-1)}$ in \eqref{eq:solveG} in terms of $C_k^{(l-1)}$  in \eqref{eq:computeC}.
            \EndFor
        \EndFor

        \State Obtain $C_k^{(L)}$ in \eqref{eq:computeCkL} for $k=1,\cdots,2^L$. 
        \State \textbf{Return} $d$-dimensional function represented in tensor-network  $\{\{C_k^\bl\}_{k=1}^{2^{l}}\}_{l=0}^{L}$ as in Figure~\ref{fig:trimming}(B).
        \par 

    \end{algorithmic}
\end{algorithm}
In practice, we apply this algorithm to empirical distribution $\hat{p}$, which is constructed from $N$ independent and identically distributed samples. So $\hat{p}$ has at most $N$ non-zero entries. We denote $\tilde{r}_{\texttt{max}}=\max_{l}\tilde{r}^{(l)}$ and $r_{\texttt{max}}=\max_{l}r^{(l)}$ for convenience where $\tilde{r}_{\texttt{max}}>r_{\texttt{max}}$. For convenience, we assume cluster sketching order $t=1$ in Section~\ref{section:sketching}. To calculate the complexity, we assume that evaluating a function on a single entry is $O(1)$. For each specific $k,l$, ($k\in [2^l], l\in [L]$)
\begin{enumerate}
    \item $S_i^{(l)},T_i^{(l)}$ can be computed in $O(\tilde{r}_{\texttt{max}}dN)$ time, $i=2k-1,2k$. 
    \item $p_i^{(l)}$ can be computed in $O(\tilde{r}_{\texttt{max}}N)$ time, $i=2k-1,2k$. 
    \item $A_i^{(l)}$ can be computed in $O(\tilde{r}_{\texttt{max}}^2N)$ time, $i=2k-1,2k$.
    \item $B_k^{(l-1)}$ can be computed in $O(\tilde{r}_{\texttt{max}}^3N)$ time. 
    \item $\mathcal{P}_{r^{(l)}}(A^{(l)}_i)$ can be computed in $O(\tilde{r}_{\texttt{max}}^2r_{\texttt{max}})$ time, $i=2k-1,2k$. 
    \item $G_k^{(l-1)}$ can be computed in $O(\tilde{r}_{\texttt{max}}^3r_{\texttt{max}}+\tilde{r}_{\texttt{max}}^2r_{\texttt{max}}^2)$ time. 
    \item $C_k^{(l-1)}$ can be computed in $O(\tilde{r}_{\texttt{max}}^3r_{\texttt{max}}+\tilde{r}_{\texttt{max}}^2r_{\texttt{max}}^2+\tilde{r}_{\texttt{max}}r_{\texttt{max}}^3)$ time. 
\end{enumerate}
The number of $k,l$ is at most $d\log_2{(d)}$ in total. Note that the computation of sketching function in the first step can be implemented in a parallel way. Therefore, the total complexity of this algorithm is 
$$O\left(d\log{(d)}\ (\tilde{r}_{\texttt{max}}^3N+  \tilde{r}_{\texttt{max}}^3r_{\texttt{max}})\right)$$ which depends linearly on $N$ and near linearly on $d$.

\section{Analysis of estimation error}\label{sec:error analysis}

In this section, we provide analysis regarding the estimation error. Denote $p^*$, $\hat{p}$, $\tilde{p}=\mathcal{A}(\hat p)$ as the ground truth distribution, empirical distribution in \eqref{eq:empirical_density} and the tensor-network obtained from Algorithm~\ref{alg:downsweep}. Recall that the total error is given by
\begin{equation}
    p^* - \mathcal{A}(\hat p) = p^* - \mathcal{A}(p^*) + \mathcal{A}(p^*) -  \mathcal{A}(\hat p),
\end{equation}
where $\mathcal{A}$ is the proposed Algorithm~\ref{alg:downsweep}. 

We apply the following blanket assumption throughout the section.
\begin{assumption}
We assume the ground truth distribution $p^*$ could be represented by a hierarchical tensor-network with rank $\{r^{(l)}\}^{L}_{l=1}$ and further, the assumption of Theorem~\ref{thm:sketchrange} is satisfied by $p=p^*$ with our choice of sketch functions. 
\end{assumption}
In this case, no approximation error is committed when applying Algorithm~\ref{alg:downsweep} to $p^*$, i.e. $\mathcal{A}(p^*) = p^*$. In practice, this may not be true, and we discuss such an error in Section~\ref{sec:numerical} via numerical experiments. With such an assumption, the total error could be simplified as $p^* - \mathcal{A}(\hat p) = p^*- \tilde p$, which is the estimation error caused by the sample variance in $\hat p$. Our goal is to prove the stability of $\|p^* - \tilde p\|_F$ in terms of $N$ and $d$.

We summarize the notations in this section:
\begin{enumerate}
    \item We denote $G_k^\bl\rightarrow G_k^{\bl *}$ if we input $p^*$ to $\mathcal{A}(\cdot)$, and $G_k^\bl\rightarrow {\hat G_k^\bl}$  if we input $\hat p$ to $\mathcal{A}(\cdot)$ where tensor core $G_k^\bl$ is defined in \eqref{eq:solveG}.
    \item We let $A_k^\bl\rightarrow {A_k^{\bl*}}$ if we input $p^*$ to $\mathcal{A}(\cdot)$, and $A_k^\bl\rightarrow {\hat A_k^\bl}$  if we input $\hat p$ to $\mathcal{A}(\cdot)$ where $A_k^\bl$ is defined in \eqref{eq:formulaA}.
    \item We let $B_k^\bl\rightarrow {B_k^{\bl*}}$ if we input $p^*$ to $\mathcal{A}(\cdot)$, and $B_k^\bl\rightarrow {\hat B_k^\bl}$  if we input $\hat p$ to $\mathcal{A}(\cdot)$ where $B_k^\bl$ is defined in \eqref{eq:formulaB}.
    \item We let $p_k^\bl\rightarrow {p_k^{\bl*}}$ when $p\rightarrow  p^*$  where $p_k^\bl$ is defined in     \eqref{eq:computePlk}. This encodes the ranges of various unfolding matrices of $p^*$. We also denote $\tilde{p}_k^\bl$ as estimated density at $k$-th node in $l$-th level, following the recursion level by level in \eqref{eq:recursion}:
    \begin{align}\label{eq:recursion}
    \tilde{p}_k^{(L)}(\ccI_k^{(L)},:)=\hat{p}(\ccI_k^{(L)},:)T_k^{(L)}, k\in[2^L], \nonumber\\
      \tilde{p}_k^{(l-1)} = \tilde{p}_{2k-1}^{(l)}\hat{G}_k^{(l-1)}\tilde{p}_{2k}^{(l)T}, k\in[2^{l-1}],l\in[L].
    \end{align}
    In the last level, $\tilde{p}_k^{(L)}(\ccI_k^{(L)},:)$ reflects the sketched range of the empirical distribution \\ $\hat{p}(\ccI_k^{(L)},\ccJ_k^{(L)})$, a good candidate of marginal density to recover the distribution based on this hierarchical structure. 

    \item We use $\ccP_{r^{(l)}}(A^{(l)*}_{k})$ and $\ccP_{r^{(l)}}(\hat{A}^{(l)}_{k})$ to represent the best rank-$r^{(l)}$ approximation matrix of $A^{(l)*}_{k} \in \bbRm{\tilde{r}^\bl}{\tilde{r}^\bl}$ and $\hat{A}^{(l)}_{k} \in \bbRm{\tilde{r}^\bl}{\tilde{r}^\bl}$, respectively. We recall notation $\tilde{r}_{\texttt{max}}=\max_{l}\tilde{r}^{(l)}$.
    \item We use $\epsilon_{A}$ to represent the maximum of the following Frobenius norm:$\epsilon_{A}=\max_{k,l}\|\hat{A}_k^\bl - A_k^{\bl*} \|_F$.
    \item We use $c_{A^\dagger}, c_{\hat{A}^\dagger}, c_S, c_{S^\dagger},c_p,c_{\tilde{p}}$ as the upper bounds of the following terms:
    \begin{align}\label{eq:constant def}    &\|\left(A_k^{\bl*}\right)^\dagger\|_2\leq c_{A^\dagger},\|\left(\ccP_{r^{(l)}}(\hat{A}^{(l)}_k)\right)^\dagger\|_2 \leq c_{\hat{A}^\dagger}, \nonumber\\ 
    &\|S_k^\bl\|_2\leq c_S, \|\left(S_k^\bl\right)^\dagger\|_2\leq c_{S^\dagger},\|p_k^{(l)*}\|_F \leq c_p, \|\tilde{p}_k^{(l)}\|_F \leq c_{\tilde{p}} , k\in[2^l],l\in [L].
     \end{align}
     We also use $c_p$ as an upper bound in the following: $c_p\geq \|p_1^{(0)*}\|_F=\|p^*\|_F$.
\end{enumerate}
Here we summarize our main results in the following theorem:
\begin{theorem}\label{thm:main}
Suppose Assumption~\ref{assumption:range} holds and sketches are orthogonal, i.e. for each $k,l$, $\langle S^\bl_k(:,i), S^\bl_k(:,j)\rangle =\delta_{ij}$, $\langle T^\bl_k(:,i), T^\bl_k(:,j)\rangle =\delta_{ij}$, $i,j\in [\tilde{r}^\bl]$. Then for any $0<\delta<1$, when $c^2_{A^\dagger}c_pc_{\tilde{p}}+c^2_{A^\dagger}c^2_p>1$, the following inequality holds with probability at least $1-\delta$:
\begin{equation}
\|\tilde{p}-p^*\|_F \leq 
\tilde{C}(c^2_{A^\dagger}c_pc_{\tilde{p}}+c^2_{A^\dagger}c^2_p)^{\log_2{d}}\frac{\log(\frac{2\tilde{r}_{\texttt{max}}d\log_2{d}}{\delta})}{\sqrt{N}}
\end{equation}
where $\tilde{C}=3\sqrt{\tilde{r}_{\texttt{max}}}\left(1+\frac{c^2_{\tilde{p}}(c^2_{\hat{A}^\dagger}   + 6c_{A^\dagger}c^2_{\hat{A}^\dagger}c_p + 6c^2_{A^\dagger}c_{\hat{A}^\dagger}c_p)}{c^2_{A^\dagger}c_pc_{\tilde{p}}+c^2_{A^\dagger}c^2_p-1} \right)$.
\end{theorem}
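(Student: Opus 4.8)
The plan is to split the argument into a deterministic perturbation analysis and a probabilistic concentration step, and then combine them. Write $E_k^\bl := \tilde{p}_k^\bl - p_k^{\bl*}$ for the node-wise error, so that the object of interest is $\|\tilde{p}-p^*\|_F = \|E_1^{(0)}\|_F$. The overall strategy is to propagate $E_k^\bl$ up the hierarchy using the recursion \eqref{eq:recursion}, thereby reducing the global error to two kinds of local quantities: the leaf errors $E_k^\bL$ and the core perturbations $\hat{G}_k^\blm - G_k^{\blm*}$, and finally to bound both of these by the single statistical quantity $\epsilon_A$ (together with the analogous deviation of $B$).

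First I would establish the recursive inequality. Subtracting $p_k^\blm = p_{2k-1}^{\bl*} G_k^{\blm*} p_{2k}^{\bl*T}$ from $\tilde{p}_k^\blm = \tilde{p}_{2k-1}^\bl \hat{G}_k^\blm \tilde{p}_{2k}^{\bl T}$ and inserting telescoping terms gives a product-rule decomposition into three pieces, so that, using the bounds in \eqref{eq:constant def},
$$\|E_k^\blm\|_F \le \|\hat{G}_k^\blm\|_2\, c_{\tilde{p}}\, \|E_{2k-1}^\bl\|_F + c_p\, \|G_k^{\blm*}\|_2\, \|E_{2k}^\bl\|_F + c_p\, c_{\tilde{p}}\, \|\hat{G}_k^\blm - G_k^{\blm*}\|_2.$$
From the solution formula \eqref{eq:solveG} and orthogonality of the sketches (so $\|B\|_2 \le c_p$), the core norms are controlled by $\|G^{*}\|_2 \le c_{A^\dagger}^2 c_p$ and $\|\hat{G}\|_2 \le c_{\hat{A}^\dagger}^2 c_p$, which identifies the per-level amplification factor $\alpha = c_{A^\dagger}^2 c_p c_{\tilde{p}} + c_{A^\dagger}^2 c_p^2$. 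Unrolling the recursion over the $L=\log_2 d$ levels then yields $\alpha^{\log_2 d}$ multiplying the leaf error, plus a geometric sum of the form $\sum_l \alpha^l\, c_p c_{\tilde{p}}\, \max_k \|\hat{G}_k - G_k^{*}\|_2$ collecting the core contributions; the hypothesis $\alpha>1$ is precisely what lets me close this sum as a constant multiple of $\frac{\alpha^{\log_2 d}}{\alpha-1}$, producing the $(\alpha-1)^{-1}$ that appears in $\tilde{C}$.

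Next I would bound the core perturbation. Writing $G = (\ccP_{r^\bl}(A_{2k-1}))^\dagger B (\ccP_{r^\bl}(A_{2k})^T)^\dagger$ and telescoping over the two pseudo-inverses and the middle factor $B$, each difference of pseudo-inverses is controlled by the equal-rank perturbation bound $\|X^\dagger - Y^\dagger\|_2 \lesssim \|X^\dagger\|_2\|Y^\dagger\|_2\|X-Y\|_2$; here Proposition~\ref{prop:trim_rankr} guarantees $\ccP_{r^\bl}(A^{*}) = A^{*}$ has exactly rank $r^\bl$, so the ranks match and Weyl's inequality gives $\|\ccP_{r^\bl}(\hat{A}) - A^{*}\|_2 \le 2\epsilon_A$. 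Collecting the three pieces bounds $\|\hat{G}-G^{*}\|_2$ by a constant multiple of $\epsilon_A$ (and the deviation of $B$) with the coefficient $c_{\hat{A}^\dagger}^2 + 6 c_{A^\dagger} c_{\hat{A}^\dagger}^2 c_p + 6 c_{A^\dagger}^2 c_{\hat{A}^\dagger} c_p$ that surfaces in $\tilde{C}$. The leaf errors $E_k^\bL = (\hat{p}-p^*)(\ccI_k^\bL,:) T_k^\bL$ are, exactly like $\hat{A}-A^{*}$ and $\hat{B}-B^{*}$ from \eqref{eq:formulaA}--\eqref{eq:formulaB}, sketched empirical moments of the form $\frac1N\sum_i f(\vy^i) - \E f$. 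For the statistical step, since the cluster-basis sketches are products of the bounded orthonormal $\{\phi_i\}$, each such entry is an average of i.i.d.\ bounded terms, so a Hoeffding/Bernstein inequality controls it; a union bound over the $O(\tilde{r}_{\texttt{max}} d\log_2 d)$ scalar moments yields $\epsilon_A \lesssim \frac{\log(2\tilde{r}_{\texttt{max}} d\log_2 d/\delta)}{\sqrt{N}}$ with probability at least $1-\delta$, with the $\sqrt{\tilde{r}_{\texttt{max}}}$ in $\tilde{C}$ coming from passing to the Frobenius norm over entries. Substituting this rate and the core-perturbation bound into the unrolled recursion and collecting constants gives the stated inequality.

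The step I expect to be the main obstacle is the deterministic recursion coupled to the pseudo-inverse perturbation: keeping the amplification constants sharp enough that the per-level factor is exactly $\alpha$, ensuring that the truncated matrices $\ccP_{r^\bl}(\hat{A})$ retain rank $r^\bl$ and stay well conditioned (so that $c_{\hat{A}^\dagger}$ is finite, which implicitly requires $\epsilon_A$ below the $r^\bl$-th singular value of $A^{*}$), and verifying that the accumulated lower-order terms really collapse into the single geometric factor $\frac{1}{\alpha-1}$ rather than contributing additional powers of $d$.
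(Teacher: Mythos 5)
Your proposal follows essentially the same route as the paper's proof: the same three-part architecture of (i) core perturbation $\|\hat G_k^\blm-G_k^{\blm*}\|$ via the equal-rank pseudo-inverse bound plus Proposition~\ref{prop:trim_rankr} and the best rank-$r^\bl$ property of $\ccP_{r^{(l)}}$, (ii) a telescoping recursion up the hierarchy closed into a geometric factor using $\alpha>1$, and (iii) Bernstein-type concentration of the sketched moments with a union bound over the $O(d\log_2 d)$ nodes --- which is precisely the content of the paper's Lemma~\ref{lemma:errorG}, Lemma~\ref{lemma:error_p}, and Lemma~\ref{lemma:epsilon_A}, assembled exactly as in the paper.

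Two bookkeeping slips, however, prevent your argument as written from delivering the exact constants in the statement. First, your telescoping inserts $p^*\hat G\tilde p$-type intermediate terms, so the coefficient of $\|E_{2k-1}^\bl\|_F$ is $\|\hat G_k^\blm\|_2\,c_{\tilde p}\le c_{\hat A^\dagger}^2(c_p+\epsilon_A)\,c_{\tilde p}$; the amplification factor you then announce, $\alpha=c_{A^\dagger}^2c_pc_{\tilde p}+c_{A^\dagger}^2c_p^2$, does not follow from your own displayed inequality (it has $c_{\hat A^\dagger}$ where the theorem has $c_{A^\dagger}$, your difference term carries $c_pc_{\tilde p}$ rather than $c_{\tilde p}^2$, and you additionally need a bound on $\|\hat B\|_2$, which is only $c_p+\epsilon_A$, not $c_p$). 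The paper's ordering --- inserting $\tilde p_{2k-1}G^*\tilde p_{2k}^T$ and $p^*_{2k-1}G^*\tilde p_{2k}^T$, so that both $E$-terms are multiplied by $G^{*}$ and the $\hat G-G^*$ term is sandwiched between two $\tilde p$'s --- yields exactly the stated $\alpha$ and the $c_{\tilde p}^2\kappa_G$ numerator, and has the further advantage of never requiring any bound on $\|\hat G\|_2$ or $\|\hat B\|_2$. Second, your statistical step runs scalar Hoeffding/Bernstein entrywise: each $A_k^\bl$ has $(\tilde r^\bl)^2$ entries, so an entrywise deviation bound converts to Frobenius norm at cost $\tilde r_{\texttt{max}}$, not $\sqrt{\tilde r_{\texttt{max}}}$ (and your union bound should be over $O(\tilde r_{\texttt{max}}^2 d\log_2 d)$ scalars, not $O(\tilde r_{\texttt{max}} d\log_2 d)$). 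To obtain the stated prefactor $3\sqrt{\tilde r_{\texttt{max}}}$ one must, as the paper does, apply the matrix Bernstein inequality (Corollary~\ref{cor:bernstein}) to $\hat A_k^\bl=\frac1N\sum_j S_k^\bl(\vy^j_{\ccC_k^\bl},:)^T\,T_k^\bl(\vy^j_{[d]\backslash\ccC_k^\bl},:)$ at the level of the spectral norm, and only then pay the factor $\sqrt{\tilde r^\bl}$ for the Frobenius conversion. Both slips affect only constants, not the $N^{-1/2}$ rate or the $d$-dependence, and are repairable within your framework; your observation that Wedin's equal-rank bound implicitly requires $\ccP_{r^{(l)}}(\hat A_k^\bl)$ to stay well conditioned (i.e.\ $c_{\hat A^\dagger}$ finite) is correct and is exactly what the paper assumes through \eqref{eq:constant def}.
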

\begin{proof}
By assembling Lemma\ref{lemma:errorG} and Lemma\ref{lemma:error_p} in \ref{sec:mainproof}, we have 
\begin{equation}\label{eq:main_proof}
\|\tilde{p}-p^*\|_F \leq \kappa_{p^{(0)}}\epsilon_A=
(c^2_{A^\dagger}c_pc_{\tilde{p}}+c^2_{A^\dagger}c^2_p)^{\log_2{d}}\left(1+\frac{c^2_{\tilde{p}}\kappa_{G}}{c^2_{A^\dagger}c_pc_{\tilde{p}}+c^2_{A^\dagger}c^2_p-1} \right)\epsilon_A,
\end{equation}
where $\kappa_G=c^2_{\hat{A}^\dagger}   + 6c_{A^\dagger}c^2_{\hat{A}^\dagger}c_p + 6c^2_{A^\dagger}c_{\hat{A}^\dagger}c_p$. By plugging upper bound $\epsilon_A$ from Lemma \ref{lemma:epsilon_A} into \eqref{eq:main_proof}, we have the following inequality holds with probability at least $1-\delta$:

\begin{equation}
\|\tilde{p}-p^*\|_F \leq 
\tilde{C}(c^2_{A^\dagger}c_pc_{\tilde{p}}+c^2_{A^\dagger}c^2_p)^{\log_2{d}}\frac{\log(\frac{2\tilde{r}_{\texttt{max}}d\log_2{d}}{\delta})}{\sqrt{N}},
\end{equation}
where $\tilde{C}=3\sqrt{\tilde{r}_{\texttt{max}}}\left(1+\frac{c^2_{\tilde{p}}(c^2_{\hat{A}^\dagger}   + 6c_{A^\dagger}c^2_{\hat{A}^\dagger}c_p + 6c^2_{A^\dagger}c_{\hat{A}^\dagger}c_p)}{c^2_{A^\dagger}c_pc_{\tilde{p}}+c^2_{A^\dagger}c^2_p-1} \right)$.
\end{proof}

Inspired by \cite{qin2022error}, the theorem is aimed to bound the error in terms of the Frobenius norm. In the following parts, we would provide two remarks about this theorem. Next we show some preliminaries in  \ref{sec:Preliminaries} and present the components including Lemma~\ref{lemma:errorG}, Lemma~\ref{lemma:error_p}, and Lemma~\ref{lemma:epsilon_A} in \ref{sec:mainproof}. 
\begin{remark}
We remark that the assumption $c^2_{A^\dagger}c_pc_{\tilde{p}}+c^2_{A^\dagger}c^2_p>1$ can always be satisfied by a sufficient large choice of the constant $c_{A^\dagger}$.
\end{remark}

\begin{remark}
Furthermore, based on the fact that $p^*$ should be density, we could have bounds for $c_p$ to simplify the result. More specifically, the sum of the absolute value of entries of $p^*$ are $1$ since $p^*$ is a density. Thus $\|p_k^{(l)*}\|_F$ has the following upper bound: $\|p_k^{(l)*}\|_F \leq \|p^*T_k^\bl\|_F\leq \|p^*\|_F\|T_k^\bl\|_2\leq 1$ for every $k,l$ and therefore $c_p\leq 1$. 
 Besides, if $c_{\tilde{p}}\leq 1$ also holds, the  simplified inequality holds with probability at least $1-\delta$:
\begin{equation}
\|\tilde{p}-p^*\|_F \leq 
\tilde{C}'d(c_{A^\dagger})^{2\log_2{d}}\frac{\log(\frac{2\tilde{r}_{\texttt{max}}d\log_2{d}}{\delta})}{\sqrt{N}},
\end{equation}
where $\tilde{C}'=3\sqrt{\tilde{r}_{\texttt{max}}}\left(1+\frac{c^2_{\hat{A}^\dagger}   + 6c_{A^\dagger}c^2_{\hat{A}^\dagger} + 6c^2_{A^\dagger}c_{\hat{A}^\dagger}}{2c^2_{A^\dagger}-1} \right)$.\par 
If $c_p\leq 1 \leq c_{\tilde{p}}$, then the following inequality holds with probability at least $1-\delta$:
\begin{equation}
\|\tilde{p}-p^*\|_F \leq 
\tilde{C}''d(c_{A^\dagger}c_{\tilde{p}})^{2\log_2{d}}\frac{\log(\frac{2\tilde{r}_{\texttt{max}}d\log_2{d}}{\delta})}{\sqrt{N}},
\end{equation}
where $\tilde{C}''=3\sqrt{\tilde{r}_{\texttt{max}}}\left(1+\frac{c_{\tilde{p}}^2(c^2_{\hat{A}^\dagger}   + 6c_{A^\dagger}c^2_{\hat{A}^\dagger} + 6c^2_{A^\dagger}c_{\hat{A}^\dagger})}{2c^2_{A^\dagger}c_{\tilde{p}}^2-1} \right)$.\par

\end{remark}

\section{Numerical results}\label{sec:numerical}
In this section, we demonstrate the success of the algorithm on various one-dimensional and two-dimensional Ising models. \footnote{The implementation of hierarchical tensor sketching can be found at \url{https://github.com/IvanPeng0414/Hierarchical-Tensor-Sketching}.}  We recall notation $p^*$, $\hat{p}$, $\tilde{p}=\mathcal{A}(\hat p)$ as the ground-truth distribution, the empirical distribution and the tensor-network represented distribution, respectively. 
We can compute the relative error with respect to $p^*$:
$$\epsilon_p=\frac{\|\tilde{p}-p^*\|_F}{\|p^*\|_F}$$

For the choice of sketch function in Section~\ref{section:sketching}, we are concerned with Ising-like models where $n=2$. 

\subsection{One-dimensional Ising model}\par 
\subsubsection{Ferromagnetic Ising model}
We consider the following generalization of the one-dimensional Ising model. Define $p^*:\{-1,1\}^d\rightarrow \mathbb{R}$ by
$$p^*(x_1,\cdots,x_d) \propto \exp(-\beta \sum_{i,j=1}^d J_{ij}x_ix_j)$$
where $\beta>0$ reflects the inverse of temperature in this model and the interaction coefficient $J_{ij}$ is given by
$$J_{ij}=
\begin{cases}
-1/2, & |i-j|=1 \\
-1/6, & |i-j|=2 \\
0, & \texttt{otherwise}
\end{cases}$$
giving interactions between both nearest and next-nearest neighbors. We first investigate the error with respect to the number of samples ($N$) and show the resulting error in Figure~\ref{fig:error_1d_ferro} for 
various temperatures. A typical distribution is visualized in Figure~\ref{fig:empirical_1d_antiferro}(A). The distribution concentrates on two states, where one of them has entirely positive signs and the other has all negative signs.

\begin{figure}[!htb]
    \centering  {\includegraphics[width=0.6\textwidth]{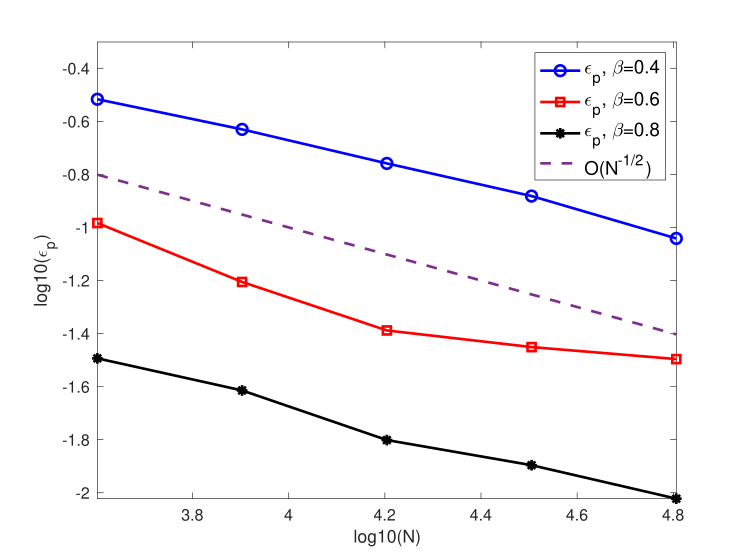}}
    \caption{Error for next neighbor one-dimensional Ising model with $d=16$. Blue, red, and black curves represent cases with $\beta=0.4$, $\beta=0.6$, and $\beta=0.8$, respectively. Dashed curve reflects a reference curve $O(N^{-1/2})$. }\label{fig:error_1d_ferro}
\end{figure}

Based on the results obtained, it can be observed that the error decreases as  $\frac{1}{\sqrt{N}}$, which satisfies the Monte-Carlo rate. Additionally, it is noticeable that the error is smaller in the case of lower temperature examples. This can be explained by the fact that the distribution is more concentrated at lower temperatures, giving less variance. Additionally, we plot the error v.s. $d$ in Figure~\ref{fig:error_changed_ferro} with temperature $\beta=0.6$ and  $N=64000$ empirical samples and observe a mild growth with the dimensionality $d$.

\par

\begin{figure}[!htb]
    \centering
    {\includegraphics[width=0.6\textwidth]{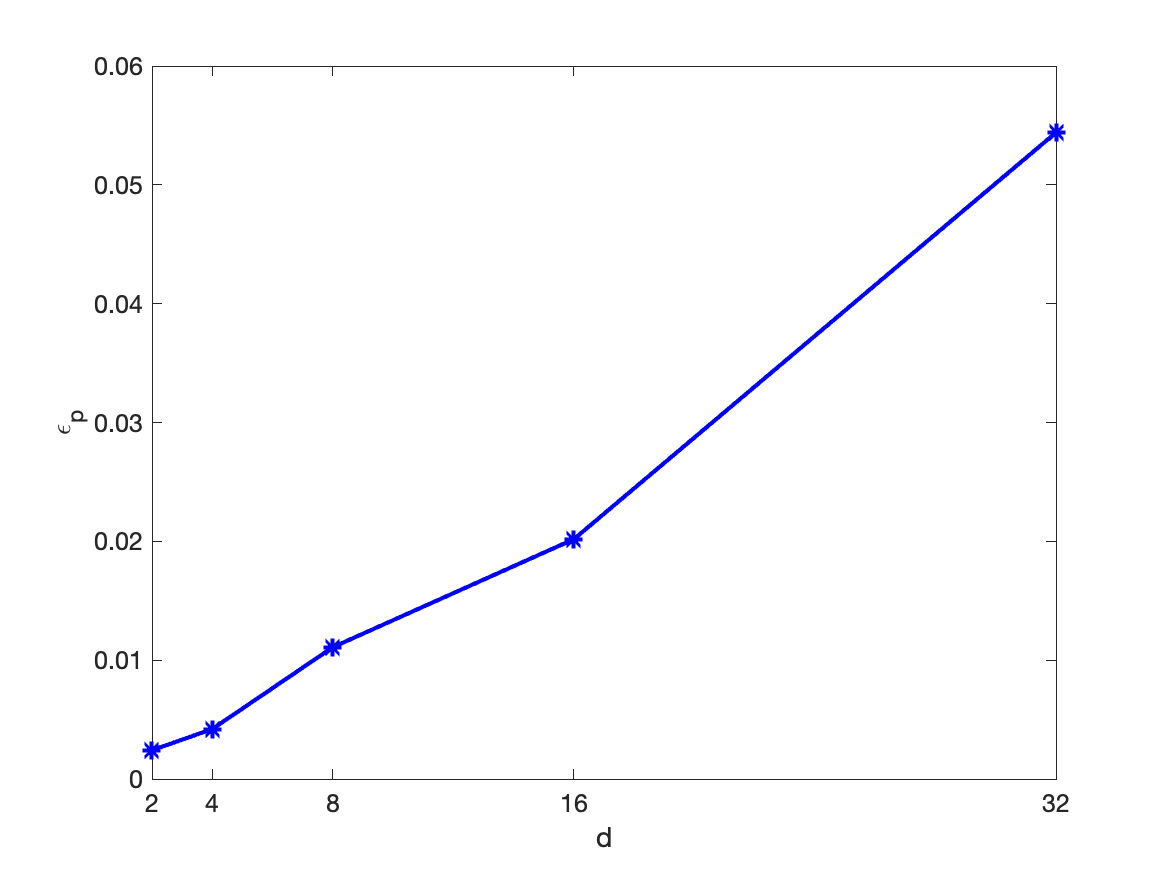}}
    \caption{Change of error with respect to $d$ in one-dimensional ferromagnetic Ising model with $\beta=0.6$.  }\label{fig:error_changed_ferro}
\end{figure}

\subsubsection{Antiferromagnetic Ising model}
Another important case is the antiferromagnetic example, where the ground-truth distribution has the same form
$$p^*(x_1,\cdots,x_d) \propto \exp(-\beta \sum_{i,j=1}^d J_{ij}x_ix_j)$$
but the interaction coefficients $J_{ij}$ are positive valued:
$$J_{ij}=
\begin{cases}
1/2, & |i-j|=1 \\
1/6, & |i-j|=2 \\
0, & \texttt{otherwise}
\end{cases}$$

Figure~\ref{fig:error_1d_antiferro} showcases the results obtained by hierarchical tensor network. Again, the error decreases in terms of $N$ at a rate of $1/\sqrt{N}$. However, when compared to the ferromagnetic Ising model, the error is larger. To gain a better understanding of this difference, we could visualize the empirical distribution. In Figure~\ref{fig:empirical_1d_antiferro}, it is evident from the antiferromagnetic type distribution that it is less concentrated and has more small local peaks compared to the ferromagnetic example at the same temperature, thus more samples and more detailed sketches are required to accurately recover the antiferromagnetic case.
Additionally,  the distribution of antiferromagnetic Ising model concentrates on two states, each of which have mixed $+1$ and $-1$ variables and are visualized in Figure~\ref{fig:empirical_1d_antiferro}. Furthermore, states around the two main states occur with non-negligible probability. This could explain why the error of antiferromagnetic Ising model would be larger than the one in ferromagnetic Ising model.

\begin{figure}[!htb]
    \centering  {\includegraphics[width=0.6\textwidth]{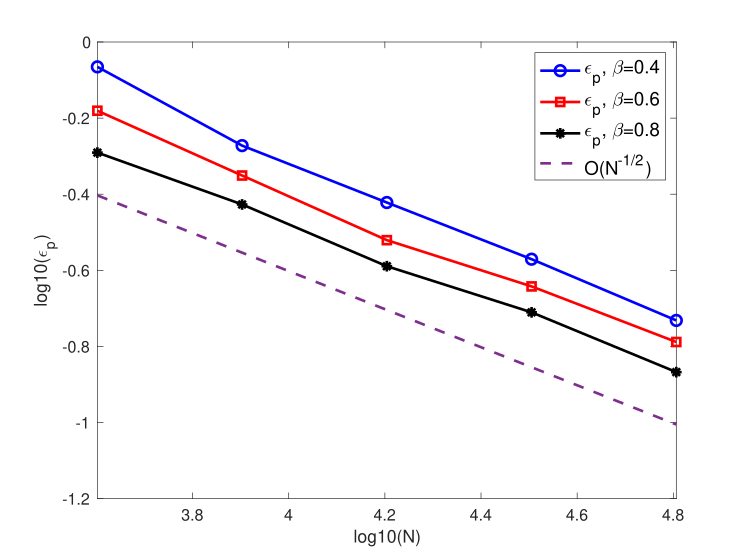}}
    \caption{Error for next neighbor antiferromagnetic Ising model with $d=16$. Blue, red, and black curves represent cases with $\beta=0.4$, $\beta=0.6$, and $\beta=0.8$, respectively. Dashed curve reflects a reference curve $O(N^{-1/2})$.}\label{fig:error_1d_antiferro}
\end{figure}

\begin{figure}[!htb]
    \centering
    \subfloat[Ferromagnetic Ising model.]{{\includegraphics[width=0.48\textwidth]{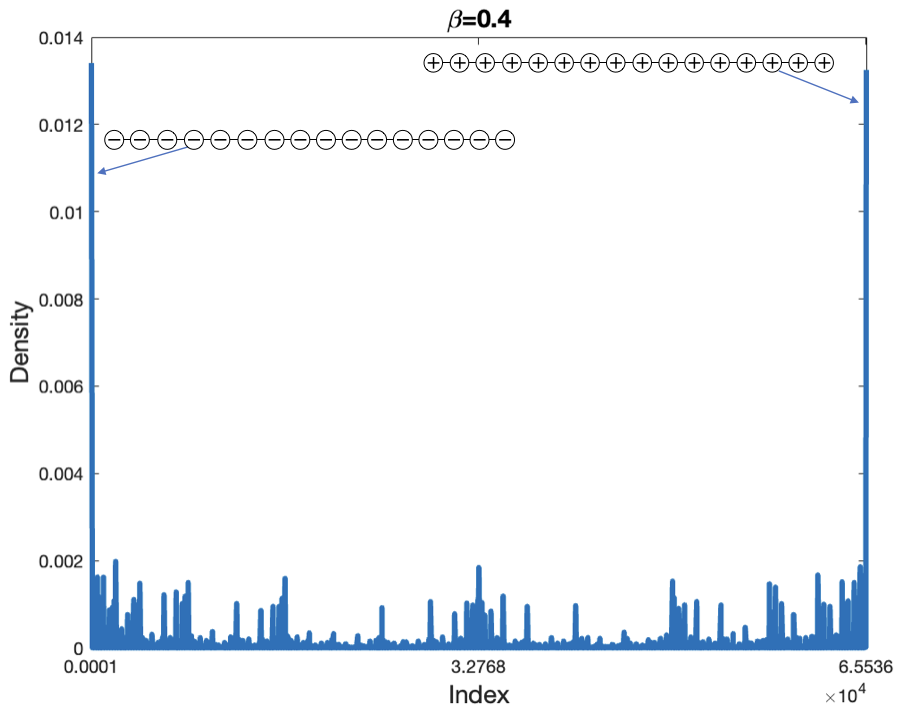}}}
    \quad 
    \subfloat[Antiferromagnetic Ising model.]
    {{\includegraphics[width=0.48\textwidth]{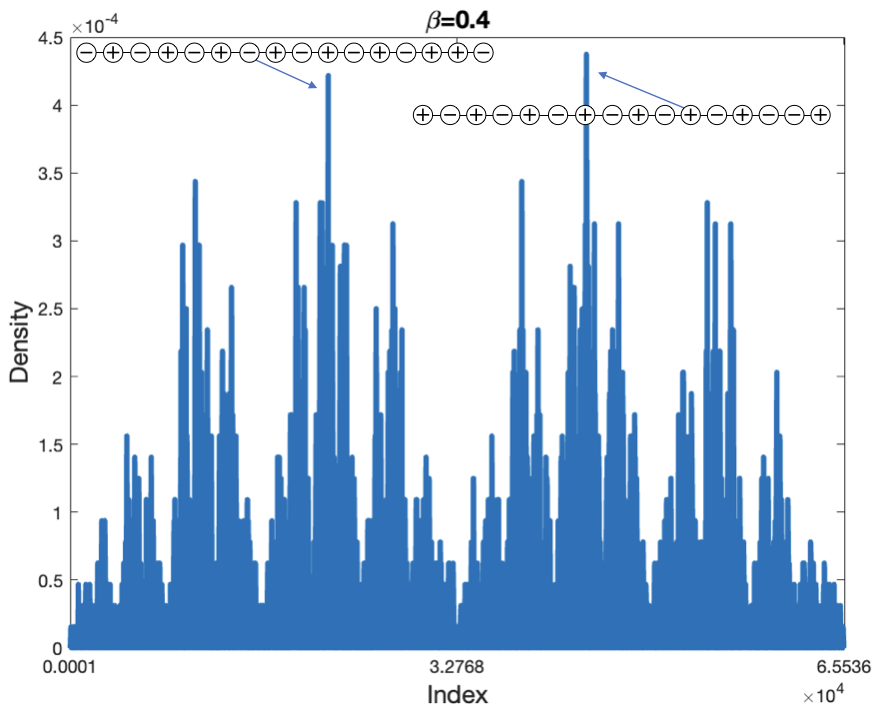}}}
    \caption{The empirical distribution with $N=64000$, $d=16$ and $\beta=0.4$ of ferromagnetic Ising model (Left) and antiferromagnetic Ising model (Right). Four representative states are included in the figure. "$+$" and "$-$" are corresponding to $x_i=+1$ and $x_i=-1$, respectively.   }\label{fig:empirical_1d_antiferro}   
\end{figure}

\subsection{Two-dimensional Ising model}
The next example is two-dimensional Ising model.  Here we only consider the interaction between nearest neighbor. The probability distribution is given by the following:
$$p^*(\{x_{i,j}\}_{i,j=1,\cdots,d})\propto \exp\left(-\beta 
\sum_{i=1}^{d}\sum_{j=1}^d(Jx_{i,j}x_{i+1,j}+Jx_{i,j}x_{i,j+1})\right)$$
where $\beta>0$. We consider periodic boundary condition in the problem, which means $x_{i,d+1}=x_{i,1}$ and $x_{d+1,j}=x_{1,j}$ for $i,j=1,\cdots,d$.  \par 

\subsubsection{Ferromagnetic Ising model}
For the ferromagnetic Ising model we take $J=-1$. The left figure in Figure~\ref{fig:2d_result_ferro} displays results for different numbers of samples at different temperatures, for a $4\times 4$ Ising model, while the right figure gives the results for an $8\times 8$ system. Similar to the one-dimensional Ising model, the error decreases in terms of $N$ at a rate of $1/\sqrt{N}$. For two-dimensional ferromagnetic Ising model, we visualize five typical samples in example $\beta=0.4$ of $4\times 4$ domain. In Figure~\ref{fig:visualize_2d_ferro}, we find that samples from the distribution would concentrate on two states: one state has all positive signs and the other has all negative signs.

\begin{figure}[!htb]
    \centering
    \subfloat[$4\times 4$ domain.]{{\includegraphics[width=0.48\textwidth]{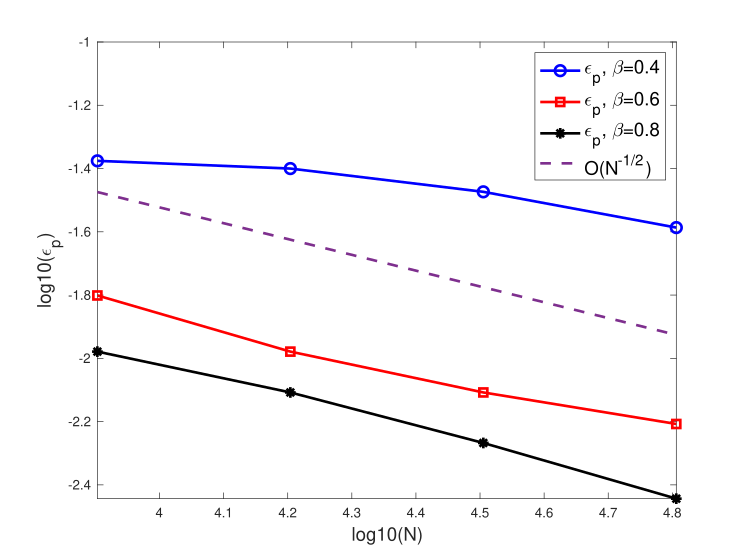}}}
    \quad 
    \subfloat[$8\times 8$ domain.]
    {{\includegraphics[width=0.48\textwidth]{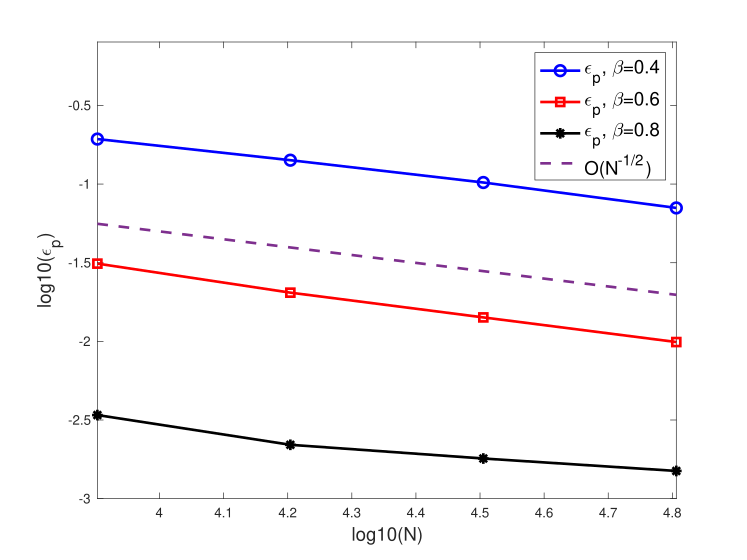}}}
    \caption{Error of nearest neighbor Ising model in several temperature scenarios. Blue, red, and black curves represent cases with $\beta=0.4$, $\beta=0.6$, and $\beta=0.8$, respectively. Dashed curve reflects a reference curve $O(N^{-1/2})$. Left: $4\times 4$ domain; Right: $8\times 8$ domain. }\label{fig:2d_result_ferro}   
\end{figure}

\begin{figure}[!htb]
    \centering
    {\includegraphics[width=1.0\textwidth]{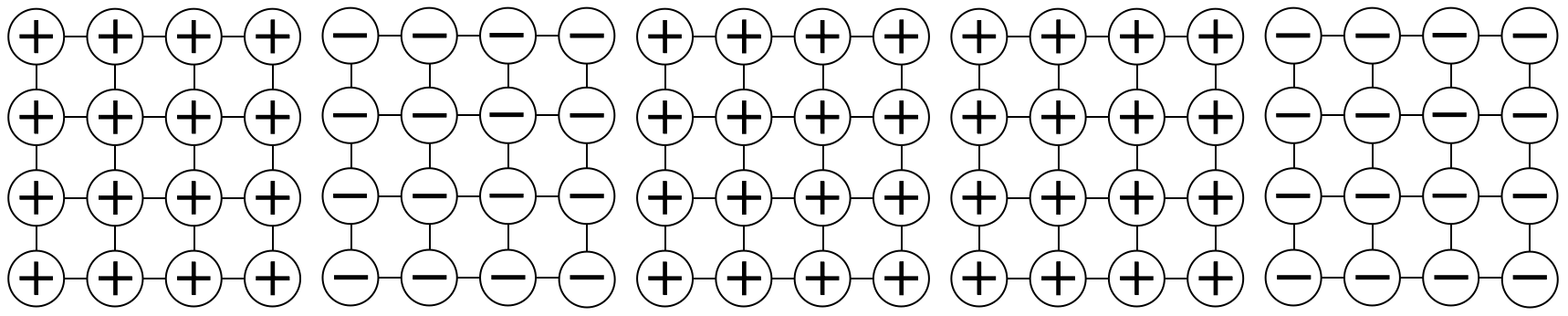}}
    \caption{Visualization of five samples in $4\times 4$ ferromagnetic Ising model with $\beta=0.4$. Here "$+$" represents $x_{i,j}=+1$ and "$-$" represents $x_{i,j}=-1$ for $i,j=1,\cdots,4$. }\label{fig:visualize_2d_ferro}
\end{figure}

\subsubsection{Antiferromagnetic Ising model}
In this section, we study the antiferromagnetic two-dimensional Ising model. The probability distribution can be expressed as:
$$p^*(\{x_{i,j}\}_{i,j=1,\cdots,d})\propto \exp\left(-\beta 
\sum_{i=1}^{d}\sum_{j=1}^d(Jx_{i,j}x_{i+1,j}+Jx_{i,j}x_{i,j+1})\right)$$
with $J=1$. In Figure~\ref{fig:visualize_2d_antiferro}, five typical samples from this type of distribution are demonstrated, which shows oscillatory sign pattern. This stands in contrast to the ferromagnetic Ising model. Figure~\ref{fig:2d_result_antiferro} presents the results obtained in both the $4\times 4$ and $8\times 8$ models. The algorithm exhibits decent accuracy with a sufficient number of samples. 

\begin{figure}[!htb]
    \centering
    {\includegraphics[width=1.0\textwidth]{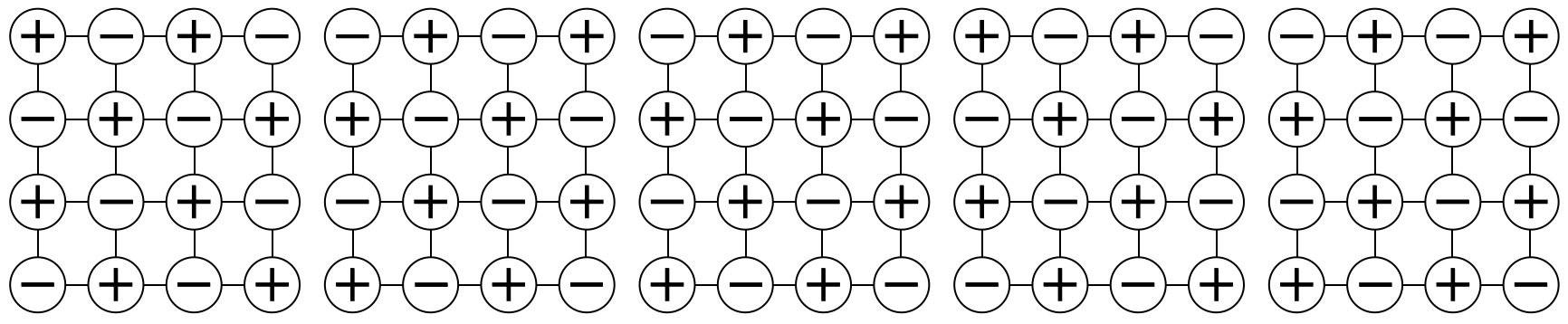}}
    \caption{Visualization of five samples in $4\times 4$ antiferromagnetic Ising model with $\beta=0.4$. Here "$+$" represents $x_{i,j}=+1$ and "$-$" represents $x_{i,j}=-1$ for $i,j=1,\cdots,4$. }\label{fig:visualize_2d_antiferro}
\end{figure}

\begin{figure}[!htb]
    \centering
    \subfloat[$4\times 4$ domain.]{{\includegraphics[width=0.48\textwidth]{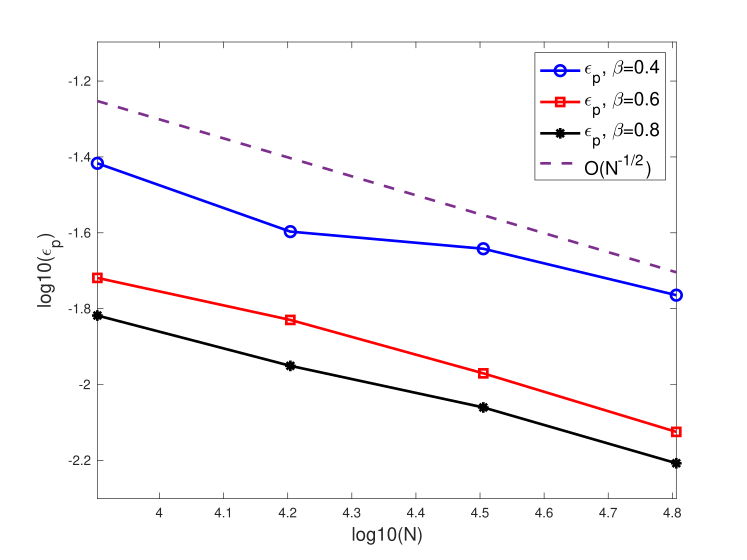}}}
    \quad 
    \subfloat[$8\times 8$ domain.]
    {{\includegraphics[width=0.48\textwidth]{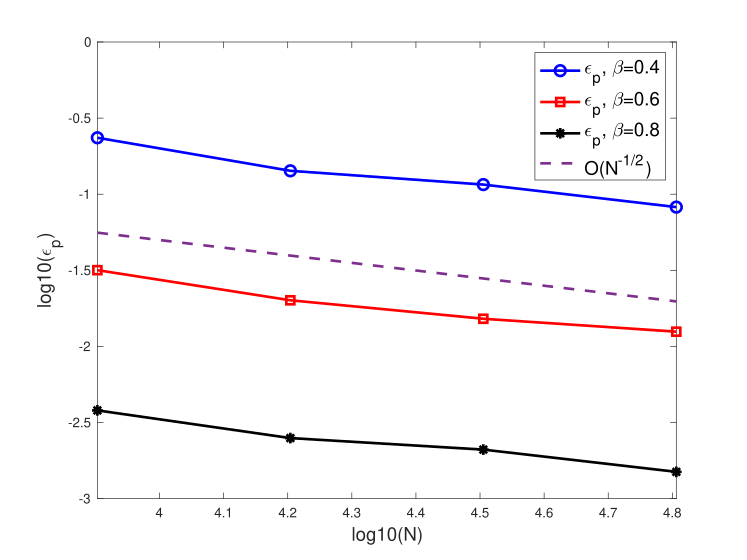}}}
    \caption{Error of nearest neighbor antiferromagnetic Ising model with several temperatures. Blue, red, and black curves represent cases with $\beta=0.4$, $\beta=0.6$, and $\beta=0.8$, respectively. Dashed curve reflects a reference curve $O(N^{-1/2})$. Left: $4\times 4$ domain; Right: $8\times 8$ domain. }\label{fig:2d_result_antiferro}   
\end{figure}

\subsubsection{Choice of rank}\label{sec: rank truncation}
The choice of rank in this algorithm is crucial as it directly affects the model's complexity, i.e., the number of parameters in the model. Like many machine learning methods, there is a trade-off when selecting the rank, the hyperparameters in this model: If the rank is too small, the model may not be able to fit the distribution well due to a lack of representation power. On the other hand, if the rank is too large, it may lead to overfitting  which results in a large estimation error.

To illustrate the impact of rank, we consider a $4\times 4$ ferromagnetic Ising model with $\beta=0.2$. Our first experiment is to study the effect of rank at the top level $r^{(0)}_1$ (rank of $G^{(0)}_1$) on the estimation error with a fixed number of samples $N$. We compare the  error  $\epsilon_p = \|\tilde p-p^*\|_F/\|p^*\|_F$ with the relative approximation error $\epsilon_{{\texttt{approx}}}=\|\ccA(p^*)-p^*\|_F/\|p^*\|_F$, i.e. the error when we replace $\hat p$ with the ground truth $p^*$ when applying $\mathcal{A}(\cdot)$. In Figure~\ref{fig:2d_d4_beta01_rank}, we show that the best rank that achieves the smallest error $\epsilon_p$ for each $N$, increases as $N$ increases. When $N$ is large, the error of $\ccA(\hat p)$ is converging to the approximation error $\epsilon_{{\texttt{approx}}}$. We can explain this trend as follows: when $N$ is small, by introducing a bias error via using a smaller rank, we can reduce the variance in estimation. In contrast, with large number of samples, we can afford to use a large rank to reduce the approximation error.
\begin{figure}[!htb]
    \centering   \includegraphics[width=0.6\textwidth]{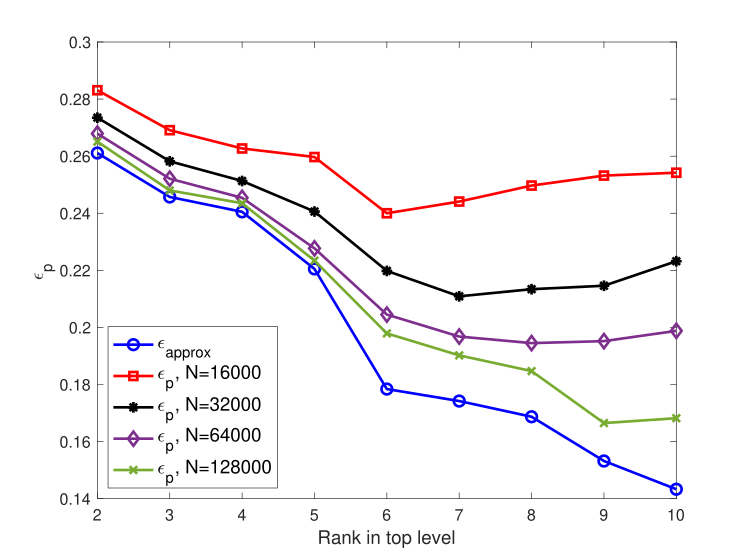} \quad
    \caption{The influence of truncated rank in top level on error in $4\times 4$ ferromagnetic Ising model with $\beta=0.2$. The approximation error $\epsilon_{{\texttt{approx}}}$ is shown in the line with circle marker; errors of empirical distribution with $N=16000$, $N=32000$, $N=64000$ and $N=128000$ are shown in line with square, star, diamond and cross marker, respectively. }\label{fig:2d_d4_beta01_rank}
\end{figure}

Our second experiment analyzes how the error changes with respect to $N$ for fixed ranks at the top level. Using the same example as the first test, we depict the change in the total error $\epsilon_p$ with respect to the number of samples for two choices of rank, $6$ and $10$, respectively, in Figure~\ref{fig:2d_d4_beta02_rank_N}. It is apparent that the relative approximation error is smaller with $\text{rank}=10$ than with $\text{rank}=6$. However, when we compute the estimation error ($\epsilon_p - \epsilon_{\texttt{approx}}$), the case of $\text{rank}=6$ shows faster convergence. We can explain this observation using the bias-variance trade-off. The bias error is reflected by $\epsilon_{{\texttt{approx}}}$, which depends solely on the model complexity and problem difficulty. In this figure, although $\text{rank}=10$ provides smaller bias error, the total error is larger than the $\text{rank}=6$ case when number of samples is small. This again shows that with limited number of samples, one might want to commit a bias error to reduce the variance in the estimator $\tilde{p} =\mathcal{A}(\hat{p})$.

\begin{figure}[!htb]
    \centering   \includegraphics[width=0.6\textwidth]{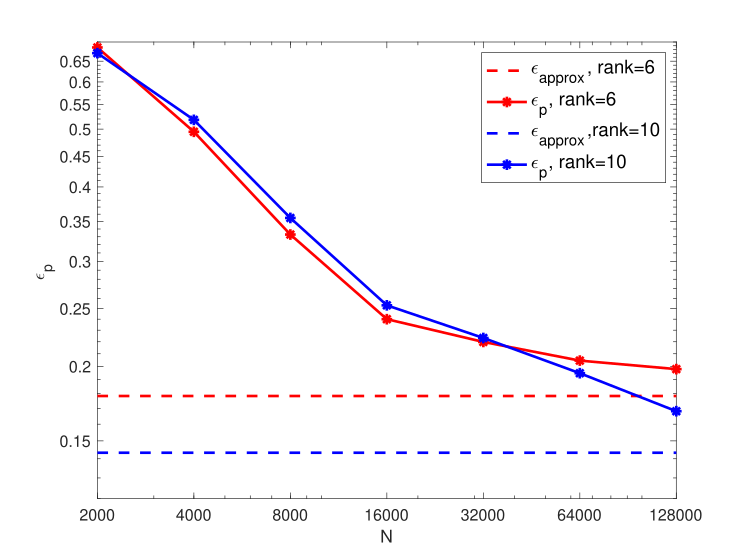} \quad
    \caption{Change of error with respect to number of samples for two choices of rank in top level in $4\times 4$ ferromagnetic Ising model with $\beta=0.2$. The benchmark line given by relative approximation error is shown as red and blue dashed line for rank $6$ and $10$, respectively. The convergence curves are shown as red and blue star lines for rank $6$ and $10$, respectively. }\label{fig:2d_d4_beta02_rank_N}
\end{figure}

\section{Conclusion}\label{sec:conclusion}

We present an optimization-free method for constructing a hierarchical tensor-network that approximates high-dimensional probability density using empirical distributions. Our approach involves sketching techniques from randomized linear algebra to form the tensor-network, with a computational complexity of $O(Nd\log d)$. We further demonstrate the success of the algorithm in several numerical examples concerning Ising-like models. 

\paragraph{Acknowledgment}
Y. Khoo acknowledges partial supports of NSF DMS-2339439, DOE DE-SC0022232, DARPA The Right
Space HR0011-25-9-0031, and a Sloan research fellowship.

\appendix
\section{Preliminaries of Theorem~\ref{thm:main}}\label{sec:Preliminaries}
We first provide a few useful lemmas and corollaries for conducting the analysis.
\begin{lemma}\label{lemma:wedin}(Theorem 4.1 in \cite{wedin1973perturbation})
    Suppose two matrices $A,B$ have the same rank $r(A)=r(B)$, then   
   $$ \|A^\dagger - B^\dagger\|_2\leq 3\|A^\dagger\|_2\|B^\dagger\|_2\|A-B\|_2$$
\end{lemma}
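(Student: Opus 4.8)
The plan is to prove the bound through an explicit algebraic decomposition of $A^\dagger-B^\dagger$ into three pieces, each of which I can control by $\|A^\dagger\|_2\|B^\dagger\|_2\|A-B\|_2$; the rank hypothesis enters precisely in bounding two of those pieces. First I would record the identity
\begin{equation*}
A^\dagger - B^\dagger = A^\dagger(B-A)B^\dagger + A^\dagger(I - BB^\dagger) - (I - A^\dagger A)B^\dagger,
\end{equation*}
which follows by expanding the right-hand side and cancelling the repeated terms $A^\dagger BB^\dagger$ and $A^\dagger A B^\dagger$. Here the middle term features the column-space projector $P_B=BB^\dagger$ and the last term the row-space projector $Q_A=A^\dagger A$. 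The whole argument is then just triangle inequality plus three term-by-term estimates.

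The first term is immediate: by submultiplicativity of the spectral norm, $\|A^\dagger(B-A)B^\dagger\|_2\le \|A^\dagger\|_2\|A-B\|_2\|B^\dagger\|_2$. For the second term I would use $A^\dagger = A^\dagger P_A$ with $P_A=AA^\dagger$, so that $A^\dagger(I-P_B)=A^\dagger P_A(I-P_B)$. The key step is the equal-rank identity for orthogonal projectors: since $\rank(A)=\rank(B)$ forces $\rank(P_A)=\rank(P_B)$, one has $\|P_A(I-P_B)\|_2=\|(I-P_A)P_B\|_2$. I then rewrite $(I-P_A)P_B=(I-P_A)BB^\dagger=(I-P_A)(B-A)B^\dagger$, where the last equality uses $(I-P_A)A=A-AA^\dagger A=0$. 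Because $\|I-P_A\|_2\le 1$, this yields $\|A^\dagger(I-BB^\dagger)\|_2\le \|A^\dagger\|_2\|A-B\|_2\|B^\dagger\|_2$.

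The third term is handled symmetrically on the row-space side: writing $B^\dagger=Q_B B^\dagger$ with $Q_B=B^\dagger B$, I get $(I-Q_A)B^\dagger=(I-Q_A)Q_B B^\dagger$, and the equal-rank projector identity gives $\|(I-Q_A)Q_B\|_2=\|Q_A(I-Q_B)\|_2$. Using $B(I-Q_B)=B-BB^\dagger B=0$ I can write $Q_A(I-Q_B)=A^\dagger A(I-Q_B)=A^\dagger(A-B)(I-Q_B)$, so that $\|(I-A^\dagger A)B^\dagger\|_2\le \|A^\dagger\|_2\|A-B\|_2\|B^\dagger\|_2$. Summing the three estimates via the triangle inequality produces the factor $3$ and the claimed inequality.

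The main obstacle is the equal-rank projector identity $\|P(I-Q)\|_2=\|(I-P)Q\|_2$ for orthogonal projectors $P,Q$ of equal rank; this is exactly where $\rank(A)=\rank(B)$ is consumed, and without it one only recovers the weaker asymmetric bound involving $\|A^\dagger\|_2^2$ and $\|B^\dagger\|_2^2$. I would establish this identity from the CS decomposition of the pair $(P,Q)$, equivalently from the fact that the nonzero singular values of $P(I-Q)$ and of $(I-P)Q$ both equal the sines of the principal angles between the two equidimensional subspaces $\Ran(P)$ and $\Ran(Q)$, and hence coincide only when $\dim\Ran(P)=\dim\Ran(Q)$.
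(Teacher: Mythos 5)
The paper never proves this lemma at all: it is imported verbatim as Theorem 4.1 of Wedin's 1973 paper, so there is no internal proof to compare against, and your proposal has to be judged as a self-contained reconstruction of the cited classical result. Judged that way, it is correct, and it is in fact essentially Wedin's own argument. Your algebraic identity checks out: expanding $A^\dagger(B-A)B^\dagger + A^\dagger(I-BB^\dagger) - (I-A^\dagger A)B^\dagger$, the cross terms $A^\dagger BB^\dagger$ and $A^\dagger AB^\dagger$ cancel in pairs, leaving exactly $A^\dagger - B^\dagger$. The first term is handled by submultiplicativity; for the other two, the reductions $A^\dagger = A^\dagger (AA^\dagger)$, $B^\dagger = (B^\dagger B)B^\dagger$, $(I-AA^\dagger)A=0$ and $B(I-B^\dagger B)=0$ are all valid Moore--Penrose identities, and the equal-rank projector fact $\|P(I-Q)\|_2 = \|(I-P)Q\|_2$ (for orthogonal projectors with $\rank P = \rank Q$, which holds here since $\rank(AA^\dagger)=\rank(A^\dagger A)=\rank A$ and likewise for $B$) is precisely where the hypothesis $\rank(A)=\rank(B)$ is consumed; your CS-decomposition justification of that fact, via the nonzero singular values of both products being the sines of the principal angles between the two equidimensional ranges, is the standard one. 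Two minor remarks: first, the phrase ``coincide \emph{only} when $\dim\Ran(P)=\dim\Ran(Q)$'' overstates what you need and what is true --- equal dimension \emph{implies} equality of the two norms, but the converse can hold accidentally; second, this triangle-inequality argument yields the constant $3$ for any unitarily invariant norm, whereas the sharp constant for the spectral norm is known to be $(1+\sqrt{5})/2$ --- irrelevant here, since $3$ is what the paper states and uses downstream in Lemma~\ref{lemma:errorG}.
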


\begin{cor}(Corollary to Matrix Bernstein inequality, cf. Corollary 6.2.1 in \cite{tropp2015introduction})\label{cor:bernstein} Let $A^* \in \mathbb{R}^{m_1\times m_2}$ be a matrix, and let $\{A^{\{j\}}\in \mathbb{R}^{m_1\times m_2}\}_{j=1}^N$ be a sequence of i.i.d. matrices with $\mathbb{E}[A^{\{j\}}]=A^*$. Denote
$\hat{A}=\frac{1}{N}\sum_{j=1}^N A^{\{j\}}$. Suppose there exists a constant $M$ such that $\|A^{\{j\}}\|_2\leq M$ for every $j$, then the following probability inequality holds for any $t>0$:
$$\mathbb{P}[\|\hat{A}-A^*\|_2\geq t]\leq (m_1+m_2)\exp(\frac{-Nt^2/2}{M^2+2Mt/3})$$
\end{cor}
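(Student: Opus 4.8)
The plan is to obtain this statement as a direct specialization of the general matrix Bernstein inequality for sums of independent, centered random matrices (Theorem 6.1.1 in \cite{tropp2015introduction}), applied to the rescaled centered summands $Z_j := \frac{1}{N}(A^{\{j\}} - A^*)$. First I would write $\hat{A} - A^* = \sum_{j=1}^N Z_j$ and note that the $Z_j$ are independent, since the $A^{\{j\}}$ are, and mean-zero, since $\mathbb{E}[A^{\{j\}}] = A^*$. This places us exactly in the setting where Bernstein's bound reads $\mathbb{P}[\|\sum_j Z_j\|_2 \ge t] \le (m_1 + m_2)\exp\left(\frac{-t^2/2}{v + Lt/3}\right)$, with $L$ a uniform almost-sure bound on $\|Z_j\|_2$ and $v = \max\{\|\sum_j \mathbb{E}[Z_j Z_j^T]\|_2, \|\sum_j \mathbb{E}[Z_j^T Z_j]\|_2\}$ the matrix variance statistic; the prefactor $(m_1 + m_2)$ is the one appropriate for rectangular $m_1 \times m_2$ matrices.

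The two remaining ingredients are the estimates of $L$ and $v$. For the uniform bound, Jensen's inequality gives $\|A^*\|_2 = \|\mathbb{E}[A^{\{j\}}]\|_2 \le \mathbb{E}\|A^{\{j\}}\|_2 \le M$, so $\|Z_j\|_2 \le \frac{1}{N}(\|A^{\{j\}}\|_2 + \|A^*\|_2) \le \frac{2M}{N} =: L$. For the variance, using that the $A^{\{j\}}$ are i.i.d.\ I would reduce to one term, $\|\sum_j \mathbb{E}[Z_j Z_j^T]\|_2 = \frac{1}{N}\|\mathbb{E}[(A^{\{1\}} - A^*)(A^{\{1\}} - A^*)^T]\|_2$, and then use $\mathbb{E}[(A^{\{1\}} - A^*)(A^{\{1\}} - A^*)^T] = \mathbb{E}[A^{\{1\}}(A^{\{1\}})^T] - A^*(A^*)^T \preceq \mathbb{E}[A^{\{1\}}(A^{\{1\}})^T]$ together with $\|\mathbb{E}[A^{\{1\}}(A^{\{1\}})^T]\|_2 \le \mathbb{E}\|A^{\{1\}}\|_2^2 \le M^2$ to conclude $v \le \frac{M^2}{N}$; the column-side term $\|\sum_j \mathbb{E}[Z_j^T Z_j]\|_2$ is bounded identically.

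Finally I would substitute $L = \frac{2M}{N}$ and $v = \frac{M^2}{N}$ into the exponent and clear the common factor $\frac{1}{N}$ from the denominator: $v + Lt/3 = \frac{1}{N}\left(M^2 + \frac{2Mt}{3}\right)$, so the exponent equals $\frac{-t^2/2}{\frac{1}{N}(M^2 + 2Mt/3)} = \frac{-Nt^2/2}{M^2 + 2Mt/3}$, which is precisely the claimed tail bound. I do not expect a genuine obstacle here, since all the probabilistic content is supplied by the cited inequality; the only thing to watch is the bookkeeping of constants. In particular, the crude centered bound $\|Z_j\|_2 \le 2M/N$ is what produces the $2M$ appearing in the $2Mt/3$ term, while the dominated-variance estimate $\mathbb{E}[A^{\{1\}}(A^{\{1\}})^T] \preceq M^2 I$ is what yields the $M^2$ term; keeping $A^*(A^*)^T \succeq 0$ (rather than discarding it loosely) is the one inequality that must go in the correct direction.
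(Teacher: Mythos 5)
Your proposal is correct. Note that the paper does not actually prove this statement: it is invoked as a known result, citing Corollary 6.2.1 of Tropp's monograph, whose own proof in that reference is precisely your argument --- specialize the rectangular matrix Bernstein inequality (Theorem 6.1.1) to the centered, rescaled summands $Z_j = \frac{1}{N}(A^{\{j\}}-A^*)$, bound $\|Z_j\|_2 \le 2M/N$ via the triangle inequality and Jensen, and dominate the variance statistic using $\mathbb{E}[(A^{\{1\}}-A^*)(A^{\{1\}}-A^*)^T] \preceq \mathbb{E}[A^{\{1\}}(A^{\{1\}})^T]$. The one point where you go slightly beyond a verbatim citation is also handled correctly: Tropp's corollary has the per-sample second moment $m_2(X)=\max\{\|\mathbb{E}[XX^T]\|_2,\|\mathbb{E}[X^TX]\|_2\}$ in the denominator, whereas the paper's version uses the larger quantity $M^2 \ge m_2(X)$; this substitution is legitimate exactly because the tail bound is monotone increasing in the variance parameter, which your derivation exploits (implicitly) when you insert the upper bounds $v \le M^2/N$ and $L \le 2M/N$ into the exponent. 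So your write-up is a complete, self-contained proof of the statement as the paper uses it.
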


\begin{cor}\label{cor:tensor_norm}
Assume $D\in \bbR^{r_1\times r_2 \times r_3}$ is a three-dimensional tensor and $A\in \bbR^{r_1 \times n_1}, B\in \bbR^{r_2\times n_2}, C\in \bbR^{n_1\times n_2\times r_3}$ satisfies $D(:,:,\gamma)=AC(:,:,\gamma)B^T$ for $\gamma\in [r_3]$. Then the following upper bound holds
$$\|D\|_F \leq \|A\|_2\|B\|_2\|C\|_F$$
\end{cor}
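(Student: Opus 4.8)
The plan is to prove the tensor-norm bound $\|D\|_F \leq \|A\|_2\|B\|_2\|C\|_F$ by reducing the three-dimensional tensor relation to a collection of matrix relations, one for each fixed value of the third index $\gamma \in [r_3]$. The starting observation is that squaring the Frobenius norm decouples the slices: $\|D\|_F^2 = \sum_{\gamma \in [r_3]} \|D(:,:,\gamma)\|_F^2$, and similarly $\|C\|_F^2 = \sum_{\gamma \in [r_3]} \|C(:,:,\gamma)\|_F^2$. So it suffices to control each matrix slice $D(:,:,\gamma) = A\, C(:,:,\gamma)\, B^T$ and then sum.

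First I would establish the single-slice matrix inequality $\|A M B^T\|_F \leq \|A\|_2 \|B\|_2 \|M\|_F$ for an arbitrary matrix $M = C(:,:,\gamma)$ of compatible dimensions. This is the standard submultiplicativity of the spectral norm against the Frobenius norm: applying $\|XY\|_F \leq \|X\|_2 \|Y\|_F$ twice (once on the left with $X = A$, and once using $\|Y\|_F = \|Y^T\|_F$ to handle the right multiplication by $B^T$, since $\|M B^T\|_F = \|B M^T\|_F \leq \|B\|_2 \|M^T\|_F = \|B\|_2\|M\|_F$). Chaining these gives $\|A C(:,:,\gamma) B^T\|_F \leq \|A\|_2 \|B\|_2 \|C(:,:,\gamma)\|_F$ for each $\gamma$.

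Then I would assemble the slices. Squaring the per-slice bound and summing over $\gamma$ yields
\begin{equation*}
\|D\|_F^2 = \sum_{\gamma} \|D(:,:,\gamma)\|_F^2 \leq \|A\|_2^2 \|B\|_2^2 \sum_{\gamma} \|C(:,:,\gamma)\|_F^2 = \|A\|_2^2 \|B\|_2^2 \|C\|_F^2,
\end{equation*}
and taking square roots gives the claimed inequality. The only genuine ingredient is the elementary fact $\|XY\|_F \leq \|X\|_2\|Y\|_F$, which follows from writing $\|XY\|_F^2 = \sum_j \|X y_j\|_2^2 \leq \|X\|_2^2 \sum_j \|y_j\|_2^2 = \|X\|_2^2\|Y\|_F^2$ over the columns $y_j$ of $Y$.

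There is no real obstacle here; this is a routine estimate. If I had to name the one point requiring a moment of care, it is the bookkeeping of transposes when handling the right factor $B^T$ — one must use $\|M B^T\|_F = \|B M^T\|_F$ to reuse the same column-wise spectral-norm bound rather than inventing a separate argument. Everything else is summation over the decoupled third index and a final square root.
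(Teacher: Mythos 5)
Your proof is correct and follows essentially the same route as the paper's: a per-slice bound $\|A\,C(:,:,\gamma)\,B^T\|_F \leq \|A\|_2\|B\|_2\|C(:,:,\gamma)\|_F$ obtained via the transpose trick $\|MB^T\|_F = \|BM^T\|_F$, followed by summing the squared slice norms over $\gamma$. The only difference is that you also spell out the elementary fact $\|XY\|_F \leq \|X\|_2\|Y\|_F$ column-by-column, which the paper uses implicitly.
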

\begin{proof}
For each $\gamma$, $\|D(:,:,\gamma)\|_F = \|AC(:,:,\gamma)B^T\|_F \leq \|A\|_2\|C(:,:,\gamma)B^T\|_F=\|A\|_2\|BC(:,:,\gamma)^T\|_F \leq \|A\|_2\|B\|_2\|C(:,:,\gamma)^T\|_F=\|A\|_2\|B\|_2\|C(:,:,\gamma)\|_F$. Then the following inequality holds,
$$\|D\|_F^2=\sum_{\gamma}\|D(:,:,\gamma)\|^2_F
\leq \sum_{\gamma}\|A\|^2_2\|B\|^2_2\|C(:,:,\gamma)\|^2_F=\|A\|^2_2\|B\|^2_2\|C\|^2_F.$$
Therefore, $\|D\|_F \leq \|A\|_2\|B\|_2\|C\|_F$.
\end{proof}

\section{Proof of Theorem~\ref{thm:main}}\label{sec:mainproof}
We first summarize the organization of this subsection. First, we  study the perturbation error of tensor core $\|\hat G^\bl_k - G^{\bl*}_k\|_F$ in each level. Then based on the hierarchical structure of the tensor-network, we could recursively bound the error $\|\tilde p^\blm_k -  p^{\blm*}_k\|_F$ based on lower level error $\|\tilde p^\bl_k -  p^{\bl*}_k\|_F$ in the hierarchical tensor-network structure, and thus bound the final error $\|\tilde{p}-p^*\|_F$ in the end. We split the whole proof into three lemmas. \par 

\begin{lemma}\label{lemma:errorG} 
$\|\hat{G}_k^{(l-1)} - G_k^{(l-1)*}\|_F \leq  \kappa_{G}\epsilon_A$ for $k\in[2^l],l\in[L]$ where $\kappa_{G}=c^2_{\hat{A}^\dagger}   + 6c_{A^\dagger}c^2_{\hat{A}^\dagger}c_p + 6c^2_{A^\dagger}c_{\hat{A}^\dagger}c_p.$
\end{lemma}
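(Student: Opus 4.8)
The plan is to read the solution formula \eqref{eq:solveG} as a slice-wise (in $\tilde\gamma_k^{(l-1)}$) product of three factors and to run a telescoping perturbation expansion. Abbreviate $P=\left(\mathcal{P}_{r^{(l)}}(A^{(l)}_{2k-1})\right)^\dagger$, $Q=\left(\mathcal{P}_{r^{(l)}}(A^{(l)}_{2k})^T\right)^{\dagger}$ and $B=B_k^{(l-1)}$, with starred and hatted versions denoting what is produced by feeding $p^*$ and $\hat p$ to $\mathcal{A}$. The grouping that matches the three summands of $\kappa_G$ is
\begin{equation*}
\hat G - G^* = \hat P(\hat B - B^*)\hat Q + (\hat P - P^*)B^*\hat Q + P^*B^*(\hat Q - Q^*),
\end{equation*}
and I would bound each term by the triangle inequality together with Corollary~\ref{cor:tensor_norm}, which turns every slice-wise sandwich $M_1\,T\,M_2$ into $\|M_1\|_2\|M_2\|_2\|T\|_F$. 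The placement of $\hat B-B^*$ between the two \emph{hatted} pseudo-inverses is what produces the $c_{\hat{A}^\dagger}^2$ term.

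The ingredients I would assemble are: (i) a pseudo-inverse perturbation bound $\|\hat P - P^*\|_2 \le 6\,c_{A^\dagger}c_{\hat{A}^\dagger}\,\epsilon_A$, and identically for $Q$; (ii) the elementary norm bounds $\|\hat P\|_2,\|\hat Q\|_2 \le c_{\hat{A}^\dagger}$ and $\|P^*\|_2 \le c_{A^\dagger}$, which follow from \eqref{eq:constant def} once we use $\mathcal{P}_{r^{(l)}}(A_k^{(l)*}) = A_k^{(l)*}$ from Proposition~\ref{prop:trim_rankr}; and (iii) the bounds $\|B^*\|_F \le c_p$ and $\|\hat B - B^*\|_F \le \epsilon_A$. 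For the first half of (iii) I would apply Corollary~\ref{cor:tensor_norm} to $B^*(:,:,\gamma) = S_{2k-1}^{(l)T}\,p_k^{(l-1)*}(:,:,\gamma)\,S_{2k}^{(l)}$ with orthonormal sketches ($\|S\|_2=1$) and $\|p_k^{(l-1)*}\|_F \le c_p$. The second half relies on recognizing that, after reshaping its first two indices, $B_k^{(l-1)}$ is itself a two-sided moment sketch $\left(S_{2k-1}^{(l)}\otimes S_{2k}^{(l)}\right)^T p(\mathcal{I}_k^{(l-1)};\mathcal{J}_k^{(l-1)})\,T_k^{(l-1)}$ of exactly the same form as $A$ in \eqref{eq:formulaA}, so the same empirical concentration that defines $\epsilon_A$ governs its fluctuation.

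For ingredient (i) — the step I expect to be the main obstacle — the difficulty is controlling the pseudo-inverse difference when the two matrices are only close, not equal, since pseudo-inversion is discontinuous across rank changes. Here I would invoke Lemma~\ref{lemma:wedin} (Wedin), whose equal-rank hypothesis $\mathrm{rank}\left(\mathcal{P}_{r^{(l)}}(\hat A)\right)=\mathrm{rank}\left(\mathcal{P}_{r^{(l)}}(A^*)\right)=r^{(l)}$ is guaranteed precisely by the truncation $\mathcal{P}_{r^{(l)}}$, to get $\|\hat P - P^*\|_2 \le 3\,c_{\hat{A}^\dagger}c_{A^\dagger}\,\|\mathcal{P}_{r^{(l)}}(\hat A) - \mathcal{P}_{r^{(l)}}(A^*)\|_2$. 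It then remains to show $\|\mathcal{P}_{r^{(l)}}(\hat A) - \mathcal{P}_{r^{(l)}}(A^*)\|_2 \le 2\epsilon_A$: using $\mathcal{P}_{r^{(l)}}(A^*) = A^*$ (Proposition~\ref{prop:trim_rankr}) and the triangle inequality, this reduces to bounding $\|\mathcal{P}_{r^{(l)}}(\hat A) - \hat A\|_2 = \sigma_{r^{(l)}+1}(\hat A)$, which by Weyl's inequality and $\sigma_{r^{(l)}+1}(A^*)=0$ is at most $\|\hat A - A^*\|_2 \le \epsilon_A$.

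Substituting (i)–(iii) into the three telescoped terms then gives the three products $c_{\hat{A}^\dagger}^2\epsilon_A$, $6\,c_{A^\dagger}c_{\hat{A}^\dagger}^2 c_p\,\epsilon_A$, and $6\,c_{A^\dagger}^2 c_{\hat{A}^\dagger} c_p\,\epsilon_A$, whose sum is exactly $\|\hat G_k^{(l-1)} - G_k^{(l-1)*}\|_F \le \left(c_{\hat{A}^\dagger}^2 + 6\,c_{A^\dagger}c_{\hat{A}^\dagger}^2 c_p + 6\,c_{A^\dagger}^2 c_{\hat{A}^\dagger} c_p\right)\epsilon_A = \kappa_G\epsilon_A$, completing the argument.
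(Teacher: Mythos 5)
Your skeleton is the paper's skeleton. The three-term telescoping of $\hat{G}_k^{(l-1)}-G_k^{(l-1)*}$ is the same up to an immaterial choice of which middle factors carry hats (the paper groups as $\hat{P}(\hat{B}-B^*)\hat{Q}+\hat{P}B^*(\hat{Q}-Q^*)+(\hat{P}-P^*)B^*Q^*$, you as $\hat{P}(\hat{B}-B^*)\hat{Q}+(\hat{P}-P^*)B^*\hat{Q}+P^*B^*(\hat{Q}-Q^*)$; both yield the three coefficients of $\kappa_G$). Your treatment of the pseudo-inverses is identical to the paper's: Wedin (Lemma~\ref{lemma:wedin}) applied to the pair $A_i^{(l)*}$ and $\mathcal{P}_{r^{(l)}}(\hat{A}_i^{(l)})$, combined with $\mathcal{P}_{r^{(l)}}(A_i^{(l)*})=A_i^{(l)*}$ from Proposition~\ref{prop:trim_rankr} and the bound $\|\hat{A}_i^{(l)}-\mathcal{P}_{r^{(l)}}(\hat{A}_i^{(l)})\|_2\leq\|\hat{A}_i^{(l)}-A_i^{(l)*}\|_2$; your Weyl-inequality phrasing of that last bound is equivalent to the paper's best-rank-$r^{(l)}$-approximation argument, and both give $6c_{A^\dagger}c_{\hat{A}^\dagger}\epsilon_A$.

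The one place you genuinely deviate is ingredient (iii), the claim $\|\hat{B}_k^{(l-1)}-B_k^{(l-1)*}\|_F\leq\epsilon_A$, and as written this step has a gap. The lemma is a deterministic inequality in the realized quantity $\epsilon_A=\max_{k,l}\|\hat{A}_k^{(l)}-A_k^{(l)*}\|_F$. Your observation that the reshaped $B_k^{(l-1)}$ is a moment sketch of the same form as \eqref{eq:formulaA}, with left sketch $S_{2k-1}^{(l)}\otimes S_{2k}^{(l)}$, shows only that $\|\hat{B}_k^{(l-1)}-B_k^{(l-1)*}\|_F$ concentrates at the same $O(1/\sqrt{N})$ rate; it does not bound it by $\epsilon_A$, since $S_{2k-1}^{(l)}\otimes S_{2k}^{(l)}$ is a different matrix from every $S_k^{(l)}$, and there is no deterministic comparison between a sketch of $\hat{p}-p^*$ against this Kronecker matrix and the maximum of the sketches defining $\epsilon_A$. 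The paper closes this step deterministically: it writes $\hat{B}_k^{(l-1)}-B_k^{(l-1)*}=S_{2k-1}^{(l)T}\left(\hat{p}_k^{(l-1)}-p_k^{(l-1)*}\right)S_{2k}^{(l)}$ and then chains $\|\hat{p}_k^{(l-1)}-p_k^{(l-1)*}\|_F\leq\|\left(S_k^{(l-1)T}\right)^\dagger\|_2\,\|\hat{A}_k^{(l-1)}-A_k^{(l-1)*}\|_F\leq\epsilon_A$, i.e.\ it inverts the parent-level sketch $S_k^{(l-1)}$ (using orthonormality, so $c_S=c_{S^\dagger}=1$) to tie the $B$-fluctuation to the $A$-fluctuation at level $l-1$. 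Your route is repairable: introduce $\epsilon_B=\max_{k,l}\|\hat{B}_k^{(l)}-B_k^{(l)*}\|_F$, prove the lemma with $\max(\epsilon_A,\epsilon_B)$ in place of $\epsilon_A$, and extend the Bernstein and union-bound argument of Lemma~\ref{lemma:epsilon_A} to the $B$'s (which your Kronecker-sketch observation makes routine); this recovers the final theorem up to constants, but it proves a modified lemma, not the one stated.
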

\begin{proof}
Based on \eqref{eq:solveG}, $G_k^{(l-1)*}$ for $l=1,\cdots,L$ takes the form 
\begin{equation}\label{eq:Gproof}
    G_k^{(l-1)*} = \left(\ccP_{r^{(l)}}(A^{(l)*}_{2k-1})\right)^\dagger B_k^{(l-1)*}\left(\ccP_{r^{(l)}}(A_{2k}^{(l)*})^T\right)^\dagger = \left(A^{(l)*}_{2k-1}\right)^\dagger B_k^{(l-1)*}\left(A_{2k}^{(l)*T}\right)^\dagger, k\in [2^{l-1}].
\end{equation}
The second equality is due to Proposition~\ref{prop:trim_rankr}. Similarly, we have a corresponding representation $\hat{G}_k^{(l-1)}$ for empirical distribution $\hat{p}$:
\begin{equation}\label{eq:tildeGproof}
    \hat{G}_k^{(l-1)} = \left(\ccP_{r^{(l)}}(\hat{A}^{(l)}_{2k-1})\right)^\dagger \hat{B}_k^{(l-1)}\left(\ccP_{r^{(l)}}(\hat{A}_{2k}^{(l)})^T\right)^\dagger, k\in [2^{l-1}].
\end{equation}
To analyze the error of $\hat{G}_k^{(l-1)}$, we need to analyze the error of $\left(\ccP_{r^{(l)}}(\hat A^{(l)}_{2k-1})\right)^\dagger,\left(\ccP_{r^{(l)}}(\hat A^{(l)}_{2k})\right)^\dagger$ and $\hat B_k^{(l-1)}$, respectively. Here we use Lemma \ref{lemma:wedin} for $\left(\ccP_{r^{(l)}}(\hat A^{(l)}_{i})\right)^\dagger$, ($i = 2k-1, 2k$) 
\begin{equation}\label{eq:Adagger bound}
\|\left(A^{(l)*}_i\right)^\dagger- \left(\ccP_{r^{(l)}}(\hat{A}^{(l)}_i)\right)^\dagger\|_2 \leq 
3\|\left(A^{(l)*}_i\right)^\dagger\|_2\|\left(\ccP_{r^{(l)}}(\hat{A}^{(l)}_i)\right)^\dagger\|_2 \|A^{(l)*}_i - \ccP_{r^{(l)}}(\hat{A}^{(l)}_i) \|_2,
\end{equation}
where 
\begin{align}
\|A^{(l)*}_i - \ccP_{r^{(l)}}(\hat{A}^{(l)}_i) \|_2 &=
\|A^{(l)*}_i - \hat{A}^{(l)}_i + \hat{A}^{(l)}_i - \ccP_{r^{(l)}}(\hat{A}^{(l)}_i) \|_2 \nonumber \\ 
&\leq \|A^{(l)*}_i - \hat{A}^{(l)}_i\|_2 + 
\|\hat{A}^{(l)}_i - \ccP_{r^{(l)}}(\hat{A}^{(l)}_i) \|_2 \leq 2\|A^{(l)*}_i - \hat{A}^{(l)}_i \|_2 \leq 2\epsilon_A.
\end{align}
The second inequality follows the fact that $\ccP_{r^{(l)}}(\hat{A}^{(l)}_i)$ is the best rank-$r^{(l)}$ approximation of $\hat{A}^{(l)}_i$ in terms of spectral norm and $\text{rank}(A^{(l)*}_i)=r^{(l)}$. Plugging this into \eqref{eq:Adagger bound}, we get 
\begin{equation}\label{eq:perturbA}
   \|\left(A^{(l)*}_i\right)^\dagger- \left(\ccP_{r^{(l)}}(\hat{A}^{(l)}_i)\right)^\dagger\|_2 \leq 6c_{A^\dagger}c_{\hat{A}^\dagger}\epsilon_A, 
\end{equation}
where we use the definition in \eqref{eq:constant def} for bounding $\|\left(A_i^{(l)*}\right)^\dagger\|_2$ and $\|\left(\ccP_{r^{(l)}}(\hat{A}^{(l)}_i)\right)^\dagger\|_2$.

Next, we upper bound the error of $\hat{B}_k^{(l-1)}$ (defined via \eqref{eq:formulaB}, as mentioned in the introduction of this section):
\begin{align}
    \|\hat{B}_k^{(l-1)}-B_k^{(l-1)*}\|_F  &= 
    \|S^{(l)T}_{2k-1}\hat{p}_k^{(l-1)}{S^{(l)}_{2k}} - S^{(l)T}_{2k-1}p_k^{(l-1)*}{S^{(l)}_{2k}}  \|_F  
    \leq c^2_S \|\hat{p}_k^{(l-1)}-p_k^{(l-1)*}\|_F 
    \nonumber \\
    &\leq c^2_S\|\left(S_k^{(l-1)T}\right)^\dagger\|_2 \|\hat{A}_k^{(l-1)}-A_k^{(l-1)*} \|_F
    \leq c^2_Sc_{S^\dagger}\epsilon_{A} = \epsilon_{A},  
\end{align}
where the first inequality is due to Corollary~\ref{cor:tensor_norm} and the second inequality is from \eqref{eq:formulaA}, the definition of $\hat{A}_k^{(l-1)}$ and $A_k^{(l-1)*}$. Here  $c_S=c_{S^\dagger}=1$ since $S_{2k-1}^{(l)},S_{2k}^{(l)},S_{k}^{(l-1)}$ are orthogonal matrices in the assumption of the theorem. 
Besides,
\begin{equation}\label{eq:boundB*}
    \|B_k^{(l-1)*}\|_F = \|S^{(l)T}_{2k-1}p_k^{(l-1)*}{S^{(l)}_{2k}}  \|_F \leq c^2_S \|p_k^{(l-1)*}\|_F \leq c_p,
\end{equation}
where the first inequality is from Corollary~\ref{cor:tensor_norm}. 
Therefore, we could have the following bound:
\begin{align} 
    \|\hat{G}_k^{(l-1)} &- G_k^{(l-1)*}\|_F   = 
    \|\left(\ccP_{r^{(l)}}(\hat{A}^{(l)}_{2k-1})\right)^{\dagger}\hat{B}_k^{(l-1)}\left(\ccP_{r^{(l)}}(\hat{A}^{(l)}_{2k})^T\right)^{\dagger} - \left(A_{2k-1}^{(l)*}\right)^{\dagger}B_k^{(l-1)*}\left({A_{2k}^{(l)*T}}\right)^{\dagger}\|_F \nonumber \\      
    & \leq  
    \|\left(\ccP_{r^{(l)}}(\hat{A}^{(l)}_{2k-1})\right)^{\dagger}\hat{B}_k^{(l-1)}\left(\ccP_{r^{(l)}}(\hat{A}^{(l)}_{2k})^T\right)^{\dagger}-\left(\ccP_{r^{(l)}}(\hat{A}^{(l)}_{2k-1})\right)^{\dagger}B_k^{(l-1)*}\left(\ccP_{r^{(l)}}(\hat{A}^{(l)}_{2k})^T\right)^{\dagger}\|_F \nonumber \\ 
    &+ 
    \|\left(\ccP_{r^{(l)}}(\hat{A}^{(l)}_{2k-1})\right)^{\dagger}B_k^{(l-1)*}\left(\ccP_{r^{(l)}}(\hat{A}^{(l)}_{2k})^T\right)^{\dagger}-\left(\ccP_{r^{(l)}}(\hat{A}^{(l)}_{2k-1})\right)^{\dagger}B_k^{(l-1)*}\left(A^{(l)*T}_{2k}\right)^{\dagger}\|_F \nonumber \\ 
    & + 
     \|\left(\ccP_{r^{(l)}}(\hat{A}^{(l)}_{2k-1})\right)^{\dagger}B_k^{(l-1)*}\left(A^{(l)*T}_{2k}\right)^{\dagger}-\left(A^{(l)*}_{2k-1}\right)^{\dagger}B_k^{(l-1)*}\left(A^{(l)*T}_{2k}\right)^{\dagger} \|_F \nonumber \\ 
    & \leq  c^2_{\hat{A}^\dagger}\|\hat{B}_k^{(l-1)} - B_k^{(l-1)*} \|_F   +  c_{\hat{A}^\dagger} c_p \|\left(\ccP_{r^{(l)}}(\hat{A}^{(l)}_{2k})^T\right)^{\dagger} - \left(A^{(l)*T}_{2k}\right)^{\dagger}  \|_2 \nonumber \\ 
    &+ c_{A^\dagger} c_p \|\left(\ccP_{r^{(l)}}(\hat{A}^{(l)}_{2k-1})\right)^{\dagger} -\left(A^{(l)*}_{2k-1}\right)^{\dagger}\|_2 \nonumber \\ 
    &   \leq  (c^2_{\hat{A}^\dagger}   + 6c_{A^\dagger}c^2_{\hat{A}^\dagger}c_p + 6c^2_{A^\dagger}c_{\hat{A}^\dagger}c_p)\epsilon_A,
\end{align}
where the first equality is from \eqref{eq:Gproof} and \eqref{eq:tildeGproof}, the second inequality is based on Corollary~\ref{cor:tensor_norm}, and the third inequality is from \eqref{eq:perturbA}. Besides,
\begin{equation}
    \|G_k^{(l-1)*}\|_F = \| (A_{2k-1}^{(l)*})^\dagger B_k^{(l-1)*}({A_{2k}^{(l)*T}})^\dagger \|_F \leq c^2_{A^\dagger}c_p,
\end{equation}
following Corollary~\ref{cor:tensor_norm} and \eqref{eq:boundB*}. 
\end{proof}

The next step is to analyze the error of the hierarchical tensor-network in estimating $p^*$.
\begin{lemma}\label{lemma:error_p}
If $c^2_{A^\dagger}c_pc_{\tilde{p}}+c^2_{A^\dagger}c^2_p>1$, then $\|\tilde{p}-p^*\|_F \leq 
(c^2_{A^\dagger}c_pc_{\tilde{p}}+c^2_{A^\dagger}c^2_p)^L\left(1+\frac{c^2_{\tilde{p}}\kappa_{G}}{c^2_{A^\dagger}c_pc_{\tilde{p}}+c^2_{A^\dagger}c^2_p-1} \right)\epsilon_A$ where $\kappa_G$ is defined in Lemma \ref{lemma:errorG}.  
\end{lemma}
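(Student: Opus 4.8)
The plan is to propagate the error from the leaves (level $L$) up to the root (level $0$) along the recursion \eqref{eq:recursion}, controlling the growth at each level. I would introduce the level-wise worst-case error $E^{(l)} := \max_{k\in[2^l]}\|\tilde{p}_k^{(l)} - p_k^{(l)*}\|_F$, so that the quantity of interest is $E^{(0)} = \|\tilde{p} - p^*\|_F$, and aim to establish the one-step recursion $E^{(l-1)} \le \mu\,E^{(l)} + c_{\tilde{p}}^2\kappa_G\epsilon_A$ with $\mu = c_{A^\dagger}^2 c_p c_{\tilde{p}} + c_{A^\dagger}^2 c_p^2$. Here I use that the true unfoldings satisfy $p_k^{(l-1)*} = p_{2k-1}^{(l)*}G_k^{(l-1)*}p_{2k}^{(l)*T}$ (the core-defining equation \eqref{eq:high d cdse} applied to $p=p^*$), while the estimate obeys $\tilde p_k^{(l-1)} = \tilde p_{2k-1}^{(l)}\hat G_k^{(l-1)}\tilde p_{2k}^{(l)T}$.

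The heart of the argument is a carefully chosen telescoping split of one recursion step, verified by direct cancellation:
\begin{align*}
\tilde p_k^{(l-1)} - p_k^{(l-1)*} &= (\tilde p_{2k-1}^{(l)} - p_{2k-1}^{(l)*})\,G_k^{(l-1)*}\,\tilde p_{2k}^{(l)T} + p_{2k-1}^{(l)*}\,G_k^{(l-1)*}\,(\tilde p_{2k}^{(l)} - p_{2k}^{(l)*})^T \\
&\quad + \tilde p_{2k-1}^{(l)}\,(\hat G_k^{(l-1)} - G_k^{(l-1)*})\,\tilde p_{2k}^{(l)T}.
\end{align*}
The decisive feature of this grouping is that the two child-error terms retain the \emph{exact} core $G_k^{(l-1)*}$, whose Frobenius norm Lemma~\ref{lemma:errorG} bounds by $c_{A^\dagger}^2 c_p$, while the core-error term keeps only the empirical flanks $\tilde p$. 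Applying Corollary~\ref{cor:tensor_norm} to each term (the middle tensor in Frobenius norm, the two flanking matrices in spectral norm, then passing to $\|\cdot\|_F$ via $\|\cdot\|_2 \le \|\cdot\|_F$ to invoke $c_{\tilde{p}}$, $c_p$, and the error factors) yields
\begin{align*}
\|\tilde p_k^{(l-1)} - p_k^{(l-1)*}\|_F \le c_{A^\dagger}^2 c_p c_{\tilde{p}}\,\|\tilde p_{2k-1}^{(l)} - p_{2k-1}^{(l)*}\|_F + c_{A^\dagger}^2 c_p^2\,\|\tilde p_{2k}^{(l)} - p_{2k}^{(l)*}\|_F + c_{\tilde{p}}^2\kappa_G\epsilon_A,
\end{align*}
where the last term uses $\|\hat G_k^{(l-1)} - G_k^{(l-1)*}\|_F \le \kappa_G\epsilon_A$ from Lemma~\ref{lemma:errorG}. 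Taking the maximum over $k$ and collapsing the two child coefficients into $\mu$ gives $E^{(l-1)} \le \mu\,E^{(l)} + c_{\tilde{p}}^2\kappa_G\epsilon_A$.

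It remains to supply a base case and unroll. At the leaves, $\tilde p_k^{(L)} = \hat p(\ccI_k^{(L)},:)T_k^{(L)}$ coincides with the sketched empirical unfolding $\hat p_k^{(L)}$, so $\|\tilde p_k^{(L)} - p_k^{(L)*}\|_F = \|\hat p_k^{(L)} - p_k^{(L)*}\|_F \le c_{S^\dagger}\epsilon_A = \epsilon_A$, using the orthogonality of the sketches ($c_{S^\dagger}=1$) exactly as in the proof of Lemma~\ref{lemma:errorG}; hence $E^{(L)} \le \epsilon_A$. Iterating the one-step bound from $l=L$ down to $l=0$ produces a geometric series $E^{(0)} \le \mu^L\epsilon_A + c_{\tilde{p}}^2\kappa_G\epsilon_A\sum_{j=0}^{L-1}\mu^j$, and invoking the hypothesis $\mu>1$ to write $\sum_{j=0}^{L-1}\mu^j = \tfrac{\mu^L-1}{\mu-1}\le\tfrac{\mu^L}{\mu-1}$ and factoring out $\mu^L$ gives precisely the claimed bound $\mu^L\bigl(1+\tfrac{c_{\tilde{p}}^2\kappa_G}{\mu-1}\bigr)\epsilon_A$.

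I expect the main obstacle to be the bookkeeping in the telescoping step: one must assign the exact factors ($G^*$, $p^*$) to the two child-error terms and the empirical factors ($\hat G$, $\tilde p$) to the core-error term so that the three resulting coefficients are \emph{exactly} $c_{A^\dagger}^2 c_p c_{\tilde{p}}$, $c_{A^\dagger}^2 c_p^2$, and $c_{\tilde{p}}^2\kappa_G$. A different grouping—for instance keeping $\hat G_k^{(l-1)}$ inside a child-error term—would force an extra $\|\hat G_k^{(l-1)}\|_F$ factor with no clean constant and spoil the multiplier $\mu$. Everything else (the spectral-versus-Frobenius inequalities, the reduction to $E^{(l)}$, and the geometric sum under $\mu>1$) is routine once the decomposition is fixed.
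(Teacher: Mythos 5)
Your proposal is correct and follows essentially the same route as the paper: the identical three-term telescoping of $\tilde p_{2k-1}^{(l)}\hat G_k^{(l-1)}\tilde p_{2k}^{(l)T} - p_{2k-1}^{(l)*}G_k^{(l-1)*}p_{2k}^{(l)*T}$ (exact core $G^*$ with the child errors, empirical flanks with the core error), the same constants $c_{A^\dagger}^2c_pc_{\tilde p}$, $c_{A^\dagger}^2c_p^2$, $c_{\tilde p}^2\kappa_G$ via Corollary~\ref{cor:tensor_norm} and the bound $\|G_k^{(l-1)*}\|_F\le c_{A^\dagger}^2c_p$ from Lemma~\ref{lemma:errorG}, and the same base case $E^{(L)}\le\epsilon_A$. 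The only cosmetic difference is that you unroll the recursion as an explicit geometric sum, while the paper shifts by the fixed point $\frac{c_{\tilde p}^2\kappa_G}{\mu-1}$ and iterates the resulting homogeneous relation; these are equivalent.
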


\begin{proof}
We want to prove that error $\|\tilde{p}_k^{(l)}-p_k^{(l)*}\|_F$ has the following form: 
\begin{equation}\label{eq:induction}
\|\tilde{p}_k^{(l)}-p_k^{(l)*}\|_F \leq \kappa_{p^\bl}\epsilon_A,
\end{equation}
for $l=0,\cdots,L$ with some $\kappa_{p^\bl}$. Applying this to $\tilde{p}_1^{(0)}=\tilde{p},p_1^{(0)*}=p^*$ we can obtain the desired conclusion. We obtain the form of $\kappa_{p^\bl}$ in \eqref{eq:induction} by induction. \par 
At $L$-th level, we have $\tilde{p}_k^{(L)}(\ccI_k^{(L)},:)=\hat{p}(\ccI_k^{(L)},:)T_k^{(L)}$ for $k\in[2^L]$ from \eqref{eq:recursion}. Therefore,  
\begin{equation}
    \|\tilde{p}_k^{(L)}-p_k^{(L)*}\|_F
    =\|\hat{p}(\ccI_k^{(L)},:)T_k^{(L)}-p^*(\ccI_k^{(L)},:)T_k^{(L)}\|_F \leq \|\left(S_k^{(L)}\right)^\dagger\|_2 \|\hat{A}_k^{(L)} - A_k^{(L)*} \|_F 
    \leq c_{S^\dagger}\epsilon_A  =\epsilon_A,
\end{equation}
where the first inequality is due to \eqref{eq:formulaA}, and the definitions of $\hat A_k^{(l)}$ and  $A_k^{(l)*}$ are given in the introduction of this section. Thus, $\|\tilde{p}_k^{(L)}-p_k^{(L)*}\|_F \leq \kappa_{p^{(L)}}\epsilon_A$ holds for any $k$ where $\kappa_{p^{(L)}}=1$.  \par 

Next, we derive the error \eqref{eq:induction} for $(l-1)$-th level, in terms of the the error of $l$-th level. Assuming $\|\tilde{p}_k^{(l)}-p_k^{(l)*}\|_F \leq \kappa_{p^\bl}\epsilon_A$ holds for $k\in[2^l]$. Based on \eqref{eq:recursion}, we have
\begin{equation}
    \tilde{p}_k^{(l-1)} = \tilde{p}_{2k-1}^{(l)}\hat{G}_k^{(l-1)}\tilde{p}_{2k}^{(l)T}, 
    p_k^{(l-1)*} = p_{2k-1}^{(l)*}G_k^{(l-1)*} p_{2k}^{(l)*T},k\in [2^{l-1}],
\end{equation} 
and
\begin{align}
  \|\tilde{p}_k^{(l-1)} - p_k^{(l-1)*}\|_F &= \|\tilde{p}^{(l)}_{2k-1}\hat{G}_k^{(l-1)}\tilde{p}^{(l)T}_{2k} - p^{(l)*}_{2k-1}G_k^{(l-1)*}p^{(l)*T}_{2k} \|_F \nonumber \\
  &\leq \|\tilde{p}^{(l)}_{2k-1}\hat{G}_k^{(l-1)}\tilde{p}^{(l)T}_{2k} - \tilde{p}^{(l)}_{2k-1}G_k^{(l-1)*}\tilde{p}^{(l)T}_{2k}\|_F \nonumber \\
  &+ \|\tilde{p}^{(l)}_{2k-1}G_k^{(l-1)*}\tilde{p}^{(l)T}_{2k} - p^{(l)*}_{2k-1}G_k^{(l-1)*}\tilde{p}^{(l)T}_{2k}\|_F \nonumber \\ 
  &+ \|p^{(l)*}_{2k-1}G_k^{(l-1)*}\tilde{p}^{(l)T}_{2k} - p^{(l)*}_{2k-1}G_k^{(l-1)*}p^{(l)*T}_{2k} \|_F  \nonumber \\
  &\leq c^2_{\tilde{p}}\|\hat{G}_k^{(l-1)} - G_k^{(l-1)*} \|_F + c^2_{A^\dagger}c_pc_{\tilde{p}}\|\tilde{p}^{(l)}_{2k-1}- p^{(l)*}_{2k-1}\|_F + c^2_{A^\dagger}c^2_p\|\tilde{p}^{(l)}_{2k}-p^{(l)*}_{2k}\|_F  \nonumber\\
  &\leq 
  (c^2_{\tilde{p}}\kappa_{G} + c^2_{A^\dagger}c_pc_{\tilde{p}}\kappa_{p^{(l)}}+c^2_{A^\dagger}c^2_p\kappa_{p^{(l)}})\epsilon_A, k\in[2^{l-1}], 
\end{align}
where the second inequality is from Corollary \ref{cor:tensor_norm} and the third inequality is from Lemma \ref{lemma:errorG}.
Therefore, $\|\tilde{p}_k^{(l-1)}-p_k^{(l-1)*}\|_F \leq \kappa_{p^{(l-1)}}\epsilon_A$ holds for  $\kappa_{p^{(l-1)}}=c^2_{\tilde{p}}\kappa_{G} + (c^2_{A^\dagger}c_pc_{\tilde{p}}+c^2_{A^\dagger}c^2_p)\kappa_{p^{(l)}}$. By induction, this equality holds for every $l$ and we have 
\begin{equation}
    \kappa_{p^{(l-1)}}+\frac{c^2_{\tilde{p}}\kappa_{G}}{c^2_{A^\dagger}c_pc_{\tilde{p}}+c^2_{A^\dagger}c^2_p-1}= (c^2_{A^\dagger}c_pc_{\tilde{p}}+c^2_{A^\dagger}c^2_p)\left(\kappa_{p^{(l)}}+\frac{c^2_{\tilde{p}}\kappa_{G}}{c^2_{A^\dagger}c_pc_{\tilde{p}}+c^2_{A^\dagger}c^2_p-1} \right).
\end{equation}
Furthermore, 
\begin{equation}
    \kappa_{p^{(0)}}+\frac{c^2_{\tilde{p}}\kappa_{G}}{c^2_{A^\dagger}c_pc_{\tilde{p}}+c^2_{A^\dagger}c^2_p-1}= (c^2_{A^\dagger}c_pc_{\tilde{p}}+c^2_{A^\dagger}c^2_p)^L\left(\kappa_{p^{(L)}}+\frac{c^2_{\tilde{p}}\kappa_{G}}{c^2_{A^\dagger}c_pc_{\tilde{p}}+c^2_{A^\dagger}c^2_p-1} \right),
\end{equation}
with the assumption $c^2_{A^\dagger}c_pc_{\tilde{p}}+c^2_{A^\dagger}c^2_p>1$, we have
\begin{align}
    \kappa_{p^{(0)}}&\leq  (c^2_{A^\dagger}c_pc_{\tilde{p}}+c^2_{A^\dagger}c^2_p)^L\left(\kappa_{p^{(L)}}+\frac{c^2_{\tilde{p}}\kappa_{G}}{c^2_{A^\dagger}c_pc_{\tilde{p}}+c^2_{A^\dagger}c^2_p-1} \right) \nonumber \\
    &=(c^2_{A^\dagger}c_pc_{\tilde{p}}+c^2_{A^\dagger}c^2_p)^L\left(1+\frac{c^2_{\tilde{p}}\kappa_{G}}{c^2_{A^\dagger}c_pc_{\tilde{p}}+c^2_{A^\dagger}c^2_p-1} \right).    
\end{align}
In the end, we have 
\begin{equation}
\|\tilde{p}-p^*\|_F \leq \kappa_{p^{(0)}}\epsilon_A=
(c^2_{A^\dagger}c_pc_{\tilde{p}}+c^2_{A^\dagger}c^2_p)^{\log_2{d}}\left(1+\frac{c^2_{\tilde{p}}\kappa_{G}}{c^2_{A^\dagger}c_pc_{\tilde{p}}+c^2_{A^\dagger}c^2_p-1} \right)\epsilon_A
\end{equation}
with $L=\log_2{d}$ mentioned in the introduction of this section. 
\end{proof}
In the following lemma, we state what the upper-bound $\epsilon_A$ is, which completes the proof for Theorem~\ref{thm:main}.
\begin{lemma}\label{lemma:epsilon_A}
For $0<\delta<1$, the following inequality for $\epsilon_A$ holds with probability at least $1-\delta$:
\begin{equation}
\epsilon_A \leq  \frac{3\sqrt{\tilde{r}_{\texttt{max}}}\log(\frac{2\tilde{r}_{\texttt{max}}d\log_2{d}}{\delta})}{\sqrt{N}}
\end{equation}
\end{lemma}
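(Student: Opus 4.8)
The plan is to prove Lemma~\ref{lemma:epsilon_A} by applying the matrix Bernstein inequality (Corollary~\ref{cor:bernstein}) to each sketched moment matrix $\hat A_k^\bl$ individually, taking a union bound over all such matrices, and converting the resulting spectral-norm control into the Frobenius norm at the very end. Throughout I would use that $m_1=m_2=\tilde r^\bl\le \tilde r_{\texttt{max}}$ for every $\hat A_k^\bl\in\bbRm{\tilde r^\bl}{\tilde r^\bl}$, and that the total number of matrices $A_k^\bl$ is at most $d\log_2 d$ (this upper-bounds the true count $2d-2$ and matches the logarithmic factor in the target bound).

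First I would exhibit $\hat A_k^\bl$ as an empirical mean of i.i.d.\ rank-one matrices. Writing $a=\vy^i_{\ccC_k^\bl}$ and $b=\vy^i_{[d]\setminus\ccC_k^\bl}$ for the $i$-th sample and expanding the definition \eqref{eq:formulaA} with $\hat p=\frac1N\sum_i\delta_{\vy^i}$ gives
$$\hat A_k^\bl=\frac1N\sum_{i=1}^N A^{\{i\}},\qquad A^{\{i\}}:=S_k^\bl(\vy^i_{\ccC_k^\bl},:)^{T}\, T_k^\bl(\vy^i_{[d]\setminus\ccC_k^\bl},:),$$
each summand being the outer product of one row of $S_k^\bl$ with one row of $T_k^\bl$. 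Taking the expectation over $\vy^i\sim p^*$ reproduces exactly the moment matrix of \eqref{eq:formulaA}, so $\bbE[A^{\{i\}}]=A_k^{\bl*}$, and Corollary~\ref{cor:bernstein} applies with $m_1=m_2=\tilde r^\bl$.

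Next I would supply the uniform bound $M$ on the summands. Since $A^{\{i\}}$ is rank one, $\|A^{\{i\}}\|_2=\|S_k^\bl(\vy^i_{\ccC_k^\bl},:)\|_2\,\|T_k^\bl(\vy^i_{[d]\setminus\ccC_k^\bl},:)\|_2$. The orthogonality hypothesis of Theorem~\ref{thm:main} says each sketch matrix has orthonormal columns, so $S_k^\bl\,(S_k^\bl)^T$ is an orthogonal projection; its diagonal entries, which are precisely the squared row norms $\|S_k^\bl(a,:)\|_2^2$, therefore lie in $[0,1]$, and likewise for $T_k^\bl$. Hence $\|A^{\{i\}}\|_2\le 1$ and we may take $M=1$. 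This projection-diagonal observation is the one genuinely non-mechanical step: column-orthonormality does \emph{not} bound individual rows of a general matrix, and it is exactly this structure that prevents $M$ from growing with $\tilde r_{\texttt{max}}$. I regard this and the calibration below as the main obstacles.

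Finally I would calibrate the deviation $t$ and union bound. With $M=1$, Corollary~\ref{cor:bernstein} gives $\bbP[\|\hat A_k^\bl-A_k^{\bl*}\|_2\ge t]\le 2\tilde r_{\texttt{max}}\exp(-\tfrac{Nt^2/2}{1+2t/3})$; unioning over the at most $d\log_2 d$ matrices yields a failure probability at most $2\tilde r_{\texttt{max}}\,d\log_2 d\cdot\exp(-\tfrac{Nt^2/2}{1+2t/3})$. Setting $t=\tfrac{3}{\sqrt N}\,L_0$ with $L_0:=\log\!\big(\tfrac{2\tilde r_{\texttt{max}}d\log_2 d}{\delta}\big)$, a short computation shows $\tfrac{Nt^2/2}{1+2t/3}=\tfrac{9L_0^2/2}{1+2L_0/\sqrt N}\ge L_0$, since this reduces to $L_0(\tfrac92-\tfrac2{\sqrt N})\ge 1$, which holds because $L_0\ge\log 2$ and $N\ge 1$. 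Thus the union-bound probability is at most $\delta$, and on the complementary event $\|\hat A_k^\bl-A_k^{\bl*}\|_2\le t$ for all $k,l$. Converting via $\|\cdot\|_F\le\sqrt{\mathrm{rank}}\,\|\cdot\|_2\le\sqrt{\tilde r_{\texttt{max}}}\,\|\cdot\|_2$ then gives $\epsilon_A\le\sqrt{\tilde r_{\texttt{max}}}\,t=\tfrac{3\sqrt{\tilde r_{\texttt{max}}}}{\sqrt N}L_0$, which is precisely the claimed inequality.
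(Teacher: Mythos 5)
Your proof is correct and takes essentially the same route as the paper's: you write $\hat A_k^\bl$ as an empirical mean of i.i.d.\ rank-one matrices with spectral norm at most $1$, apply the matrix Bernstein corollary, union bound over the at most $d\log_2 d$ matrices, and convert to the Frobenius norm via the factor $\sqrt{\tilde r_{\texttt{max}}}$. The only differences are cosmetic: you choose $t=3L_0/\sqrt{N}$ and verify the exponent bound directly rather than solving the quadratic for $t$ as the paper does, and your projection-diagonal justification that rows of a column-orthonormal sketch have norm at most $1$ spells out a step the paper merely asserts.
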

\begin{proof}
We first consider $\|\hat{A}_k^\bl - A_k^{(l)*}\|_F$ for each $k,l$. This can be bounded via spectral norm:
$\|\hat{A}_k^\bl - A_k^{(l)*}\|_F\leq \sqrt{\tilde{r}^\bl}\|\hat{A}_k^\bl - A_k^{(l)*}\|_2$. The error in spectral norm can be further bounded via matrix Bernstein inequality in Corollary \ref{cor:bernstein}. \par 

More specifically, by definition $\hat{A}_k^\bl$ takes the form
\begin{equation}
    \hat{A}_k^\bl =S_k^{(l)T}(\ccI_k^\bl,:)\hat{p}(\ccI_k^\bl;\ccJ_k^\bl)T_k^{(l)}(\ccJ_k^\bl,:) = \frac{1}{N}\sum_{j=1}^N S_k^{(l)}(\vy_{C_k^\bl}^{j},:)^T T_k^\bl(\vy_{C_k^\bl}^{j},:),
\end{equation}
where $\vy^j$'s are i.i.d. samples from $p^*$. The spectral norm of each term in the summation $\|S_k^{(l)}(\vy_{C_k^\bl}^{j},:)^T T_k^\bl(\vy_{C_k^\bl}^{j},:)\|_2$ is bounded by $1$ since $S_k^\bl$ and $T_k^\bl$ are orthogonal matrices. Moreover, $\mathbb{E}[S_k^{(l)}(\vy_{C_k^\bl}^{j},:)^T T_k^\bl(\vy_{C_k^\bl}^{j},:)]=A_k^{(l)*}$ since $\mathbb{E}[\hat{p}]=p^*$. Therefore, we could use matrix Bernstein inequality in Corollary \ref{cor:bernstein} and obtain
\begin{equation}
\mathbb{P}[\|\hat{A}_k^\bl-A_k^{(l)*}\|_2\geq t]\leq 2\tilde{r}^\bl \exp(\frac{-Nt^2/2}{1+2t/3})
\end{equation} 
for any $t>0$. We set $\tilde{\delta}=2\tilde{r}^\bl \exp(\frac{-Nt^2/2}{1+2t/3})$ and suppose $N$ is sufficient large such that $0<\tilde{\delta}<1$. Then $\|\hat{A}_k^\bl-A_k^{(l)*}\|_2< t$ 
holds with probability at least $1-\tilde{\delta}$. We now get $t$ in terms of $\tilde{\delta}$:
\begin{align}\label{eq:def t}
    t &= \frac{-\frac{2}{3}\log(\frac{\tilde{\delta}}{2\tilde{r}^\bl})+ \sqrt{\frac{4}{9}\log^2(\frac{\tilde{\delta}}{2\tilde{r}^\bl}) - 2N\log(\frac{\tilde{\delta}}{2\tilde{r}^\bl})} }{N}= \frac{\frac{2}{3}\log(\frac{2\tilde{r}^\bl}{\tilde{\delta}})+ \sqrt{\frac{4}{9}\log^2(\frac{2\tilde{r}^\bl}{\tilde{\delta}}) + 2N\log(\frac{2\tilde{r}^\bl}{\tilde{\delta}}) }}{N}\nonumber\\ 
    &\leq \frac{\frac{2}{3}\log(\frac{2\tilde{r}^\bl}{\tilde{\delta}})+ \frac{2}{3}\log(\frac{2\tilde{r}^\bl}{\tilde{\delta}}) + \sqrt{2N\log(\frac{2\tilde{r}^\bl}{\tilde{\delta}}) }}{N}  \leq \frac{(\frac{4}{3}+\sqrt{2})\log(\frac{2\tilde{r}^\bl}{\tilde{\delta}})}{\sqrt{N}}\leq \frac{3\log(\frac{2\tilde{r}^\bl}{\tilde{\delta}})}{\sqrt{N}}, \nonumber\\
\end{align}
where the first inequality follows $\sqrt{a+b}\leq \sqrt{a}+\sqrt{b}$ for $a,b>0$ and the second inequality follows the fact that $N>1$ and $\log(\frac{2\tilde{r}^\bl}{\tilde{\delta}})>1$ from $0<\tilde{\delta}<1$. 

Therefore, using such a $t$ in \eqref{eq:def t}, we can have the following bound: 
\begin{equation}
    \|\hat{A}_k^\bl - A_k^{(l)*}\|_F\leq \sqrt{\tilde{r}^\bl}\|\hat{A}_k^\bl - A_k^{(l)*}\|_2 \leq \frac{3\sqrt{\tilde{r}^\bl}\log(\frac{2\tilde{r}^\bl}{\tilde{\delta}})}{\sqrt{N}} \leq \frac{3\sqrt{\tilde{r}_{\texttt{max}}}\log(\frac{2\tilde{r}_{\texttt{max}}}{\tilde{\delta}})}{\sqrt{N}},
\end{equation}
which holds with probability at least $1-\tilde{\delta}$ (here we recall notation $\tilde{r}_{\texttt{max}}=\max_{l}\tilde{r}^{(l)}$). 

 At this point we have a bound for $\|\hat{A}_k^\bl - A_k^{(l)*}\|_F$ for a specific $k,l$. However, we need a uniform bound over all possible choice of $k,l$ where $k\in[2^l],l\in[L]$. There are at most $d\log_2{d}$ pairs of $k,l$, then 
\begin{align}
&\mathbb{P}\left[\max_{k,l}\|\hat{A}_k^\bl - A_k^{(l)*}\|_F \leq \frac{3\sqrt{\tilde{r}_{\texttt{max}}}\log(\frac{2\tilde{r}_{\texttt{max}}}{\tilde{\delta}})}{\sqrt{N}}\right] = 1- \mathbb{P}\left[\max_{k,l}\|\hat{A}_k^\bl - A_k^{(l)*}\|_F> \frac{3\sqrt{\tilde{r}_{\texttt{max}}}\log(\frac{2\tilde{r}_{\texttt{max}}}{\tilde{\delta}})}{\sqrt{N}}\right] \nonumber\\ 
&\geq 1 - \mathbb{P}\left[\bigcup_{k,l}\left\{\|\hat{A}_k^\bl - A_k^{(l)*}\|_F>\frac{3\sqrt{\tilde{r}_{\texttt{max}}}\log(\frac{2\tilde{r}_{\texttt{max}}}{\tilde{\delta}})}{\sqrt{N}}\right\}\right] \nonumber\\ 
&\geq 1 - \sum_{k,l}\mathbb{P}\left[\|\hat{A}_k^\bl - A_k^{(l)*}\|_F>\frac{3\sqrt{\tilde{r}_{\texttt{max}}}\log(\frac{2\tilde{r}_{\texttt{max}}}{\tilde{\delta}})}{\sqrt{N}}\right] \geq 1-\tilde{\delta}d\log_2{d}. 
\end{align}
Let $\delta=\tilde{\delta}d\log_2{d}$ and suppose $N$ is sufficient large such that $0<\delta<1$. Thus, the following inequality for $\epsilon_A=\max_{k,l}\|\hat{A}_k^\bl - A_k^{(l)*}\|_F$ holds with probability at least $1-\delta$:
\begin{equation}
    \epsilon_A \leq  \frac{3\sqrt{\tilde{r}_{\texttt{max}}}\log(\frac{2\tilde{r}_{\texttt{max}}d\log_2{d}}{\delta})}{\sqrt{N}}
\end{equation}
\end{proof}

\bibliographystyle{elsarticle-num}
\bibliography{ref}
\end{document}